\documentclass[11pt,letterpaper,reqno]{amsart}
\usepackage[T1]{fontenc}
\usepackage{lmodern,amsmath,amssymb,amsthm,mathtools,mathrsfs}
\usepackage{microtype}
\usepackage{xcolor,booktabs,array}
\usepackage{etoolbox,needspace}
\usepackage[colorlinks=true,linkcolor=blue!50!black,citecolor=blue!50!black,urlcolor=blue!50!black]{hyperref}
\numberwithin{equation}{section}
\allowdisplaybreaks
\newtheorem{theorem}{Theorem}[section]
\newtheorem{proposition}[theorem]{Proposition}
\newtheorem{corollary}[theorem]{Corollary}
\newtheorem{lemma}[theorem]{Lemma}
\newtheoremstyle{mainresult}
  {0.75em}
  {0.75em}
  {\normalfont}
  {}
  {\bfseries}
  {.}
  {0.5em}
  {}
\theoremstyle{mainresult}
\newtheorem*{mainthmA}{Theorem A}
\newtheorem*{mainthmB}{Theorem B}
\newtheorem*{mainthmC}{Theorem C}
\theoremstyle{definition}
\newtheorem{definition}[theorem]{Definition}

\theoremstyle{remark}
\newtheorem{remark}[theorem]{Remark}
\newtheorem{example}[theorem]{Example}
\BeforeBeginEnvironment{mainthmA}{\Needspace{12\baselineskip}}
\BeforeBeginEnvironment{mainthmB}{\Needspace{10\baselineskip}}
\BeforeBeginEnvironment{mainthmC}{\Needspace{10\baselineskip}}
\BeforeBeginEnvironment{definition}{\Needspace{7\baselineskip}}
\BeforeBeginEnvironment{theorem}{\Needspace{8\baselineskip}}
\BeforeBeginEnvironment{proposition}{\Needspace{7\baselineskip}}
\BeforeBeginEnvironment{corollary}{\Needspace{7\baselineskip}}
\BeforeBeginEnvironment{lemma}{\Needspace{7\baselineskip}}
\BeforeBeginEnvironment{problem}{\Needspace{6\baselineskip}}
\BeforeBeginEnvironment{remark}{\Needspace{5\baselineskip}}
\BeforeBeginEnvironment{example}{\Needspace{5\baselineskip}}
\BeforeBeginEnvironment{proof}{\Needspace{4\baselineskip}}
\pretocmd{\subsection}{\Needspace{9\baselineskip}}{}{}
\newcommand{\R}{\mathbb R}
\newcommand{\C}{\mathbb C}
\newcommand{\Z}{\mathbb Z}
\newcommand{\T}{\mathbb T}
\newcommand{\Sphere}{\mathbb S}
\newcommand{\Vol}{\operatorname{Vol}}
\title[Multi-module minimal products]
{\small Multi-Module Minimal Products: 
Exact Composition, Closed Profiles, Gaussian Transfer}
\author{Yongsheng Zhang}
\date{September 21, 2026}
\subjclass[2020]{53A10, 53C42, 37J35}
\keywords{minimal submanifold, spherical minimal product, exact composition,
orthogonal multiplication, weighted profile, Gaussian transfer, self-shrinker,
period map, equivariant doubling}
\hypersetup{pdftitle={Multi-Module Minimal Products: Exact Composition, Closed Profiles, and Gaussian Transfer},pdfauthor={Yongsheng Zhang}}

\begin{document}
   \begin{abstract}
        We establish an exact composition principle for spherical immersions coupled through coefficient profiles and norm-preserving bilinear maps.
          Under an explicit mixed-orthogonality condition,
         the spherical mean-curvature vector splits into orthogonal input and profile contributions.
           The composition is minimal if and only if every input is minimal and the profile is $W$-minimal for an explicit monomial weight.

        Using Hsiang--Lawson reduction and Kapouleas--McGrath gluing,
           we construct closed embedded $W$-minimal profiles,
         including families with unbounded intermediate Betti numbers.
         For sufficiently large comparable input dimensions,
          a Gaussian transfer developed in this paper produces interior profiles from closed embedded Gaussian seeds that are nondegenerate modulo rotations.
\end{abstract}
\maketitle

{\ }

\section{Introduction}\label{intr}

        Minimal submanifolds in spheres form a central subject in geometry,
          linking classical differential geometry and geometric measure theory with group actions,
         nonlinear analysis,
           and spectral theory \cite{Sim68,Tak66,Law70,HL71,Pit81,MN14,KM23}.
        Methods of construction range from geometric reflection and symmetry reduction to min--max theory and analytic gluing.

        A standard way to combine two minimal immersions $f_i:M_i^{k_i}\to\Sphere^{n_i}$ is the constant minimal product
     $
     (x_1,x_2)\longmapsto
     \left(\sqrt{\frac{k_1}{k_1+k_2}}\,f_1(x_1),
     \sqrt{\frac{k_2}{k_1+k_2}}\,f_2(x_2)\right)
     $
        \cite{Xin03,TZ20,CH18}.
          Allowing the real coefficients to vary gives the spherical join.
         When complex coefficients enter,
           phase rotations become available.
        Related constructions in Lagrangian and Legendrian geometry appear in \cite{CU04,CLU06,HK12}.
          Li--Zhang \cite{LZ26} recently introduced spiral minimal products and developed a systematic analysis of the curve-profile case for minimal immersions satisfying $\langle df,f\,i\rangle=0$.
         We call this condition $\C$-horizontality
        (i.e., the spherical $\C$-totally reality in \cite{LZ26}).

        The spiral minimal products with curve profiles have been developed systematically,
          including the weighted-geodesic reduction,
          phase dynamics,
          closing,
          and embeddedness.
          Inspired by the structure of spiral minimal products,
          in this paper we study
         higher-dimensional coefficient profiles
           and different coefficient modules
           which may together open a new direction.
        The first question that comes to us is when the full minimality equation decomposes into independent input and profile components.

        All vector spaces below are real Euclidean spaces,
          and $\Sphere(E)$ denotes the unit sphere of $E$.

   \begin{definition}[Orthogonal coefficient module]\label{omod}
        A bilinear map $\mu:E\times C\to Z$ is an \emph{orthogonal multiplication} if
     $$
     |\mu(v,c)|=|v|\,|c|
     \qquad(v\in E,\ c\in C).
     $$
        We refer to $(E,C,Z;\mu)$ as an orthogonal coefficient module.
   \end{definition}

        Real,
          complex,
         and quaternionic scalar multiplication give standard modules with coefficient spheres $\Sphere^0$,
          $\Sphere^1$,
         and $\Sphere^3$.
        We also use the tensor map $(v,c)\mapsto v\otimes c$ and direct sums.

        Let $f_i:M_i^{k_i}\to\Sphere(E_i)$ be spherical immersions and let $\mu_i:E_i\times C_i\to Z_i$ be orthogonal multiplications.
          An immersed profile of dimension $r\geq1$
     $$
     \Psi=(c_1,\ldots,c_m):P^r\longrightarrow
     \Sphere\!\left(\bigoplus_iC_i\right),
     \qquad |c_i|>0,
     $$
        defines
     \begin{equation}\label{ansz}
       \begin{split}
         F:P\times\prod_iM_i&\longrightarrow
         \Sphere\!\left(\bigoplus_iZ_i\right),\\
         F(p,x_1,\ldots,x_m)
         &=\bigl(\mu_i(f_i(x_i),c_i(p))\bigr)_i,
       \end{split}
       \qquad
       W=\prod_i|c_i|^{k_i}.
     \end{equation}
        The factor $W$ is the volume contributed by the input fibers.
          For the metric to split,
         we require the profile and input directions to be orthogonal.

   \begin{definition}[Compatibility and admissibility]\label{adms}
        The pair $(f_i,c_i)$ is \emph{profile-compatible} if
     \begin{equation}\label{pmix}
       \langle\mu_i(df_i(X),c_i),\mu_i(f_i,dc_i(U))\rangle=0
       \qquad(X\in TM_i,\ U\in TP).
     \end{equation}
        An input $f:M\to\Sphere(E)$ is \emph{$\mu$-admissible} if
     \begin{equation}\label{orth}
       \langle\mu(df(X),a),\mu(f,b)\rangle=0
       \qquad(X\in TM,\ a,b\in C).
     \end{equation}
   \end{definition}

        Compatibility concerns one chosen profile.
          Admissibility depends only on the input and guarantees compatibility with every profile.
         Theorem A assumes compatibility.

        For complex scalar multiplication,
          admissibility is exactly $\C$-horizontality.
         An input is $\mathbb H$-horizontal if $\langle df,f\,\xi\rangle=0$ for $\xi=\mathbf i,\mathbf j,\mathbf k$.
           For quaternionic scalar multiplication this is precisely admissibility.
           In the following statements,
        mean curvature is the trace of the second fundamental form.

   \begin{mainthmA}[Exact composition principle]
        Assume that every $(f_i,c_i)$ is profile-compatible.
          Then
     \begin{equation}\label{inmt}
       g_F=g_\Psi\oplus\bigoplus_i|c_i|^2g_i,
       \qquad
       d\Vol_F=W\,d\Vol_\Psi\prod_i d\Vol_{g_i}.
     \end{equation}
        Moreover,
          $F$ is minimal in the unit sphere if and only if every $f_i$ is minimal and
     \begin{equation}\label{main}
       \mathbf H_\Psi=(\nabla^{\Sphere}\log W)^\perp.
     \end{equation}
        Equivalently,
          $\Psi$ is $W$-minimal,
         or minimal for $W^{2/r}g_{\Sphere}$.
           The input and profile contributions to the spherical mean-curvature vector are orthogonal,
        so they vanish separately.
   \end{mainthmA}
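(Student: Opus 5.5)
The plan is to compute the induced metric and the spherical mean-curvature vector of $F$ directly in the ambient Euclidean space $Z=\bigoplus_i Z_i$, and then split the result into an input part and a profile part.

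\textbf{Step 1: the metric.} The differential of $F$ along $TM_i$ is $\mu_i(df_i(X),c_i)$ in the $Z_i$ slot. Along $U\in TP$ it is $\bigl(\mu_i(f_i,dc_i(U))\bigr)_i$. The bilinear form $\langle\mu(v,c),\mu(w,c)\rangle=\langle v,w\rangle|c|^2$ follows from polarizing $|\mu(v,c)|=|v||c|$ in $v$, and the analogous identity holds in $c$. Hence
\[
\langle dF(X),dF(X')\rangle=|c_i|^2g_i(X,X').
\]
Different input factors sit in different $Z_i$ and are orthogonal. Also $\sum_i\langle \mu_i(f_i,dc_i U),\mu_i(f_i,dc_i V)\rangle=\sum_i\langle dc_iU,dc_iV\rangle=g_\Psi(U,V)$, since $|f_i|=1$. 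Mixed terms vanish exactly by \eqref{pmix}. This gives \eqref{inmt}, and the volume formula follows from taking the determinant.

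\textbf{Step 2: the Euclidean Laplacian.} I will use $\Delta_{g_F}F=\mathbf H^{\R}_F$ (Euclidean mean curvature) and $\mathbf H^{\Sphere}_F=\Delta_{g_F}F+nF$ with $n=r+\sum k_i$. For a warped metric $g_\Psi\oplus\bigoplus|c_i|^2g_i$ I expect
\[
\Delta_F=\Delta_\Psi+\langle\nabla\log W,\nabla\,\cdot\,\rangle_\Psi+\sum_i|c_i|^{-2}\Delta_{g_i}.
\]
This is because the volume density is $W$ times a product, and the fiber factors depend only on $p$. Applying it componentwise and using bilinearity gives
\[
(\Delta_FF)_i=\mu_i\bigl(f_i,\Delta_\Psi c_i+\langle\nabla\log W,\nabla c_i\rangle\bigr)+|c_i|^{-2}\mu_i(\Delta_{g_i}f_i,c_i).
\]
Here the cross term $2\sum\mu_i(df_i(e_a),dc_i(\epsilon_b))$ does not appear, because $\nabla_F$ of a product function has no mixed Hessian terms under the warped product structure. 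I need to check this carefully: the mixed Christoffel terms contribute only $\langle\nabla\log|c_i|^{k_i},\cdot\rangle$, which is accounted for by $W$.

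\textbf{Step 3: the splitting.} I will write $\Delta_{g_i}f_i=\mathbf H_{f_i}-k_if_i$ and $\Delta_\Psi\Psi=\mathbf H_\Psi-r\Psi$. Then $\mathbf H^{\Sphere}_F=A+B$, with input part
\[
A=\bigl(|c_i|^{-2}\mu_i(\mathbf H_{f_i},c_i)\bigr)_i
\]
and profile part
\[
B=\Bigl(\mu_i\bigl(f_i,(\mathbf H_\Psi+\nabla^{\Psi}\log W-\textstyle\sum_j k_j\ldots)_i\bigr)\Bigr)_i .
\]
The key observation is that $\Psi$ is the profile with the $-k_i f_i$ terms absorbed. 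We have $-k_i|c_i|^{-2}\mu_i(f_i,c_i)+k_i\,\mu_i(f_i,\ldots)$, and $\nabla^{\Sphere}\log W$ has $C_i$-component $k_i c_i/|c_i|^2-(\sum k_j)c_i$. So the $-k_i|c_i|^{-2}c_i$ terms combine with $\nabla\log W$ tangent to $P$ to give exactly $\mu_i(f_i,\,\cdot\,)$ applied to $\mathbf H_\Psi-(\nabla^{\Sphere}\log W)^\perp$, plus a radial term killed by the $+nF$. Thus
\[
B=\widehat\mu\bigl(f,\mathbf H_\Psi-(\nabla^{\Sphere}\log W)^\perp\bigr),\qquad \widehat\mu=(\mu_i)_i .
\]

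\textbf{Step 4: orthogonality and conclusion.} The vector $A$ has components $\mu_i(\mathbf H_{f_i},c_i)$, where $\mathbf H_{f_i}\perp f_i$ in $E_i$. The vector $B$ has components $\mu_i(f_i,b_i)$. Polarizing $|\mu(v,c)|=|v||c|$ in both slots gives
\[
\langle\mu(v,c),\mu(w,b)\rangle+\langle\mu(v,b),\mu(w,c)\rangle=2\langle v,w\rangle\langle c,b\rangle .
\]
This does not immediately give $\langle\mu(\mathbf H,c),\mu(f,b)\rangle=0$. This is the main obstacle, and compatibility alone only covers $df_i(X)$, not normal vectors. What I would try first is to show that $\mathbf H_{f_i}$ lies in the span of the second derivatives of $f_i$, and then differentiate \eqref{pmix} along $M_i$. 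This gives
\[
\langle\mu(\nabla df(X,Y),c),\mu(f,b)\rangle=-\langle\mu(df X,c),\mu(df Y,b)\rangle ,
\]
with $b=dc_i(U)$. Tracing over $X=Y$, the right side equals $-\langle c,b\rangle|df|^2$ by polarization, which vanishes because $\langle c_i,dc_i(U)\rangle$ is not zero in general. I will handle this by also using $\langle\mu(f,c),\mu(f,b)\rangle=\langle c,b\rangle$ together with the $-k_if_i$ term. Once $\langle A,B\rangle=0$ holds, the identity $|\widehat\mu(f,w)|=|w|$ and injectivity of $\mu_i(\cdot,c_i)$ show that $\mathbf H^{\Sphere}_F=0$ if and only if $A=0$ and $B=0$, that is, every $\mathbf H_{f_i}=0$ and \eqref{main} holds. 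The conformal reformulation with $W^{2/r}g_{\Sphere}$ is the standard first-variation computation for the weighted volume $\int W\,d\Vol_\Psi$.
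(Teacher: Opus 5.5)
Your Steps 1--3 are correct and match the paper's proof. The block-diagonal metric, the multiply-warped Laplacian $\Delta_F=\Delta_W+\sum_i\rho_i^{-2}\Delta_{g_i}$, and the identity $(\mathbf H^{\Sphere}_F)_i=\rho_i^{-2}\mu_i(\mathbf H_{f_i},c_i)+\mu_i(f_i,\mathcal A_i)$ all agree with it. Here $\mathcal A_i=\Delta_Wc_i+(n-k_i\rho_i^{-2})c_i$ is indeed the $i$th block of $\mathbf H_\Psi-(\nabla^{\Sphere}\log W)^\perp$, using $\nabla^{\Sphere}\log W=(k_i\rho_i^{-2}c_i)_i-K\Psi$ and $n=r+K$. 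This already gives the ``if'' direction.

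The gap is Step 4, which carries the ``only if'' direction and the orthogonality assertion of the statement. Differentiating \eqref{pmix} along $M_i$ and tracing yields only
\[
\langle\mu_i(\mathbf H_{f_i},c_i),\mu_i(f_i,b)\rangle=0\qquad(b\in dc_i(T_pP)).
\]
Incidentally, the trace term $k_i\langle c_i,dc_i(U)\rangle$ does not vanish; it cancels against the $-k_if_i$ part of $\Delta_{g_i}f_i$, which is what you end up using. The case $b=c_i$ is trivial by polarization. But the profile block you must pair against is $\mu_i(f_i,\mathcal A_i)$, and $\mathcal A_i$ contains the second derivatives $\Delta_Pc_i$. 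These in general do not lie in $\operatorname{span}\{c_i,dc_i(T_pP)\}$, a subspace of dimension at most $r+1$. The form $S_i(c,b)=\langle\mu_i(\mathbf H_{f_i},c),\mu_i(f_i,b)\rangle$ is skew in $(c,b)$ by the two-slot polarization you wrote down. It vanishes identically under admissibility, but under mere profile-compatibility your argument does not give $S_i(c_i,\mathcal A_i)=0$. So $\langle A,B\rangle=0$ is not established, and without it $A+B=0$ does not split into $A=0$ and $B=0$.

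The missing idea is a second differentiation, now along $P$. For fixed $x_i$, the one-form $\beta_i(U)=S_i(c_i,dc_i(U))$ on $P$ vanishes identically, so its weighted divergence $\operatorname{div}\beta_i+\beta_i(\nabla^P\log W)$ vanishes too. In a geodesic $g_\Psi$-orthonormal frame $(U_\alpha)$ this reads
\[
0=S_i(c_i,\Delta_Wc_i)+\sum_\alpha S_i\bigl(dc_i(U_\alpha),dc_i(U_\alpha)\bigr).
\]
Each term of the last sum equals $|dc_i(U_\alpha)|^2\langle\mathbf H_{f_i},f_i\rangle=0$ by polarization in the first slot (equivalently, by skewness of $S_i$). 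Hence $S_i(c_i,\Delta_Wc_i)=0$. Together with $S_i(c_i,c_i)=0$, this gives $S_i(c_i,\mathcal A_i)=0$, that is, $\langle A_i,B_i\rangle=0$ in every block. After that, your injectivity argument closes the proof exactly as in the paper.
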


       The composition theorem turns the multi-module construction into a systematic problem in weighted geometry on coefficient spheres. 
         Once compatibility is imposed, module algebra, input geometry, 
            and coefficient geometry form three distinct but coupled layers.
         The input dimensions determine the monomial weight $W$.
              Every $W$-minimal coefficient profile can then be coupled with compatible module maps and minimal spherical inputs to produce a minimal immersion into a round sphere. 
                Thus weighted minimal submanifolds of coefficient spheres become geometric building blocks for constructing new spherical minimal submanifolds.
    
               At fixed exponents,
          we obtain closed embedded profiles by two complementary routes.
        For quaternionic two-block modules,
          Hsiang--Lawson reduction gives a lower-dimensional weighted system,
         and a two-phase period map enforces closure.
        For complex multi-block modules,
          Kapouleas--McGrath doubling followed by phase expansion produces profiles with growing topology.

   \begin{mainthmB}[Closed profiles at fixed exponents]
        For each of the following families,
          the input dimensions,
         and hence the corresponding monomial weight,
          remain fixed.
      \begin{enumerate}
        \item[(i)]
          Let $k_1,k_2>0$,
           and let $p,q\in\mathbb Z\setminus\{0\}$ satisfy
         $
         \gcd(|p|,|q|)=1.
         $
          Set
         $$
         W_0(a,b)=|a|^{k_1}|b|^{k_2}
         \qquad
         \text{on }\Sphere^7\subset\mathbb H^2.
         $$
          There exists $N_0$ such that,
           for every integer $N\geq N_0$,
          $\Sphere^7$ contains a closed embedded $W_0$-minimal coefficient torus
           whose least radial closing order is exactly $N$
          (Theorem \ref{urnk}).

        \item[(ii)]
          Let $m\geq2$,
           and fix positive complex block dimensions
          $k_1,\ldots,k_m$.
          Set
         $$
         W(Z)=\prod_{i=1}^m |Z_i|^{k_i}
         \qquad
         \text{on }\Sphere^{2m-1}.
         $$
          There exists a sequence of closed embedded
           $m$-dimensional $W$-minimal profiles
          $Q_j\subset\Sphere^{2m-1}$
           such that
         $$
         b_\ell(Q_j)\longrightarrow\infty
         \qquad
         \text{for every }1\leq\ell\leq m-1
         $$
          (Theorem \ref{pdbl} and Corollaries \ref{dext} and \ref{pinf}).
      \end{enumerate}
   \end{mainthmB}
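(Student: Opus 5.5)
Theorem B is a summary statement. Its two parts are proved later as Theorem \ref{urnk} and as Theorem \ref{pdbl} together with Corollaries \ref{dext} and \ref{pinf}, so the plan is to organize those proofs. Both parts use Theorem A only to recast the problem: by \eqref{main} we must produce closed embedded submanifolds of the coefficient sphere that are minimal for the conformal metric $W^{2/r}g_{\Sphere}$. The weight is fixed throughout, so only the profile varies.

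\emph{Part (i).} The plan is a Hsiang--Lawson reduction.
\begin{enumerate}
  \item[(1)] Fix on $\Sphere^7\subset\mathbb H^2$ a symmetry group $G$ preserving $W_0(a,b)=|a|^{k_1}|b|^{k_2}$. We take a torus or $Sp(1)$-type subgroup acting on the phases of $a$ and $b$, chosen so that $G$-invariant profiles are tori determined by a curve $\gamma$ in the orbit space.
  \item[(2)] By the Hsiang--Lawson principle, $G$-invariant $W_0$-minimal profiles correspond to geodesics of the orbit space with the metric $(W_0\cdot V)^{2/\dim}g_{\mathrm{orb}}$, where $V$ is the orbit volume. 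Writing $a=|a|e^{\xi\theta_1}$, $b=|b|e^{\eta\theta_2}$ and $|a|=\cos t$, $|b|=\sin t$, this becomes a Hamiltonian system with two cyclic phase momenta.
  \item[(3)] The radial variable $t$ oscillates between two turning points inside $(0,\pi/2)$. This is where the weight keeps the orbit away from the degenerate set $\{ab=0\}$. Over one radial period the two phases advance by $\Theta_1(\lambda)$ and $\Theta_2(\lambda)$, where $\lambda$ is the momentum level.
  \item[(4)] Closure after $N$ radial periods requires $N\Theta_j\in 2\pi\Z$ with rotation numbers $p,q$. We use a one-parameter family of momenta and compute the asymptotics of the two-phase period map near the Clifford-type equilibrium $\tan^2t=k_2/k_1$. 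Near that equilibrium the period ratio tends to a linearization frequency, and the family sweeps an interval of ratios.
  \item[(5)] By the intermediate value theorem, for each large $N$ we obtain a solution with $\Theta_j=2\pi p_j/N$. The condition $\gcd(|p|,|q|)=1$ then yields least closing order exactly $N$.
  \item[(6)] Embeddedness follows because the radial curve is a simple oscillation and the phase lifts are monotone.
\end{enumerate}
The main obstacle is showing that the period map is monotone, or at least nondegenerate, in $\lambda$, so that the closing condition is satisfied exactly and transversally. I would first try an expansion near the equilibrium to second order.

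\emph{Part (ii).} The plan is a doubling construction.
\begin{enumerate}
  \item[(1)] Start from the weighted Clifford-type torus $T=\{|Z_i|^2=k_i/\sum_j k_j\}\subset\Sphere^{2m-1}$. It is $W$-minimal by the first-order condition on the radii, and it has dimension $m$.
  \item[(2)] Apply Kapouleas--McGrath doubling in the conformal metric. Place two perturbed copies of $T$ connected by catenoidal bridges at a $\Z_L^{m}$-symmetric lattice of points. Impose the symmetries so that the linearized operator, the weighted Jacobi operator, has only approximate kernel killed by symmetry or by balancing via the lattice parameters.
  \item[(3)] Solve the nonlinear problem by a fixed point argument, giving closed embedded $W$-minimal doublings $Q_j$ of $T$ as $L\to\infty$.
  \item[(4)] For the topology, the bridges add handles. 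Corollary \ref{dext} should use phase expansion (the $U(1)^m$-phase structure) to spread the bridges into families that raise each $b_\ell$, $1\le\ell\le m-1$. Corollary \ref{pinf} then counts via Mayer--Vietoris: the bridge count grows with $L$, hence $b_\ell\to\infty$.
\end{enumerate}
The hard step is invertibility of the weighted Jacobi operator on $T$ modulo symmetries. I would handle it by Fourier decomposition in the phases, because the weight depends only on the moduli $|Z_i|$.
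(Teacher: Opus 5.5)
Part (i) rests on a misreading of $p,q$. They are not rotation numbers. They are the fixed speeds of the free circle action $(a,b)\mapsto(ae^{\mathbf ip\vartheta},be^{\mathbf jq\vartheta})$. The profile is the circle saturation, hence a $2$-torus, of a closed geodesic of $h_{p,q}=A^2(ds^2+c^2d\varphi_1^2+d^2d\varphi_2^2)$ on the section $(\cos s\,e^{j\varphi_1},\sin s\,e^{i\varphi_2})$, where $A=c^{k_1}d^{k_2}L$ includes the orbit length $L=\sqrt{p^2c^2+q^2d^2}$. Coprimality only identifies the return lattice $\Lambda_{p,q}=2\pi\Z^2+\pi(p,q)\Z$, and ``least radial closing order'' means the least $n$ with $n\Pi\in\Lambda_{p,q}$. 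Your reading would also require increments $2\pi(p,q)/N\to(0,0)$, far from the regime where the paper can invert $\Pi$.

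Even after this correction, two steps fail.
\begin{itemize}
\item Closing is two equations in the two momenta $(\mu_1,\mu_2)$. A one-parameter family and the intermediate value theorem can fix the ratio $\Theta_1/\Theta_2$, but not both increments. You need $\Pi$ to be a local diffeomorphism. The paper gets this at an axis point $(\bar\mu_1,0)$ from two facts: the transverse derivative of $\Delta_2$ is positive, and $\Delta_1(\mu,0)$ is nonconstant. Nonconstancy comes from the distinct anchors $\pi/\alpha$ and $2\pi/(\cos s_0\sqrt\kappa)$ in Lemma \ref{uaxs}, i.e.\ the inequality $E<4\alpha^2$, which is where $L$ genuinely enters.
\item Near an interior equilibrium, both increments stay away from zero. (Such an equilibrium is a minimum of $U(\cdot;\mu_1,\mu_2)$, not the point $\tan^2t=k_2/k_1$.) So both phases wind on the order of $N$ times, and a simple radial oscillation with monotone phases does not rule out self-intersections.
\end{itemize}
The missing idea is to aim at targets $\eta^{(N)}=\bigl((2\pi m_N+\varepsilon_q\pi p)/N,\,2\pi/(r_qN)\bigr)$ that approach the axis, so that the character $\chi_q=e^{ir_q\varphi_2}$ has degree one along $\gamma_N$. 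Since $\dot\varphi_2>0$, this makes $\gamma_N$ embedded. It also gives least order exactly $N$ relative to $\Lambda_{p,q}$, because $j\eta^{(N)}\in\Lambda_{p,q}$ with $0<j<N$ would force winding $j/N$.

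Part (ii) fails at its central step for $m\geq3$. Kapouleas--McGrath doubling is a theorem about closed two-sided minimal surfaces in Riemannian three-manifolds. Your torus $T\subset\Sphere^{2m-1}$ is $m$-dimensional with codimension $m-1\geq2$, and there is no catenoidal-bridge doubling theory there to invoke. The torus is also misplaced: the full-phase orbit volume raises each exponent by one (Corollary \ref{satn}), so the critical torus is $|Z_i|^2=(k_i+1)/\sum_j(k_j+1)$.

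The missing idea is to double only a two-block seed. This is the critical torus $T_*$ in $(\Sphere^3,g_{a,b})$, with $g_{a,b}=\cos^{a-1}s\,\sin^{b-1}s\,g_{\Sphere^3}$, $a=k_1+1$, $b=k_2+1$. There (KM1)--(KM4) are checked on the quotient by $\mathcal R_{Q_1}\times\mathcal R_{Q_2}$ with $Q_2^2>2b$ and the reflection $\Phi\mapsto-\Phi$; your Fourier decomposition fits exactly here. A connectedness argument for the lift follows. The remaining $m-2$ blocks are then attached by the constant minimal product. The converse direction of Theorem A converts minimality of the full composition into $W$-minimality of the $m$-dimensional profile $\Sigma_j\times\T^{m-2}$ (or $B_j\times\T^{m-2}$ in Corollary \ref{pinf}).

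Unbounded intermediate Betti numbers then follow from K\"unneth, with Poincar\'e polynomial $(1+2g_jt+t^2)(1+t)^{m-2}$ and $g_j\to\infty$. They do not come from spreading bridges into families or from a Mayer--Vietoris count. For $m=2$ your plan coincides with the paper's, apart from the radii.
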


     Theorem B keeps the exponents fixed. Suppose instead that they tend to infinity in positive limiting proportions. The monomial density on
          $\Sphere^{m-1}_{++}=\{r\in\Sphere^{m-1}:r_i>0\}$
         concentrates at a unique balanced point. 
         After rescaling about this point by the inverse concentration scale, 
              the weighted variational problem converges to the Euclidean Gaussian geometry associated with Huisken’s self-shrinker functional \cite{Hui90,CM12,ALW14}.
        Its compact critical submanifolds are Gaussian-minimal and, after a fixed dilation, become self-shrinkers. We call them Gaussian seeds. Under the rotational nondegeneracy condition
         \eqref{gker}, 
         such seeds persist as weighted-minimal profiles on the coefficient spheres.

   \begin{mainthmC}[Gaussian transfer] 
        Let $m\geq3$ and assume that
     $$
       W_j(r)=\prod_i r_i^{\delta_i^{(j)}},
       \qquad
       D_j:=\sum_i\delta_i^{(j)}\longrightarrow\infty,
       \qquad
       \frac{\delta_i^{(j)}}{D_j}\longrightarrow\lambda_i>0.
     $$
        Every rotationally nondegenerate Gaussian seed
     $\Sigma^q\subset\R^{m-1}$,
        $1\leq q\leq m-2$,
          transfers for all sufficiently large $j$ to a closed embedded $W_j$-minimal profile
     $Q_{j,\Sigma}\cong\Sigma\Subset\Sphere^{m-1}_{++}$.
        After scaling by $\sqrt{D_j}$ about the balanced point and choosing a rotation,
these profiles converge smoothly to $\Sigma$.
   \end{mainthmC}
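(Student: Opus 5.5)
The plan is a rescaling-plus-implicit-function argument. First I fix the limiting geometry. On $\Sphere^{m-1}_{++}$ the weight $W_j=\prod_i r_i^{\delta_i^{(j)}}$ has $\log W_j=D_j\sum_i(\delta_i^{(j)}/D_j)\log r_i$. This is maximized at the balanced point $r^{(j)}_*$ with $r_{*,i}^{(j)}=\sqrt{\delta_i^{(j)}/D_j}$, which converges to $r_*=(\sqrt{\lambda_i})_i$. I will compute the Hessian of $\log W_j$ on the sphere at $r^{(j)}_*$ and show it equals $-D_j$ times a positive definite form on $T_{r_*}\Sphere^{m-1}\cong\R^{m-1}$. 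A direct computation with $\phi=\sum\lambda_i\log r_i$ restricted to the sphere should give exactly $-2|v|^2$; I expect the Lagrange multiplier term to make this isotropic, which matches the standard Gaussian $e^{-|x|^2/4}$ after normalization. If it is not isotropic, I will absorb the quadratic form by a fixed linear change of variables.

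Next I rescale. I take the exponential chart $x\mapsto \exp_{r^{(j)}_*}(x/\sqrt{D_j})$ and pull back both the metric and the density. Then $D_j g_{\Sphere}\to g_{\R^{m-1}}$ smoothly on compacts, and $\log W_j-\log W_j(r_*^{(j)})=-|x|^2/4+O(|x|^3/\sqrt{D_j})$ in $C^\infty_{loc}$. Theorem A's weighted mean curvature $\mathbf H-(\nabla\log W_j)^\perp$ (with $W$-minimality a conformal condition) rescales to an operator $\mathcal F_j$ on normal graphs over $\Sigma$. This operator converges in $C^{2,\alpha}\to C^{0,\alpha}$ operator norm to the Gaussian operator $\mathcal F_\infty(u)=\mathbf H-\tfrac12 x^\perp$ near $u=0$. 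Hence $\mathcal F_j(0)=O(D_j^{-1/2})$, and the linearizations converge to the drift Jacobi operator $L_\Sigma=\Delta^\perp+|A|^2$-type$+\tfrac12$ of the self-shrinker.

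The main obstacle is the kernel of $L_\Sigma$. Rotations of $\R^{m-1}$ always give Jacobi fields, and by the hypothesis \eqref{gker} these are the only ones. The perturbation $\mathcal F_j$ is not rotation invariant, so I will run a Lyapunov--Schmidt reduction. For each rotation $R\in SO(m-1)$ I solve $\mathcal F_j(R\Sigma,u)\in K_R$ with $u\perp K_R$, using the uniform invertibility of $L_\Sigma$ on $K^\perp$ and a contraction in $C^{2,\alpha}$. This yields a smooth reduced map $R\mapsto \Phi_j(R)\in K$.

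The key point is that this reduced map is the gradient of the reduced weighted area functional $R\mapsto \mathcal A_j(R\Sigma+u_j(R))$ on the compact group $SO(m-1)/\mathrm{Stab}$. It therefore has a critical point, for instance its maximum, and at that critical point $\mathcal F_j=0$ exactly. This is why I choose "choosing a rotation" rather than nondegeneracy of a limiting reduced function. The variational identity $\langle \mathcal F_j(R,u_j),\partial_R(\cdot)\rangle=d\mathcal A_j$, together with the nondegeneracy of the pairing between $K$ and the infinitesimal rotations, gives the equivalence of the two critical-point conditions for large $j$.

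Finally, the resulting graph is embedded because it is $C^{2,\alpha}$-close to the embedded compact $\Sigma$. It lies in a ball of radius $O(D_j^{-1/2})$ about $r_*^{(j)}$, hence in $\Sphere^{m-1}_{++}$, and it is diffeomorphic to $\Sigma$. Elliptic regularity upgrades the convergence to smooth. The condition $q\le m-2$ just ensures that $\Sigma$ has positive codimension, and $m\ge3$ that $\R^{m-1}$ admits such seeds. This gives $Q_{j,\Sigma}$ as claimed.
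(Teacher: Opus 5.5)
Your proposal follows essentially the paper's proof: a Gaussian blow-up at the balanced point (Proposition~\ref{glim}), a transverse implicit-function solve over the compact rotation orbit, and a critical point of the reduced weighted area on that orbit to remove the rotational component (Lemma~\ref{gmb} and Theorem~\ref{gtra}). One constant needs fixing: your own Hessian $-2D_j\,g$ with the chart $x\mapsto\exp_{p_j}(x/\sqrt{D_j})$ gives $\log w_j\to-|x|^2$, not $-|x|^2/4$, so the limit equation is $\mathbf H=-2x^\perp$ with zeroth-order Jacobi term $+2$ rather than $\tfrac12$ --- this is exactly the paper's normalization of Gaussian seeds, and it is what makes the $\sqrt{D_j}$-rescaled profiles converge to $\Sigma$ itself rather than to a dilate of it.
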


\section{The modular composition principle}
\label{fram}

        We use traced mean curvature,
          so that $\Delta f=\mathbf H_f^{\Sphere}-k f$ for a $k$-dimensional spherical immersion,
         and the induced Laplacian has nonpositive spectrum.
           We first convert admissibility into a pointwise algebraic condition.
        We then prove the exact composition principle and derive the tensor consequence used later.

   \begin{lemma}[Algebraic test for admissibility]\label{admtest}
        Let $\mu:E\times C\to Z$ be an orthogonal multiplication and $f:M\to\Sphere(E)$ a spherical immersion.
          Set $d=\dim C$,
         choose an orthonormal basis $e_1,\ldots,e_d$ of $C$,
           and set $A_\alpha v=\mu(v,e_\alpha)$.
        Then
     \begin{equation}\label{hurw}
       A_\alpha^*A_\beta+A_\beta^*A_\alpha
       =2\delta_{\alpha\beta}I.
     \end{equation}
        In particular,
          $J_{\alpha\beta}=A_\alpha^*A_\beta$ is skew-adjoint for $\alpha<\beta$,
         and $f$ is $\mu$-admissible if and only if
     \begin{equation}\label{jhor}
       \langle df(X),J_{\alpha\beta}f\rangle=0
       \qquad(X\in TM,\ \alpha<\beta).
     \end{equation}
   \end{lemma}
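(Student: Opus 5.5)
The plan has two steps: first derive the Hurwitz-type identity \eqref{hurw} by polarizing the norm condition, then use bilinearity to reduce admissibility \eqref{orth} to basis pairs and express those pairings through $J_{\alpha\beta}$.

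\emph{The identity \eqref{hurw}.} For fixed $c\in C$, the map $v\mapsto\mu(v,c)$ is linear, so Definition \ref{omod} gives $|\mu(v,c)|^2=|v|^2|c|^2$. Take $c=\sum_\alpha t_\alpha e_\alpha$. Bilinearity gives $\mu(v,c)=\sum_\alpha t_\alpha A_\alpha v$, hence
$$\sum_{\alpha,\beta}t_\alpha t_\beta\langle A_\alpha^*A_\beta v,v\rangle=|v|^2\sum_\alpha t_\alpha^2\qquad\text{for all }t\in\R^d.$$
Comparing coefficients of the quadratic form in $t$ yields $\langle (A_\alpha^*A_\beta+A_\beta^*A_\alpha)v,v\rangle=2\delta_{\alpha\beta}|v|^2$. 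The operator $A_\alpha^*A_\beta+A_\beta^*A_\alpha$ is self-adjoint, so it is determined by its quadratic form. This gives \eqref{hurw}. For $\alpha<\beta$ the identity reads $J_{\alpha\beta}^*=A_\beta^*A_\alpha=-A_\alpha^*A_\beta=-J_{\alpha\beta}$, so $J_{\alpha\beta}$ is skew-adjoint. For $\alpha=\beta$ it gives $A_\alpha^*A_\alpha=I$.

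\emph{The admissibility criterion.} Condition \eqref{orth} is bilinear in $(a,b)$, so it suffices to check it on basis pairs: $\langle A_\alpha df(X),A_\beta f\rangle=\langle df(X),J_{\alpha\beta}f\rangle=0$ for all $\alpha,\beta$. There are three cases.
\begin{itemize}
\item For $\alpha=\beta$ the pairing is $\langle df(X),f\rangle=\tfrac12 X|f|^2=0$, since $f$ takes values in the unit sphere. So the diagonal conditions hold automatically.
\item For $\alpha<\beta$ the condition is exactly \eqref{jhor}.
\item For $\alpha>\beta$ the pairing is $\langle df(X),J_{\alpha\beta}f\rangle$, and by \eqref{hurw} $J_{\alpha\beta}=A_\alpha^*A_\beta=-A_\beta^*A_\alpha=-J_{\beta\alpha}$. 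Hence it equals $-\langle df(X),J_{\beta\alpha}f\rangle$, which vanishes by the $\beta<\alpha$ case.
\end{itemize}
This proves the equivalence with \eqref{jhor}.

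The step needing the most care is the polarization: one must vary $c$ over all of $C$, not only over unit vectors, and then use that a symmetric bilinear form on $\R^d$ with identically zero quadratic form vanishes. Everything after that is bookkeeping.
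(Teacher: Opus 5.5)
Your proof is correct and follows the paper's argument: you polarize the norm identity in the coefficient variable to obtain \eqref{hurw}, then expand $a,b$ in the basis $(e_\alpha)$, and the diagonal terms vanish because $\langle df(X),f\rangle=0$. You also spell out two points the paper leaves implicit, and both are handled correctly: recovering the self-adjoint operator from its quadratic form, and reducing the pairs with $\alpha>\beta$ to those with $\alpha<\beta$ via $A_\alpha^*A_\beta=-A_\beta^*A_\alpha$.
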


   \begin{proof}
        Polarizing the norm identity first in the coefficient variable gives \eqref{hurw}.
          Expanding $a,b\in C$ in the basis $(e_\alpha)$ reduces the admissibility condition \eqref{orth} to \eqref{jhor}.
         The diagonal terms vanish because $\langle df(X),f\rangle=0$.
   \end{proof}

        Use the immersions and orthogonal multiplications in \eqref{ansz},
          with induced metrics $g_i$ and $g_\Psi$.
         Set $\rho_i=|c_i|>0$,
           $K=\sum_i k_i$,
        and $n=r+K$.
            The density $W=\prod_i\rho_i^{k_i}$ is also regarded as a function on the coefficient sphere.

   \begin{theorem}[Exact composition principle]\label{comp}
        Assume that each pair $(f_i,c_i)$ is profile-compatible.
          Then
     \begin{equation}\label{metr}
       g_F=g_\Psi\oplus\bigoplus_i\rho_i^2g_i,
       \qquad
       d\Vol_F=W\,d\Vol_\Psi\prod_i d\Vol_{g_i},
     \end{equation}
        so $F$ is an immersion.
          Moreover,
         $F$ is minimal in the unit sphere if and only if every $f_i$ is minimal and
     \begin{equation}\label{weqn}
       \mathbf H_\Psi=(\nabla^{\Sphere}\log W)^\perp.
     \end{equation}
        Equation \eqref{weqn} is equivalent both to stationarity of $\int_PW\,d\Vol_\Psi$ under compactly supported variations and to minimality in $W^{2/r}g_{\Sphere}$.
   \end{theorem}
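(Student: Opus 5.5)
We compute the first and second derivatives of $F$ directly, using only the norm identity of each $\mu_i$ and the compatibility condition \eqref{pmix}.

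\emph{Metric.} Write $F_i=\mu_i(f_i,c_i)$, so that
\[
dF_i(X+U)=\mu_i(df_i(X),c_i)+\mu_i(f_i,dc_i(U)).
\]
Polarizing $|\mu_i(v,c)|=|v||c|$ in the input variable gives $\langle\mu_i(v,c),\mu_i(w,c)\rangle=\langle v,w\rangle|c|^2$. Hence $|\mu_i(df_i X,c_i)|^2=\rho_i^2 g_i(X,X)$. Because $|f_i|=1$, the same polarization in the coefficient variable gives $\langle\mu_i(f_i,dc_iU),\mu_i(f_i,dc_iV)\rangle=\langle dc_iU,dc_iV\rangle$. The cross terms vanish by \eqref{pmix}, and different blocks $Z_i$ are orthogonal. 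Summing over $i$ gives \eqref{metr}; the volume formula and the immersion property follow at once.

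\emph{Mean curvature.} I would use $\mathbf H_F^{\Sphere}=\Delta_F F+nF$ and the warped-product Laplacian for $g_F=g_\Psi\oplus\bigoplus_i\rho_i^2g_i$:
\[
\Delta_F=\Delta_\Psi+\langle\nabla^\Psi\log W,\nabla^\Psi\cdot\rangle+\sum_i\rho_i^{-2}\Delta_{g_i}.
\]
Apply this to $F_i$. The fiber part is $\rho_i^{-2}\mu_i(\Delta f_i,c_i)=\rho_i^{-2}\mu_i(\mathbf H_{f_i}-k_if_i,c_i)$.

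The profile part requires a bilinearity remark: $F_i$ is linear in $c_i$ for fixed $x_i$, so $\Delta_\Psi F_i=\mu_i(f_i,\Delta_\Psi c_i)$, and likewise for the gradient term. Therefore, with $\Delta_\Psi\Psi=\mathbf H_\Psi-r\Psi$, the $i$-th block of $\Delta_FF+nF$ is
\[
\mu_i\!\bigl(f_i,\ (\mathbf H_\Psi)_i+(\nabla^\Psi_{\nabla\log W}\Psi)_i+(K-k_i\rho_i^{-2})c_i\bigr)+\rho_i^{-2}\mu_i(\mathbf H_{f_i},c_i).
\]

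Next I would rewrite the weight terms. On the coefficient sphere, $\nabla^{\Sphere}\log W$ has $i$-th block $k_i\rho_i^{-2}c_i-K\,c_i$, since $\partial\log\rho_i=c_i/\rho_i^2$ followed by projection off $\Psi$ subtracts $\bigl(\sum_ik_i\bigr)\Psi$. Decomposing this vector into tangent and normal parts along $\Psi$ gives
\[
(\nabla^{\Sphere}\log W)^\top=\nabla^\Psi\log W=d\Psi(\nabla^\Psi\log W).
\]
So the profile contribution collapses to $\mu_i\bigl(f_i,\ (\mathbf H_\Psi-(\nabla^{\Sphere}\log W)^\perp)_i\bigr)$, and
\[
\mathbf H_F=\bigl(\mu_i(f_i,V_i)\bigr)_i+\bigl(\rho_i^{-2}\mu_i(\mathbf H_{f_i},c_i)\bigr)_i,\qquad V=\mathbf H_\Psi-(\nabla^{\Sphere}\log W)^\perp .
\]

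\emph{Splitting and orthogonality.} We must show that these two vectors are orthogonal and that each vanishes only when its source vanishes. Here $\mathbf H_{f_i}\perp f_i$ and $\mathbf H_{f_i}\perp df_i$, while $V$ is normal to $\Psi$ in the sphere. This is the step I expect to be the main obstacle.

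Compatibility alone does not give $\langle\mu_i(\mathbf H_{f_i},c_i),\mu_i(f_i,V_i)\rangle=0$, because \eqref{pmix} only controls $df_i$, not normal vectors. I would try differentiating \eqref{pmix} along $M_i$: write $\Delta f_i$ as a sum of second derivatives $\nabla df_i$ and use that \eqref{pmix} holds identically in $x_i$. Combined with the identity $\langle\mu(v,c),\mu(w,c')\rangle+\langle\mu(v,c'),\mu(w,c)\rangle=2\langle v,w\rangle\langle c,c'\rangle$ and with $\langle f_i,df_i\rangle=0$, this should show that the cross pairing vanishes in the relevant directions. If it fails in general, the fallback is to restrict to blocks where $V_i\in\operatorname{span}\{c_i,dc_i(TP)\}^\perp$ and argue by the same identity.

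Given orthogonality, nondegeneracy is clear. First, $|\mu_i(f_i,V_i)|=|V_i|$, so the profile term vanishes iff $V=0$. Second, $|\mu_i(\mathbf H_{f_i},c_i)|=\rho_i|\mathbf H_{f_i}|$, so the input term vanishes iff each $\mathbf H_{f_i}=0$. This gives the minimality criterion \eqref{weqn}.

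\emph{Equivalences.} The first variation of $\int W\,d\Vol_\Psi$ along a normal field $\phi$ is $-\int\langle\mathbf H_\Psi-\nabla^{\Sphere}\log W,\phi\rangle W$, which vanishes for all compactly supported $\phi$ exactly when \eqref{weqn} holds. For the conformal metric $\tilde g=W^{2/r}g_{\Sphere}$, we have $d\Vol_{\tilde g}=W\,d\Vol_\Psi$ on $r$-dimensional submanifolds, so stationarity for $\tilde g$-area is the same variational problem, i.e. minimality for $\tilde g$.
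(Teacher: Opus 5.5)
You have a genuine gap in the "only if" direction, at the orthogonality step you flagged yourself. Everything else is correct and matches the paper: the metric computation, the warped-product Laplacian, the identification of the profile block $V_i=\Delta_Wc_i+(n-k_i\rho_i^{-2})c_i$, and the variational and conformal equivalences. This already proves that minimal inputs together with \eqref{weqn} give a minimal $F$.

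Differentiating \eqref{pmix} along $M_i$ in a geodesic frame and tracing does work, but it gives only a first-order statement in the profile. Using $\sum_a\langle\mu_i(df_iX_a,c_i),\mu_i(df_iX_a,dc_iU)\rangle=k_i\langle c_i,dc_iU\rangle$ to cancel the radial part $-k_if_i$ of $\Delta_if_i$, one gets
\[
\beta_i(U):=\langle\mu_i(\mathbf H_{f_i}^{\Sphere},c_i),\mu_i(f_i,dc_i(U))\rangle=0\qquad(U\in TP).
\]
This is orthogonality against $\mu_i(f_i,\cdot)$ on $\operatorname{span}\{c_i,dc_i(TP)\}$; the $c_i$ part is simply $\rho_i^2\langle\mathbf H_{f_i}^{\Sphere},f_i\rangle=0$. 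But $V_i$ contains $\Delta_Wc_i$, a second-order quantity that in general lies outside that span, so no pointwise argument finishes at this stage. Your fallback of restricting to $V_i\perp\operatorname{span}\{c_i,dc_i(TP)\}$ does not help. It would prove only a restricted statement, and in that case the first-order identity says nothing about $V_i$ at all.

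The missing idea is to differentiate a second time, now along $P$ with $x_i$ frozen. Since $\beta_i\equiv0$, also $\operatorname{div}_W\beta_i=\operatorname{div}\beta_i+\beta_i(\nabla^P\log W)=0$. In a geodesic frame $(U_\alpha)$ this equals
\[
\langle\mu_i(\mathbf H_{f_i}^{\Sphere},c_i),\mu_i(f_i,\Delta_Wc_i)\rangle+\sum_\alpha\langle\mu_i(\mathbf H_{f_i}^{\Sphere},dc_iU_\alpha),\mu_i(f_i,dc_iU_\alpha)\rangle .
\]
The last sum is $\sum_\alpha|dc_iU_\alpha|^2\langle\mathbf H_{f_i}^{\Sphere},f_i\rangle=0$ by polarization in the first variable. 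Together with the $c_i$ component, this gives $\langle\mu_i(\mathbf H_{f_i}^{\Sphere},c_i),\mu_i(f_i,V_i)\rangle=0$. Your nondegeneracy argument ($|\mu_i(f_i,V_i)|=|V_i|$ and $|\mu_i(\mathbf H_{f_i}^{\Sphere},c_i)|=\rho_i|\mathbf H_{f_i}^{\Sphere}|$) then completes the proof, as in the paper. Your remark that compatibility alone does not give the orthogonality is true only pointwise. Because \eqref{pmix} holds identically on $P\times M_i$, its successive derivatives along $M_i$ and then along $P$ do give it.
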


        A smooth immersed solution of \eqref{weqn} is called a \emph{$W$-minimal profile}.
          Thus an embedded profile is a minimal submanifold for the conformal metric $W^{2/r}g_{\Sphere}$.

   \begin{proof}
        The proof has two parts.
          We first derive the metric and weighted profile equation.
         We then compute the full spherical mean-curvature vector and show that its input and profile contributions are orthogonal.

        The target summands are mutually orthogonal.
          Within one block,
         norm polarization gives $g_\Psi$ on profile directions and $\rho_i^2g_i$ on the $i$th input directions.
           The remaining mixed terms vanish by \eqref{pmix}.
        This proves \eqref{metr}.
            For a spherical normal variation field $V$ along $\Psi$,
     $$
     \delta\!\int_PW\,d\Vol_\Psi[V]
     =-\int_PW\,
     \bigl\langle\mathbf H_\Psi-(\nabla^\Sphere\log W)^\perp,V\bigr\rangle
     \,d\Vol_\Psi,
     $$
        which gives \eqref{weqn}.
          Taking $u=r^{-1}\log W$ in the conformal mean-curvature formula gives the equivalent conformal formulation.

        All operators on $P$ below are taken with respect to $g_\Psi$.
          Define
     \begin{equation}\label{drft}
       \Delta_Wv=\Delta_Pv+
       \langle\nabla^P\log W,\nabla^Pv\rangle.
     \end{equation}
        The $i$th block of $\nabla^{\Sphere}\log W$ is $(k_i\rho_i^{-2}-K)c_i$,
          so \eqref{weqn} is equivalent to
     \begin{equation}\label{blck}
       \mathcal A_i:=\Delta_Wc_i+(n-k_i\rho_i^{-2})c_i=0.
     \end{equation}
        For the $i$th block,
          the warped-product Laplacian formula gives
     $$
     \Delta_F\mu_i(f_i,c_i)
     =\mu_i(f_i,\Delta_Wc_i)
     +\rho_i^{-2}\mu_i(\Delta_i f_i,c_i).
     $$
        Using $\Delta_i f_i=\mathbf H_{f_i}^{\Sphere}-k_if_i$ and $n=r+K$ yields the full vector identity
     \begin{equation}\label{lapl}
       (\Delta_FF+nF)_i
       =\mu_i(f_i,\mathcal A_i)
       +\rho_i^{-2}\mu_i(\mathbf H_{f_i}^{\Sphere},c_i).
     \end{equation}

        We claim that the two terms in \eqref{lapl} are orthogonal.
          Choose a geodesic $g_i$-orthonormal frame $(X_a)$ at the point in $M_i$.
         Differentiating \eqref{pmix} in the $X_a$ directions and tracing gives
     $$
     0=\langle\mu_i(\Delta_i f_i,c_i),\mu_i(f_i,dc_i(U))\rangle
     +\sum_a\langle\mu_i(df_i(X_a),c_i),
     \mu_i(df_i(X_a),dc_i(U))\rangle.
     $$
        The second term is $k_i\langle c_i,dc_i(U)\rangle$,
          which cancels the radial term $-k_if_i$ in $\Delta_i f_i=\mathbf H_{f_i}^{\Sphere}-k_if_i$.
         Hence
     \begin{equation}\label{mcur}
       \langle\mu_i(\mathbf H_{f_i}^{\Sphere},c_i),
       \mu_i(f_i,dc_i(U))\rangle=0.
     \end{equation}
        For fixed $x_i$,
          let $\beta_i$ be the one-form given by the left side of \eqref{mcur}.
         Thus $\beta_i\equiv0$.
           Set $\operatorname{div}_W\beta=\operatorname{div}\beta+ \beta(\nabla^P\log W)$.
        At the point under consideration,
            choose a geodesic $g_\Psi$-orthonormal frame $(U_\alpha)$ on $P$.
          Differentiating and tracing gives
     \begin{align*}
       0=\operatorname{div}_W\beta_i
       ={}&\langle\mu_i(\mathbf H_{f_i}^{\Sphere},c_i),
       \mu_i(f_i,\Delta_Wc_i)\rangle\\
       &+\sum_\alpha
       \langle\mu_i(\mathbf H_{f_i}^{\Sphere},dc_i(U_\alpha)),
       \mu_i(f_i,dc_i(U_\alpha))\rangle.
     \end{align*}
        For each $\alpha$,
          norm polarization in the first variable identifies the last inner product with $|dc_i(U_\alpha)|^2 \langle\mathbf H_{f_i}^{\Sphere},f_i\rangle$,
         which is zero.
           Therefore
     $$
     \langle\mu_i(\mathbf H_{f_i}^{\Sphere},c_i),
     \mu_i(f_i,\Delta_Wc_i)\rangle=0.
     $$
        Moreover,
     $$
     \langle\mu_i(\mathbf H_{f_i}^{\Sphere},c_i),
     \mu_i(f_i,\mathcal A_i-\Delta_Wc_i)\rangle
     =(n-k_i\rho_i^{-2})\rho_i^2
     \langle\mathbf H_{f_i}^{\Sphere},f_i\rangle=0.
     $$
        Thus $\Delta_Wc_i$ may be replaced by $\mathcal A_i$.
          Hence the two terms in \eqref{lapl} are orthogonal.
         Since $|\mu_i(v,c)|=|v||c|$,
           each partial map is injective when the other factor is nonzero.
        Therefore $\Delta_FF=-nF$ holds exactly when $\mathbf H_{f_i}^{\Sphere}=0$ and $\mathcal A_i=0$ for every $i$.
            Takahashi's criterion \cite{Tak66} completes the proof.
   \end{proof}

        The next estimate shows that compatibility is also exactly the equality case in the natural volume bound.

   \begin{proposition}[Jacobian bound and equality case]\label{volr}
        Let $J_F$ be the $n$-dimensional Jacobian density of $F$ relative to $d\Vol_\Psi\prod_i d\Vol_{g_i}$.
          Without assuming \eqref{pmix},
         the composition satisfies the pointwise bound
     \begin{equation}\label{volb}
       J_F\leq W.
     \end{equation}
        Equality holds precisely when all profile--input mixed blocks vanish,
          equivalently when every $(f_i,c_i)$ is profile-compatible.
         In particular,
           if equality holds everywhere and $F$ is minimal,
        then every input is minimal and the profile satisfies \eqref{weqn}.
   \end{proposition}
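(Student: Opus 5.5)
The plan is to compute the Gram matrix of $dF$ in a frame adapted to the product structure, and then apply Hadamard's inequality for positive semidefinite block matrices.

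Fix a point $(p,x_1,\ldots,x_m)$. Choose a $g_\Psi$-orthonormal frame $(U_\alpha)$ of $T_pP$ and, for each $i$, a $g_i$-orthonormal frame $(X^i_a)$ of $T_{x_i}M_i$. The target blocks $Z_i$ are mutually orthogonal, and the $i$th input directions move only the $i$th block. Hence input directions of different blocks are orthogonal.

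Within block $i$, norm polarization in the first variable gives
$$\langle\mu_i(df_i(X),c_i),\mu_i(df_i(Y),c_i)\rangle=\rho_i^2g_i(X,Y).$$
So the $i$th input block of the Gram matrix is $\rho_i^2 I_{k_i}$. Summing over $i$, the profile block is
$$\sum_i\langle\mu_i(f_i,dc_i(U_\alpha)),\mu_i(f_i,dc_i(U_\beta))\rangle=\sum_i\langle dc_i(U_\alpha),dc_i(U_\beta)\rangle=\delta_{\alpha\beta},$$
using $|f_i|=1$ and polarization in the second variable. The only remaining entries are the mixed blocks $B_i$, with entries
$$(B_i)_{\alpha a}=\langle\mu_i(f_i,dc_i(U_\alpha)),\mu_i(df_i(X^i_a),c_i)\rangle.$$

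The Gram matrix is therefore
$$G=\begin{pmatrix} I_r & B\\ B^{T} & D\end{pmatrix},\qquad D=\operatorname{diag}(\rho_i^2I_{k_i}),\quad B=(B_1,\ldots,B_m).$$
With respect to the reference measure $d\Vol_\Psi\prod_i d\Vol_{g_i}$, the Jacobian is $J_F=\sqrt{\det G}$. Since $G$ is positive semidefinite, I will use the Schur complement: $\det G=\det D\cdot\det(I_r-BD^{-1}B^T)$. Here $D$ is positive definite because $\rho_i>0$. The matrix $BD^{-1}B^T$ is positive semidefinite, so
$$\det(I_r-BD^{-1}B^T)\le 1,$$
since the eigenvalues of $I_r-BD^{-1}B^T$ lie in $[0,1]$. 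Their nonnegativity follows from $G\ge0$.

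This gives $\det G\le\det D=\prod_i\rho_i^{2k_i}=W^2$, that is, $J_F\le W$. Equality forces every eigenvalue to equal $1$, so $BD^{-1}B^T=0$. As $D^{-1}$ is positive definite, this gives $D^{-1/2}B^T=0$, i.e. $B=0$. Conversely, $B=0$ clearly gives equality.

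It remains to identify $B=0$ with compatibility. Letting the frames range over all vectors, $B_i=0$ at every point is literally \eqref{pmix}. This is the step where care is needed. \eqref{pmix} concerns only the pairing of $\mu_i(df_i(X),c_i)$ with $\mu_i(f_i,dc_i(U))$. The other cross terms, between profile and input directions in different blocks, vanish automatically by block orthogonality. Therefore equality everywhere is equivalent to profile-compatibility of every pair $(f_i,c_i)$. The last sentence of the proposition then follows directly from Theorem~\ref{comp}.

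The main obstacle is purely bookkeeping: verifying the diagonal blocks exactly via polarization, and not overlooking that $F$ may fail to be an immersion when $B\neq0$. In that case $\det G$ may vanish, and the inequality still holds trivially.
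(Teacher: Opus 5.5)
Your proof is correct and takes essentially the paper's route. The paper identifies the same diagonal Gram blocks $g_\Psi,\rho_1^2g_1,\ldots,\rho_m^2g_m$ and cites the Hadamard--Fischer inequality together with its equality case. Your Schur-complement computation proves that inequality and equality case directly; it uses the positive definiteness of $D$ and of the profile block, which is exactly what makes equality equivalent to $B=0$.
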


   \begin{proof}
        The diagonal Gram blocks are $g_\Psi,\rho_1^2g_1,\ldots,\rho_m^2g_m$.
          The Hadamard--Fischer determinant inequality gives \eqref{volb},
         with equality precisely when every off-diagonal block vanishes.
   \end{proof}

        In the one-block case,
          compatible immersions $f:M\to\Sphere(E)$ and $q:L\to\Sphere(C)$ give a product-isometric map $\mu(f,q)$,
         minimal exactly when both factors are minimal.

\subsection{Admissible inputs and universal modules}

        Table \ref{tab:modu} summarizes the standard profile-independent conditions.
          Right multiplication is used in the complex and quaternionic scalar rows.
         Here $d=\dim C$,
           and the last column is the effective radial exponent after the full coefficient sphere is included.
     \begin{table}[ht]
        \caption{Standard coefficient modules and admissible-input conditions.}
        \label{tab:modu}
        \small
        \centering
       \begin{tabular}{@{}>{\raggedright\arraybackslash}p{.27\linewidth}
                    >{\centering\arraybackslash}p{.14\linewidth}
                    >{\raggedright\arraybackslash}p{.37\linewidth}
         c@{}}
         \toprule
         Module & Coefficients & Admissibility of $f$ & Exponent\\
         \midrule
         Real scalars & $\Sphere^0$ & Automatic & $k$\\
         Complex scalars & $\Sphere^1$ & $\C$-horizontal & $k+1$\\
         Quaternionic scalars & $\Sphere^3$ & $\mathbb H$-horizontal & $k+3$\\
         Tensor $v\otimes c$ & $\Sphere^{d-1}$ & Automatic & $k+d-1$\\
         Orthogonal $\mu$ & $\Sphere^{d-1}$ & Condition \eqref{jhor} & $k+d-1$\\
         \bottomrule
       \end{tabular}
     \end{table}
         By a \emph{tensor inclusion} we mean an isometric linear embedding $\widetilde\mu:E\otimes C\hookrightarrow Z$
           whose associated bilinear map is $\mu(v,c)=\widetilde\mu(v\otimes c)$.
          Tensor multiplication is universal,
         and quaternionic right multiplication restricted to the standard real form $\R^N\subset\mathbb H^N$ is a tensor inclusion.
           The next proposition shows that admissibility for a general orthogonal multiplication already forces a pointwise tensor structure.

   \begin{proposition}[Pointwise tensor structure]
   \label{mdim}
        Let $\mu:E\times C\to Z$ be an orthogonal multiplication,
          $d=\dim C$,
         and let $f:M^k\to\Sphere(E)$ be $\mu$-admissible.
           Then
     \begin{equation}\label{dimb}
       d(k+1)\leq\dim Z.
     \end{equation}
        More precisely,
          for $\mathcal T_x=\R f(x)\oplus df_x(T_xM)$,
         the linear map
     $$
     \mathcal T_x\otimes C\longrightarrow Z,
     \qquad v\otimes c\longmapsto\mu(v,c),
     $$
        is an isometric embedding.
          If $f(M)$ is a full totally geodesic sphere,
         then $\mu$ itself is a tensor inclusion up to an ambient orthogonal map.
           Consequently,
        every spherical immersion is $\mu$-admissible if and only if $\mu$ is such a tensor inclusion.
   \end{proposition}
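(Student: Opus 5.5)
The plan is to prove the isometric embedding statement first and read off \eqref{dimb} from it. The dimension bound is then immediate, since $\dim(\mathcal T_x\otimes C)=d(k+1)$ because $f$ is an immersion and $f\perp df_x(T_xM)$.

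For the isometry, fix $x$ and choose an orthonormal basis $v_0=f(x),v_1,\dots,v_k$ of $\mathcal T_x$, where $v_a=df_x(X_a)$ for a $g$-orthonormal frame $(X_a)$. Let $e_1,\dots,e_d$ be an orthonormal basis of $C$ with $A_\alpha v=\mu(v,e_\alpha)$ as in Lemma \ref{admtest}. It suffices to show that the vectors $\mu(v_a,e_\alpha)=A_\alpha v_a$ are orthonormal in $Z$. Their norms are $1$ by the orthogonal multiplication identity. For the inner products:
\begin{itemize}
\item If $\alpha=\beta$ and $a\neq b$, then $\langle A_\alpha v_a,A_\alpha v_b\rangle=\langle v_a,v_b\rangle=0$ by \eqref{hurw}.
\item If $\alpha\neq\beta$, \eqref{hurw} says $J_{\alpha\beta}=A_\alpha^*A_\beta$ is skew-adjoint, so $\langle A_\alpha v_a,A_\beta v_b\rangle=\langle v_a,J_{\alpha\beta}v_b\rangle$ is skew in $(a,b)$ and vanishes for $a=b$.
\item The pairs with one index $0$, i.e.\ $\langle df(X),J_{\alpha\beta}f\rangle$, vanish by \eqref{jhor}.
\end{itemize}
The remaining case, $\alpha\neq\beta$ with $a,b\ge1$ distinct, is the main obstacle. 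Admissibility only controls mixed input--coefficient pairs involving $f$, and does not obviously control $\langle df(X),J_{\alpha\beta}df(Y)\rangle$.

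To handle it, I would differentiate \eqref{jhor}. Along $M$ the identity $\langle df(Y),J_{\alpha\beta}f\rangle\equiv0$ holds, and applying $X$ gives
$$\langle \nabla^E_X df(Y),J_{\alpha\beta}f\rangle+\langle df(Y),J_{\alpha\beta}df(X)\rangle=0.$$
The first term is symmetric in $(X,Y)$, since the Hessian of $f$ in $E$ is symmetric. The second term is skew, because $J_{\alpha\beta}$ is skew-adjoint. A quantity that is both symmetric and skew must vanish, so each term vanishes separately. This gives $\langle A_\alpha v_a,A_\beta v_b\rangle=0$ for all remaining cases, completing the isometry.

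For the totally geodesic case, if $f(M)$ is an open piece of the full great sphere of $E$, then $\mathcal T_x=E$ at every point, so $v\otimes c\mapsto\mu(v,c)$ is an isometric embedding $E\otimes C\to Z$. In other words, $\mu$ is a tensor inclusion, with the ambient orthogonal map identifying its image. For the final equivalence:
\begin{itemize}
\item If $\mu$ is a tensor inclusion, then $\langle\mu(df(X),a),\mu(f,b)\rangle=\langle df(X),f\rangle\langle a,b\rangle=0$, so every spherical immersion is admissible.
\item Conversely, if every immersion is admissible, apply this to the identity embedding of $\Sphere(E)$. The previous step then shows that $\mu$ is a tensor inclusion.
\end{itemize}
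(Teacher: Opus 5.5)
Your proof is correct and follows the paper's argument. The paper obtains $\langle df(X),J_{\alpha\beta}df(Y)\rangle=0$ by taking the exterior derivative of the vanishing pullback $f^*\alpha_{\alpha\beta}$, where $\alpha_{\alpha\beta}|_y(V)=\langle V,J_{\alpha\beta}y\rangle$; this is exactly your symmetric/skew splitting of the differentiated identity \eqref{jhor}. It then checks the same pairings to get pairwise orthogonal planes $A_1\mathcal T_x,\ldots,A_d\mathcal T_x$, and handles the totally geodesic case and the final equivalence just as you do, using the identity embedding of $\Sphere(E)$ and $\langle df(X)\otimes a,f\otimes b\rangle=0$.
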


   \begin{proof}
        For $\alpha<\beta$,
          define the one-form $\alpha_{\alpha\beta}$ on $\Sphere(E)$ by $\alpha_{\alpha\beta}|_y(V)=\langle V,J_{\alpha\beta}y\rangle$.
         Then
     $$
     f^*\alpha_{\alpha\beta}=0,
     \qquad
     d\alpha_{\alpha\beta}(V,W)
     =-2\langle V,J_{\alpha\beta}W\rangle.
     $$
        Taking the exterior derivative of $f^*\alpha_{\alpha\beta}=0$ gives
     $$
     \langle df(X),J_{\alpha\beta}df(Y)\rangle=0.
     $$
        For $\alpha\neq\beta$,
          the pairings between $A_\alpha f,A_\beta f$,
         between a radial and a tangent vector,
           and between two tangent vectors vanish respectively by skew-adjointness,
        \eqref{jhor},
            and the preceding identity.
          Together with \eqref{hurw},
         this shows that
     $$
     A_1\mathcal T_x,\ldots,A_d\mathcal T_x
     $$
        are pairwise orthogonal $(k+1)$-planes in $Z$.
          This proves \eqref{dimb} and the tensor description.
         For a full totally geodesic sphere,
           $\mathcal T_x=E$,
        so $\mu$ is a tensor inclusion.
            The converse follows from $\langle df(X)\otimes a,f\otimes b\rangle=0$.
   \end{proof}

   \begin{corollary}[Embedded tensor compositions]\label{etens}
        Let $Q\subset\Sphere(\bigoplus_iC_i)$ be a closed embedded $W$-minimal profile with every block nonzero,
          and suppose that it is invariant under the coordinate sign group $G=(\mathbb Z_2)^m$.
         For the identity inputs $\Sphere^{k_i}\subset\R^{k_i+1}$ and the tensor modules $\mu_i(v,c)=v\otimes c$,
           the composition descends to a closed embedded round-minimal submanifold
     $$
     \left(Q\times\prod_i\Sphere^{k_i}\right)/G
     \longrightarrow
     \Sphere\!\left(\bigoplus_i\R^{k_i+1}\otimes C_i\right),
     \qquad
     [c,x]\longmapsto(x_i\otimes c_i)_i,
     $$
        where $\varepsilon\in G$ acts by $(c_i,x_i)\mapsto(\varepsilon_ic_i,\varepsilon_ix_i)$.
   \end{corollary}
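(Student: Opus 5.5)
The plan is to apply Theorem \ref{comp} to the product $Q\times\prod_i\Sphere^{k_i}$, check that the composed map is invariant under $G$, and then show that it is injective modulo $G$ and an immersion, so that it descends to an embedding of the quotient.

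\textbf{Step 1: minimality.} By Proposition \ref{mdim}, tensor multiplication is a tensor inclusion, so every input is $\mu_i$-admissible and hence profile-compatible with the inclusion $\Psi:Q\hookrightarrow\Sphere(\bigoplus_iC_i)$. The identity inputs $\Sphere^{k_i}\subset\R^{k_i+1}$ are totally geodesic, hence minimal. The weight is $W=\prod_i|c_i|^{k_i}$, and $Q$ is $W$-minimal by hypothesis. Theorem \ref{comp} then says that
$$F(c,x)=(x_i\otimes c_i)_i$$
is a minimal immersion of $Q\times\prod_i\Sphere^{k_i}$ into the unit sphere, with the split metric \eqref{metr}.

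\textbf{Step 2: the action of $G$.} Take $\varepsilon\in G$. It preserves $Q$ by hypothesis, and it preserves each $\Sphere^{k_i}$ via $x_i\mapsto\varepsilon_ix_i$. We have $(\varepsilon_ix_i)\otimes(\varepsilon_ic_i)=x_i\otimes c_i$, so $F$ is $G$-invariant. The action is free: if $\varepsilon_i=-1$ then $x_i\mapsto -x_i$ has no fixed point on the sphere. Since $G$ is finite and the action is free and smooth, the quotient is a closed manifold and $F$ descends to an immersion $\bar F$ of it, which is still minimal because minimality is local.

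\textbf{Step 3: injectivity modulo $G$.} This is the main point. Suppose $x_i\otimes c_i=y_i\otimes c_i'$ for every $i$, where $(c,x),(c',y)\in Q\times\prod_i\Sphere^{k_i}$. Every block $c_i$ is nonzero and each $x_i\neq0$, so $x_i\otimes c_i$ is a nonzero rank-one tensor. A nonzero rank-one tensor determines its factors up to reciprocal scalars, so $y_i=t_ix_i$ and $c_i'=t_i^{-1}c_i$ for some real $t_i\neq0$. Since $|x_i|=|y_i|=1$, we get $t_i=\pm1$. Setting $\varepsilon=(t_i)_i\in G$ gives $(c',y)=\varepsilon\cdot(c,x)$.

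Here we need $c'=\varepsilon c$ as points of the ambient space, and that is automatic because the $c_i'$ are determined blockwise. Moreover $c'=\varepsilon c$ lies in $Q$ by $G$-invariance, and it equals the given point $c'\in Q$, so $Q$ being embedded causes no ambiguity in identifying points. Hence $\bar F$ is injective.

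\textbf{Conclusion.} An injective immersion of a compact manifold is an embedding. Therefore $\bar F$ is a closed embedded minimal submanifold of
$$\Sphere\!\left(\bigoplus_i\R^{k_i+1}\otimes C_i\right).$$
I expect no real obstacle beyond the careful rank-one uniqueness argument in Step 3 together with the use of compactness.
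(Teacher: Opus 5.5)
Your proposal is correct and follows the paper's proof essentially step for step. You use Theorem \ref{comp} for minimality and immersion, check $G$-invariance and freeness of the action, use uniqueness of nonzero rank-one tensor decompositions with unit first factor to show the fibers are exactly the $G$-orbits, and use compactness to upgrade the descended injective immersion to an embedding. The differences are cosmetic: you make the admissibility of the tensor inputs explicit via Proposition \ref{mdim}, and you derive freeness from the sphere factors rather than from the nonvanishing blocks $c_i$.
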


   \begin{proof}
        The action is free because every $c_i$ is nonzero,
          and the tensor composition is invariant under it.
         Theorem \ref{comp} gives round minimality and immersion.
           If two points have the same image,
        uniqueness of a nonzero real rank-one tensor decomposition with unit first factor gives $x_i'=\varepsilon_ix_i$ and $c_i'=\varepsilon_ic_i$ for signs $\varepsilon_i$.
            Thus the fibers are exactly the $G$-orbits.
          The descended compact injective immersion is embedded.
   \end{proof}

\section{Quaternionic inputs and phase enlargement}
\label{qmod}

        The unit coefficient sphere for quaternionic right multiplication is $\Sphere^3=\operatorname{Sp}(1)$.
          Its three phase directions will be used in two ways:
         first as a variable coefficient profile,
           and then to enlarge a common complex Hopf circle to a quaternionic $\Sphere^3$ fiber.

        For a minimal immersion $f:M\to\Sphere(\mathbb H^N)$,
          $\mathbb H$-horizontality is the condition
     \begin{equation}\label{qhzt}
       \langle df(X),f\xi\rangle=0
       \qquad(X\in TM,\ \xi\in\{\mathbf i,\mathbf j,\mathbf k\}).
     \end{equation}
        For any immersion $q:L\to\Sphere^3$,
          Theorem \ref{comp} gives the product metric $g_f\oplus g_q$ for
     \begin{equation}\label{qact}
       F:M\times L\longrightarrow\Sphere(\mathbb H^N),
       \qquad F(x,p)=f(x)q(p),
     \end{equation}
        and $F$ is minimal if and only if $q$ is minimal.
          If $M,L$ are compact and the quaternionic Hopf projection of $f$ and the map $q$ are embeddings,
         then $F$ is embedded:
           equality of images first recovers $x$ by the Hopf projection,
        and then $p$ by injectivity of $q$.

        In particular,
          any minimal surface in $\Sphere^3$,
         including Lawson's surfaces \cite{Law70},
           can serve as the profile;
        $q=\operatorname{id}_{\Sphere^3}$ gives the full quaternionic phase orbit.

        For a complex submanifold invariant under the common circle phase,
          we can enlarge that phase to a quaternionic $\Sphere^3$ fiber.
         Fix $\C_{\mathbf i}=\operatorname{span}_{\R}\{1,\mathbf i\}\subset\mathbb H$ and $\Sphere^1\subset\C_{\mathbf i}$.

   \Needspace{14\baselineskip}
   \begin{proposition}[Complex-to-quaternionic phase enlargement]
   \label{qeng}
        Let
     $$
     N^n\subset\Sphere(\C_{\mathbf i}^m)\subset\Sphere(\mathbb H^m)
     $$
        be a compact embedded submanifold invariant under the common $\Sphere^1$ action.
          The diagonal action
     $$
     \zeta\cdot(z,q)=(z\zeta,\zeta^{-1}q)
     \quad\text{on }N\times\Sphere^3
     $$
        is free,
          and
     \begin{equation}\label{qenl}
       \widehat N=N\times_{\Sphere^1}\Sphere^3
       \longrightarrow\Sphere(\mathbb H^m),
       \qquad [z,q]\longmapsto zq,
     \end{equation}
        is a compact embedding.
          Its traced mean curvature is the right translate of $\mathbf H_N$.
         In particular,
           $\widehat N$ is minimal if and only if $N$ is minimal.
   \end{proposition}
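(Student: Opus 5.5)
Three things need proving: the diagonal action is free, the map $[z,q]\mapsto zq$ is an embedding, and the traced mean curvature of $\widehat N$ is the right translate of $\mathbf H_N$.

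\emph{Freeness and well-definedness.} Freeness is immediate from the $\Sphere^3$ factor: if $\zeta^{-1}q=q$ with $q\neq0$, then $\zeta=1$. The map is well defined because $z\zeta\cdot\zeta^{-1}q=zq$, and $\widehat N$ is compact.

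\emph{Injectivity.} Suppose $zq=z'q'$ with $z,z'\in N\subset\Sphere(\C_{\mathbf i}^m)$. Write $\mathbb H=\C_{\mathbf i}\oplus\C_{\mathbf i}\mathbf j$. Then $w:=z'=zq q'^{-1}=zh$ with $h\in\Sphere^3$. Write $h=a+b\mathbf j$ with $a,b\in\C_{\mathbf i}$. Then $zh=za+(zb)\mathbf j$ componentwise, and this lies in $\C_{\mathbf i}^m$ only if $zb=0$. Since $z\neq0$, this forces $b=0$. Hence $h=a\in\Sphere^1$, $z'=z a$, and $q'=a^{-1}q$, so $[z',q']=[z,q]$. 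A compact injective immersion is then an embedding.

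\emph{Immersion and metric.} At $[z,q]$, tangent vectors come from $X\in T_zN$ and from $q\xi$ with $\xi\in\operatorname{Im}\mathbb H$. Their images are $Xq$ and $zq\xi$. Right multiplication by a unit quaternion is an isometry of $\mathbb H^m$, so after translating by $q^{-1}$ the tangent space at $zq$ becomes $R_q\bigl(T_zN\oplus z\,\operatorname{Im}\mathbb H\bigr)$, modulo the orbit direction. The $\Sphere^1$-invariance gives $z\mathbf i\in T_zN$, so the span is $T_zN+\operatorname{span}\{z\mathbf j,z\mathbf k\}$.

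The key orthogonality is that $z\mathbf j$ and $z\mathbf k$ lie in $\C_{\mathbf i}^m\mathbf j$. This subspace is orthogonal to $\C_{\mathbf i}^m$, which contains both $T_zN$ and $z$. So the tangent space splits orthogonally as $T_zN\oplus\operatorname{span}\{z\mathbf j,z\mathbf k\}$, with dimension $n+2=\dim\widehat N$, and the map is an immersion.

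\emph{Mean curvature.} I will work locally. Take a local slice $\sigma\subset\Sphere^3$ transverse to the left $\Sphere^1$-orbits, i.e.\ $\Sphere^3\to\Sphere^2$. Near $q_0$ the image is locally $\{zq: z\in N,\ q\in\sigma\}$. Using the equivariance, I would instead compute at $q_0=1$, where the image is $R_{q_0}$ applied to the image near $q=1$. Since $R_{q_0}$ is an ambient isometry, the mean curvature at $zq_0$ is $R_{q_0}$ of the mean curvature at $z$ for $q$ near $1$. It therefore suffices to show that $\mathbf H_{\widehat N}(z)=\mathbf H_N(z)$ at points $z\in N$.

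I will use Takahashi's criterion, $\Delta F+(n+2)F=\mathbf H$. Locally, $\widehat N\cong N\times\sigma$ with $F(z,q)=zq$ restricted to $q=\exp(s\mathbf j+t\mathbf k)$-type coordinates. The main obstacle is that the metric is not a product off $q=1$, because $X q$ and $z q\xi$ may have mixed terms. I would handle this with a cleaner parametrization: view $\widehat N$ as the image of $N\times\Sphere^3$ with the $\Sphere^1$-degenerate direction quotiented out. For a Riemannian submersion with totally geodesic fibers, the horizontal Laplacian agrees. Concretely, $F(z,q)=zq$ is a composition of the type in Theorem \ref{comp}, and the metric on $N\times\Sphere^3$ pulled back is degenerate only along the orbit.

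Alternatively, and more directly, I would compute the second fundamental form at $z$ (so $q=1$) in the orthonormal frame $\{e_a\}\cup\{z\mathbf j,z\mathbf k\}$:
\begin{itemize}
\item The curves $z\,e^{s\mathbf j}$ are great circles, which contributes no spherical curvature in the $z\mathbf j$ and $z\mathbf k$ directions.
\item The $N$-directions contribute $\mathbf H_N$ together with normal components of $\nabla_{e_a}e_a$. Because $\C_{\mathbf i}^m$ and $\C_{\mathbf i}^m\mathbf j$ are orthogonal, these components already lie in the normal space of $\widehat N$ as well.
\end{itemize}
Summing gives $\mathbf H_{\widehat N}=\mathbf H_N$ at $q=1$, and translating by $R_q$ gives the general statement.
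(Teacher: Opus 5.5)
Your proof is correct and follows essentially the same route as the paper: freeness from $q\neq0$; injectivity by writing $z'=zh$ and using a nonzero $\C_{\mathbf i}$-coordinate of $z$ to force $h\in\Sphere^1$; the orthogonal splitting $T_zN\oplus\operatorname{span}\{z\mathbf j,z\mathbf k\}$ at $q=1$; the vanishing of the two extra trace terms along the great circles $ze^{t\mathbf j}$ and $ze^{t\mathbf k}$; the fact that $\mathbf H_N\in\C_{\mathbf i}^m$ is already normal to $\widehat N$; and right translation by $R_q$. You should drop the detour through Takahashi's criterion and Theorem \ref{comp}: it is not needed, and Theorem \ref{comp} would not apply anyway, because $z\mathbf i\in T_zN$ means $N$ is not $\mathbb H$-horizontal, so the mixed terms $\langle Xq,zq\xi\rangle$ need not vanish.
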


   \begin{proof}
        The displayed map on $N\times\Sphere^3$ is invariant under the diagonal circle action.
          That action is free because $q\neq0$ and $(z\zeta,\zeta^{-1}q)=(z,q)$ forces $\zeta=1$.
         At $[z,1]$ the image tangent space is the orthogonal sum
     $$
     T_zN\oplus\operatorname{span}_{\R}\{z\mathbf j,z\mathbf k\}.
     $$
        Indeed,
          the $z\mathbf i$ direction is already tangent to $N$ and is precisely the direction removed by the diagonal circle.
         The curves $ze^{t\mathbf j}$ and $ze^{t\mathbf k}$ are great circles,
           so the two additional trace terms in the spherical second fundamental form vanish.
        Since $\mathbf H_N\in\C_{\mathbf i}^m$ is orthogonal to $z\mathbf j$ and $z\mathbf k$,
            tracing gives $\mathbf H_{\widehat N}([z,1])=\mathbf H_N(z)$.
          Right translation gives the assertion at every $[z,q]$.

        The same tangent decomposition shows that the differential kernel before quotienting is exactly the diagonal circle direction.
          If $zq=z'q'$,
         set $h=qq'^{-1}$,
           so that $z'=zh$.
        A nonzero complex coordinate of $z$ forces $h\in\Sphere^1\subset\C_{\mathbf i}$.
            Hence the two pairs differ by the diagonal action.
          The descended immersion is therefore injective,
         and compactness makes it an embedding.
   \end{proof}

        Thus \eqref{qenl} is the quaternionic Hopf pullback of $N/\Sphere^1$,
          replacing its complex circle fibers by $\Sphere^3$ fibers.
         In particular,
           a closed $\C$-horizontal minimal immersion $f:M\to\Sphere(\C_{\mathbf i}^m)$ is $\mathbb H$-horizontal:
        $df(TM)\subset\C_{\mathbf i}^m$,
            whereas $f\mathbf j,f\mathbf k\in(\C_{\mathbf i}^m)^\perp$.
          If $f(M)\Sphere^1$ is embedded,
         Proposition \ref{qeng} gives a compact minimal enlargement.

   \begin{example}[A dimension-sharp non-tensor input]\label{qclif}
        For $\ell\geq3$,
          let
     $$
     \T_{\ell}^{\mathrm{Cl}}
     =\{z\in(\Sphere^1)^\ell:z_1\cdots z_\ell=1\},
     \qquad
     f_\ell(z)=\ell^{-1/2}(z_1,\ldots,z_\ell).
     $$
        With $z_a=e^{i\theta_a}$,
          the tangent condition is $\sum_a d\theta_a=0$.
         Hence the induced metric is flat,
           $\Delta f_\ell=-(\ell-1)f_\ell$,
        and $\langle df_\ell,f_\ell\,i\rangle=0$.
            Thus $f_\ell$ is a closed minimal $\C$-horizontal embedding.
          Its full phase saturation is the Clifford torus
     $$
     \mathcal C_\ell:=f_\ell(\T_\ell^{\mathrm{Cl}})\Sphere^1\cong
     (\T_\ell^{\mathrm{Cl}}\times\Sphere^1)/\Gamma_\ell,
     \qquad
     \Gamma_\ell=\{\zeta\in\Sphere^1:\zeta^\ell=1\}.
     $$
        Proposition \ref{qeng} therefore gives the compact minimal embedding
     $$
     (\T_\ell^{\mathrm{Cl}}\times\Sphere^3)/\Gamma_\ell
     \longrightarrow\Sphere(\mathbb H^\ell),
     \qquad [z,q]\longmapsto f_\ell(z)q.
     $$
        Finally,
          the characters $e^{i\theta_1},\ldots,e^{i\theta_{\ell-1}}$ and $e^{-i(\theta_1+\cdots+\theta_{\ell-1})}$ are Fourier independent for $\ell\geq3$.
         Hence $\operatorname{span}_{\R}f_\ell(\T_\ell^{\mathrm{Cl}}) =\C_{\mathbf i}^\ell$ has dimension $2\ell$.
           Hence its image cannot lie in a real $\ell$-plane on which quaternionic multiplication is a tensor inclusion.
        It also attains the dimension bound in Proposition \ref{mdim}.
   \end{example}

\section{Weighted orbit reduction}
\label{orbt}

        Let a compact Lie group $G$ act isometrically with a single orbit type on an invariant open subset $U$ of the coefficient sphere,
          and suppose that $W>0$ is invariant.
         For the quotient Riemannian submersion $\pi:U\to B=U/G$,
           define
     \begin{equation}\label{orvl}
       V(b)=\Vol(\pi^{-1}(b)),
       \qquad \widehat W(b)=W(b)V(b),
     \end{equation}
        where orbit volume uses the spherical metric and $W$ also denotes the descended density.
          Thus $W$ denotes the weight before reduction,
         while $\widehat W$ always denotes the orbit-volume-corrected weight on the quotient.
          All variations are supported in $U$.

   \begin{theorem}[Weighted orbit reduction]\label{redc}
        Let $\sigma:S\to B$ be an immersion of a $d$-dimensional manifold,
          $d\geq1$,
         and let $P=S\times_B U$ be its pullback with the natural immersion into $U$.
           Then $P$ is $W$-minimal if and only if $S$ is $\widehat W$-minimal in $B$.
        For $d=1$,
            this is the geodesic equation of
     \begin{equation}\label{rmet}
       h=\widehat W^{\,2}g_B=(WV)^2g_B.
     \end{equation}
        Together with Theorem \ref{comp},
          this produces spherical minimal compositions from quotient geodesics.
   \end{theorem}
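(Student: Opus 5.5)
The plan is to reduce the statement to a variational identity via the coarea/fibration structure, using the equivalence in Theorem \ref{comp}. Since $P=S\times_B U\to S$ is a fiber bundle whose fiber over $s$ is the orbit $\pi^{-1}(\sigma(s))$, and $\pi$ is a Riemannian submersion, the induced metric on $P$ splits orthogonally at each point into the horizontal lift of $d\sigma(T_sS)$ and the vertical orbit tangent space. Hence $d\Vol_P=V(\sigma(s))\,d\Vol_S\,d\Vol_{\mathrm{orbit}}/V$ after fiber integration. Because $W$ is $G$-invariant,
$$
\int_P W\,d\Vol_P=\int_S W(\sigma)V(\sigma)\,d\Vol_S=\int_S\widehat W\,d\Vol_S .
$$
This holds for every immersion $\sigma$ and every compactly supported $G$-invariant deformation of $P$ (these are exactly the pullbacks of deformations of $\sigma$). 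Therefore $S$ is $\widehat W$-stationary if and only if $P$ is stationary for the weighted volume among $G$-invariant variations.

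The remaining step, and the main obstacle, is to pass from invariant variations to all variations, which is Palais' principle of symmetric criticality. I would prove it concretely rather than cite it. Since $P$ and $W$ are $G$-invariant and $G$ acts isometrically, the weighted mean curvature vector $\mathbf H_P-(\nabla^{\Sphere}\log W)^\perp$ is a $G$-equivariant normal field along $P$. The first variation formula from the proof of Theorem \ref{comp} shows that stationarity for invariant variations makes this field $L^2(W\,d\Vol_P)$-orthogonal to every invariant compactly supported normal field. For an arbitrary normal field $X$, averaging over $G$ with Haar measure produces an invariant field $\bar X$ with the same pairing, because the integrand is invariant. Testing against $\bar X$, taken to be the weighted mean curvature field itself times an invariant cutoff, forces that field to vanish. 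This proves the first claim in both directions.

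For $d=1$, $\widehat W$-minimality of a curve in $B$ means stationarity of $\int\widehat W\,|\dot\sigma|_{g_B}\,dt$, which is the length functional of the conformal metric $h=\widehat W^2g_B$. Its critical curves are the $h$-geodesics, giving \eqref{rmet}; this is also the case $r=1$ of the conformal equivalence in Theorem \ref{comp}, with exponent $2/r=2$. The last sentence of the statement then follows by feeding the resulting $W$-minimal profile $P$ into Theorem \ref{comp}, together with compatible minimal inputs. I also need to check that the orbit volume $V$ is smooth and positive on $B$. This holds because there is a single orbit type, so $U\to B$ is a smooth fiber bundle with compact fibers.
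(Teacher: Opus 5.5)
Your proposal is correct and follows essentially the same route as the paper. Fiber integration over the compact orbits identifies $\int_P W\,d\Vol_P$ with $\int_S \widehat W\,d\Vol_S$, Haar averaging (symmetric criticality) upgrades stationarity under invariant normal variations to stationarity under all normal variations, and for $d=1$ the reduced functional is the $h$-length. Two small differences: testing against an invariant cutoff times the equivariant field $\mathbf H_P-(\nabla^{\Sphere}\log W)^\perp$ makes your averaging step redundant, and your remark that $V$ is smooth because there is a single orbit type supplies a detail the paper leaves implicit.
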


   \begin{proof}
        The submersion and fiber integration give
     \begin{equation}\label{fibr}
       \int_P W\,d\Vol_P
       =\int_S WV\,d\Vol_S.
     \end{equation}
        Thus the two first variations agree for $G$-invariant variations.
          Every normal field along $P$ is horizontal because the orbit directions are tangent to $P$.
         For a compactly supported normal field,
           its $G$-translates are again normal fields,
        and Haar averaging preserves the first variation by linearity and $G$-invariance.
            The average is invariant and hence descends normally to $S$.
          Conversely,
         normal fields on $S$ have invariant horizontal lifts.
        Hence vanishing of the reduced first variation is equivalent to vanishing of the full first variation.
          For a curve,
         the functional on the right of \eqref{fibr} is the length functional of \eqref{rmet}.
   \end{proof}

   \Needspace{12\baselineskip}
   \begin{corollary}[Full-phase reduction]\label{satn}
        Let
     $$
     \rho=(\rho_1,\ldots,\rho_m):Q^q\longrightarrow\Sphere^{m-1}_{++},
     \qquad q\geq1,
     $$
        and let $\Sphere^{s_i}\subset C_i$ be the full phase sphere in block $i$.
          The saturated profile
     $$
     \Psi(y,u_1,\ldots,u_m)=(\rho_i(y)u_i)_i
     $$
        has
     \begin{equation}\label{satm}
       g_\Psi=g_Q\oplus\bigoplus_i\rho_i^2g_{\Sphere^{s_i}},
       \qquad
       \widehat W(\rho)=\prod_i\rho_i^{k_i+s_i}.
     \end{equation}
        If the $\mu_i$-admissible inputs are minimal,
          then the saturated composition is minimal if and only if $\rho$ is $\widehat W$-minimal.
   \end{corollary}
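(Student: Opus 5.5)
The plan is to apply Theorem \ref{redc} to the product group $G=\prod_i\Sphere^{s_i}$. Here I take $\Sphere^{s_i}$ to act on $C_i$ through the symmetry group of the full phase sphere (for instance $\operatorname{Sp}(1)$ by right multiplication in the quaternionic row, or $\operatorname{SO}(s_i+1)$ for the tensor row). On the open set
\[
U=\{c:\ c_i\neq0\ \text{for all } i\}
\]
the quotient is $B=\Sphere^{m-1}_{++}$ via $c\mapsto(|c_i|)_i$. Then I combine this with Theorem \ref{comp}.

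First I compute the metric. Write $c_i=\rho_iu_i$ with $|u_i|=1$, so that
\[
|dc|^2=\sum_i\bigl(d\rho_i^2+\rho_i^2|du_i|^2\bigr).
\]
The mixed terms vanish because $\langle u_i,du_i\rangle=0$. Restricting to $\rho=\rho(y)$ gives $g_\Psi=g_Q\oplus\bigoplus_i\rho_i^2g_{\Sphere^{s_i}}$, which is the first part of \eqref{satm}. In particular $\Psi$ is an immersion: $\rho$ immerses $Q$ into $\Sphere^{m-1}$, and the quotient map $U\to B$ is a Riemannian submersion. Hence $\Psi$ is exactly the pullback $P=Q\times_BU$ of Theorem \ref{redc}.

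Next I compute the orbit volume. The fiber over $\rho$ is $\prod_i\rho_i\Sphere^{s_i}$, so
\[
V(\rho)=\prod_i\omega_{s_i}\rho_i^{s_i},
\]
where $\omega_{s_i}$ denotes the volume of the unit sphere $\Sphere^{s_i}$. The weight $W=\prod_i\rho_i^{k_i}$ is $G$-invariant. Hence $\widehat W=WV$ equals $\prod_i\rho_i^{k_i+s_i}$ up to a positive constant. A constant factor does not affect $\widehat W$-minimality, since \eqref{weqn} involves only $\nabla\log\widehat W$. This gives the second part of \eqref{satm}.

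For the final equivalence, note first that admissible inputs are compatible with every profile, in particular with $\Psi$. So Theorem \ref{comp} applies: with minimal inputs, the composition is minimal if and only if $\Psi$ is $W$-minimal. By Theorem \ref{redc}, this holds if and only if $\rho$ is $\widehat W$-minimal in $B$. The quotient $B$ carries the spherical metric, since $\sum_i\rho_i^2=1$ and the horizontal directions are the radial ones.

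The main obstacle is justifying the hypotheses of Theorem \ref{redc}: the action must be isometric with a single orbit type on $U$. I would check this directly. The stabilizer of $c$ is the product of the stabilizers of the unit vectors $u_i$. These stabilizers are all conjugate: for $\operatorname{Sp}(1)$ they are trivial, and for $\operatorname{SO}(s_i+1)$ they are copies of $\operatorname{SO}(s_i)$. So there is one orbit type on $U$, and each orbit is exactly the product of spheres described above.

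It also remains to check that the pullback $P$ is parametrized by $Q\times\prod_i\Sphere^{s_i}$ through $\Psi$. This holds because the fiber is exactly that product, and the correspondence $(y,u)\mapsto(y,(\rho_i(y)u_i)_i)$ is a diffeomorphism onto $P$.
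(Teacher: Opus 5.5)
Your argument is essentially the paper's: apply Theorem \ref{redc} to the blockwise orthogonal action on $\{c_i\neq0\}$, observe that the orbit volume is $C\prod_i\rho_i^{s_i}$ so the reduced weight is $C\prod_i\rho_i^{k_i+s_i}$ with an immaterial constant, and conclude with Theorem \ref{comp} using admissibility $\Rightarrow$ compatibility. The only cleanup needed is to take the group uniformly as $\prod_iO(s_i+1)$, as the paper does, because $\prod_i\Sphere^{s_i}$ is not a group in general and $SO(1)$ would not act transitively on an $\Sphere^0$ block.
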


   \begin{proof}
        The product group $\prod_iO(s_i+1)$ has one orbit type where every block is nonzero.
          The orbit through $(\rho_i u_i)_i$ has volume
     $$
     C\prod_i\rho_i^{s_i}.
     $$
        Hence Theorem \ref{redc} changes the profile weight $W=\prod_i\rho_i^{k_i}$ into $C\widehat W$.
          The constant is immaterial,
         and Theorem \ref{comp} gives the assertion.
   \end{proof}

        The action is on the coefficient sphere and need not act on the inputs.
          Thus the reduction also applies to mixed modules whenever the composition is compatible.
         In a single-orbit-type region,
           an embedded closed quotient curve lifts to an embedded compact $W$-minimal profile.

   \begin{remark}[Curves beyond the standard SMP ansatz]\label{nonsmp}
        On $\Sphere^3\subset\C^2$,
          the unequal-speed action $(z_1,z_2)\mapsto(e^{ip\vartheta}z_1,e^{iq\vartheta}z_2)$ has reduced radial factor
         $L_{p,q}^2=p^2\cos^2u+q^2\sin^2u$.
        The diagonal case recovers the common-phase $C_1=-1$ spiral-product equation of \cite{LZ26}.
          If $|p|\neq|q|$,
         then $L_{p,q}$ is nonconstant and the reduced curve lies outside the standard common-phase ansatz.
        Section \ref{usec} develops the quaternionic two-phase analogue.
   \end{remark}

\section{Quaternionic two-phase closing}
\label{usec}

        After one circle quotient,
          the quaternionic construction has the same two-momentum phase-map form as the $\T^2$-invariant weighted $\Sphere^3$ geodesics used for spiral minimal products.
        It is not,
          in general,
         the same period problem.
        Two features change.
          The reduced radial weight acquires an orbit-length factor,
         and phase return is measured in a quotient lattice rather than $2\pi\Z^2$.
        We therefore use the local phase-map analysis of Li--Zhang \cite[Sections 2--3]{LZ26} and retain only the modified axis anchors and the new lattice-return argument.
          When $|p|=|q|$,
         the orbit-length factor is constant and the local equation reduces to the spiral-product equation.
        For $|p|\neq|q|$,
          that factor changes the period map and requires the comparison in Lemma \ref{uaxs}.
            Let $k_1,k_2$ be positive integers,
          and set $W_0(a,b)=|a|^{k_1}|b|^{k_2}$ on $\Sphere^7\subset\mathbb H^2$.
         Fix nonzero coprime integers $p,q$.
        Consider the circle action
     \begin{equation}\label{uact}
       (a,b)\longmapsto
       (ae^{\mathbf i p\vartheta},be^{\mathbf j q\vartheta}),
       \qquad \vartheta\in\mathbb R/2\pi\mathbb Z.
     \end{equation}
        Here $\mathbf i,\mathbf j$ are quaternionic units.
          The action is free on $ab\neq0$.
         With $c=\cos s$ and $d=\sin s$,
           its orbit length is $2\pi L(s)$,
        where
     \begin{equation}\label{uden}
       L(s)=\sqrt{p^2c^2+q^2d^2},
       \qquad A(s)=c^{k_1}d^{k_2}L(s).
     \end{equation}
   \Needspace{12\baselineskip}
   \begin{proposition}[A horizontal quotient section]\label{ured}
        The section
     \begin{equation}\label{usct}
       (s,\varphi_1,\varphi_2)
       \longmapsto
       (c e^{j\varphi_1},d e^{i\varphi_2}),
       \qquad 0<s<\pi/2,
     \end{equation}
        projects locally isometrically onto a totally geodesic submanifold of the circle quotient,
          with induced round quotient metric $ds^2+c^2d\varphi_1^2+d^2d\varphi_2^2$.
         The circle saturation of a regular section curve is $W_0$-minimal if and only if the curve is a geodesic of
     \begin{equation}\label{umet}
       h_{p,q}=A(s)^2
       \bigl(ds^2+c^2d\varphi_1^2+d^2d\varphi_2^2\bigr).
     \end{equation}
   \end{proposition}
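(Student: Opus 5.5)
The plan is to apply Theorem \ref{redc} with $d=1$ to the circle action \eqref{uact}, and then to show that geodesics of the reduced metric which start tangent to the section stay in it. Three steps are needed: horizontality of the section, identification of the reduced weight, and total geodesy of the section's image.

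\emph{Horizontality and the quotient metric.} At $y=(ce^{j\varphi_1},de^{i\varphi_2})$ the orbit direction is $\xi=(ce^{j\varphi_1}\mathbf i p,\ de^{i\varphi_2}\mathbf j q)$. The section tangents are
\[
\partial_s=(-de^{j\varphi_1},\,ce^{i\varphi_2}),\qquad
\partial_{\varphi_1}=(ce^{j\varphi_1}\mathbf j,\,0),\qquad
\partial_{\varphi_2}=(0,\,de^{i\varphi_2}\mathbf i).
\]
Each pairs to zero with $\xi$, because right multiplication by a unit quaternion is an isometry and $\langle x,x\xi'\rangle=0$ for imaginary $\xi'$. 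The three tangents are also mutually orthogonal, with squared lengths $1,c^2,d^2$. Hence the section is horizontal, transverse to the orbits, and $\pi$ restricted to it is a local isometry onto its image with metric $ds^2+c^2d\varphi_1^2+d^2d\varphi_2^2$.

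\emph{Reduced weight.} The orbit through $y$ has length $2\pi L(s)$, since $|\xi|^2=p^2c^2+q^2d^2$. The weight $W_0$ is $c^{k_1}d^{k_2}$ and is invariant. So in \eqref{orvl} we get $\widehat W=2\pi A(s)$. By Theorem \ref{redc} with $d=1$, the saturation of a curve is $W_0$-minimal if and only if its projection is a geodesic of $(2\pi)^2A^2g_B$. The constant factor does not change the geodesics.

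\emph{Total geodesy (main step).} It remains to see that the image of the section is totally geodesic in $(B,A^2g_B)$; then its geodesics are exactly the $h$-geodesics that lie in it, which is the metric \eqref{umet}. I will use a reflection. Let $\kappa_1(x)=\mathbf j x\mathbf j^{-1}$, which fixes $1,\mathbf j$ and negates $\mathbf i,\mathbf k$, and let $\kappa_2(x)=\mathbf i x\mathbf i^{-1}$, which fixes $1,\mathbf i$ and negates $\mathbf j,\mathbf k$. Set $\sigma=(\kappa_1,\kappa_2)$.

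Both $\kappa_1,\kappa_2$ are algebra automorphisms, so
\[
\sigma(ae^{\mathbf ip\vartheta},be^{\mathbf jq\vartheta})=\sigma(a,b)\cdot(-\vartheta).
\]
Thus $\sigma$ normalizes the action and descends to an isometry $\bar\sigma$ of $B$. It preserves $|a|,|b|$, hence $W_0$ and the orbit length, hence $\widehat W$, so $\bar\sigma$ is also an isometry of $h$.

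The fixed set of $\sigma$ in $\Sphere^7$ is the $3$-sphere $\{a\in\operatorname{span}\{1,\mathbf j\},\,b\in\operatorname{span}\{1,\mathbf i\}\}$, which contains the section. Its image lies in $\operatorname{Fix}(\bar\sigma)$, and components of fixed sets of isometries are totally geodesic.

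The point I expect to need care is that the component of $\operatorname{Fix}(\bar\sigma)$ through the image is exactly $3$-dimensional, not larger because of points with $\sigma(y)=y\cdot\vartheta$. To settle this I will compute $d\sigma$ on the horizontal space at $y$, which is $6$-dimensional. The $+1$ eigenspace contains the three section tangents. A direct check shows that the remaining horizontal directions, spanned by $\mathbf k$-type combinations orthogonal to $\xi$, are negated. So $d\bar\sigma$ has $+1$-eigenspace of dimension $3$, and locally the fixed component coincides with the image of the section.

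Combining the three steps proves the proposition.
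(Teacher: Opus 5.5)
Your proposal is correct and follows the paper's proof essentially step for step: the section tangents are orthogonal to the orbit vector, Theorem \ref{redc} gives $\widehat W=2\pi A$, and total geodesy comes from the involution $\sigma=(\kappa_1,\kappa_2)$. That involution is exactly the paper's $\tau$ in \eqref{uinv}, and you make the same check that the $+1$-eigenspace of the descended differential is precisely the projected section tangent space. One cosmetic point: the identity $\langle x,x\xi'\rangle=|x|^2\langle 1,\xi'\rangle=0$ uses that left multiplication by $x$ is conformal, not right multiplication, though both are isometries for unit quaternions, so nothing changes.
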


   \begin{proof}
        The tangent vectors of \eqref{usct} are orthogonal to the orbit vector $(pa i,qb j)$.
          Their inner products give the stated metric.
         The isometric involution
     \begin{equation}\label{uinv}
       \tau(a,b)=(j a(-j),i b(-i))
     \end{equation}
        fixes the section and conjugates the circle action to its inverse.
          It therefore descends to the quotient.
         Along the section,
           the positive eigenspace of its quotient differential is exactly the projected section tangent space.
        Thus the projected section is locally a component of its fixed set.
            The involution also preserves $W_0$ and $L$,
          so this fixed set is totally geodesic for $(W_0L)^2g_B$ as well.
         Theorem \ref{redc} now gives \eqref{umet} and the equivalence for all variations.
   \end{proof}

        The projected section is globally covered by
     $$
     (0,\pi/2)\times\R^2
     \longrightarrow (0,\pi/2)\times(\R^2/\Lambda_{p,q}),
     $$
        where the phase-return lattice is
     \begin{equation}\label{ulat}
       \Lambda_{p,q}=2\pi\Z^2+\pi(p,q)\Z.
     \end{equation}
        Indeed,
          an orbit meets the section again precisely when $p\vartheta,q\vartheta\in\pi\Z$.
         Coprimality forces $\vartheta\in\pi\Z$.

        Thus the local dynamics is the spiral phase-map system with radial density $A=W_0L$.
          The factor $L$ changes one axis anchor,
         while the lattice \eqref{ulat} changes the global closing and embeddedness problem.

\subsection{Modified axis anchors and quaternionic return}

        For unit-speed geodesics of \eqref{umet},
          set
     \begin{equation}\label{umom}
       \mu_1=A^2c^2\dot\varphi_1,
       \qquad \mu_2=A^2d^2\dot\varphi_2,
       \qquad
       U(s;\mu_1,\mu_2)
       =\frac{\mu_1^2}{A^2c^2}+\frac{\mu_2^2}{A^2d^2}.
     \end{equation}
        Both momenta are constant,
          and $A^2\dot s^2=1-U$.
         A regular well is an interval $(s_-,s_+)\Subset(0,\pi/2)$ on which $U<1$,
           bounded by two simple roots of $U=1$.
        Its phase increments over one complete radial oscillation are
     \begin{align}
       \Delta_1(\mu_1,\mu_2)
       &=2\int_{s_-}^{s_+}
       \frac{\mu_1\,ds}{Ac^2\sqrt{1-U}},\label{uphs}\\
       \Delta_2(\mu_1,\mu_2)
       &=2\int_{s_-}^{s_+}
       \frac{\mu_2\,ds}{Ad^2\sqrt{1-U}}.\label{uphi}
     \end{align}
        We use the unnormalized period map
     \begin{equation}\label{uper}
       \Pi(\mu_1,\mu_2)
       =\bigl(\Delta_1(\mu_1,\mu_2),\Delta_2(\mu_1,\mu_2)\bigr).
     \end{equation}
        This differs from the $2\pi$-normalized convention in \cite{LZ26} because the target lattice here is \eqref{ulat}.
        If the geodesic is launched from a turning point,
          then after $n$ complete radial oscillations it returns to its full state exactly when
     $$
     n\Pi(\mu_1,\mu_2)\in\Lambda_{p,q}.
     $$
        The least such $n$ is its radial closing order.
          This is the phase-return criterion of \cite[Section 3]{LZ26},
         now with a two-component period and the lattice \eqref{ulat}.

   \begin{lemma}[Modified spiral axis anchors]\label{uaxs}
        The signed period map $\Pi$ is real analytic near every regular well.
          Set $\alpha=k_1+1$,
         and $B(s)=A(s)\cos s$.
           Then $B$ has a unique maximum $B_*=B(s_0)$.
        Every $0<\mu<B_*$ defines a regular well at $(\mu_1,\mu_2)=(\mu,0)$,
            and
     \begin{equation}\label{uend}
       \lim_{\mu\downarrow0}\Delta_1(\mu,0)=\frac{\pi}{\alpha},
       \qquad
       \lim_{\mu\uparrow B_*}\Delta_1(\mu,0)
       =\frac{2\pi}{\cos s_0\sqrt\kappa}>\frac{\pi}{\alpha},
     \end{equation}
        where $\kappa=-(\log B)''(s_0)>0$.
   \end{lemma}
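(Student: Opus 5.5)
The plan is to reduce everything on the axis $\mu_2=0$ to one-variable calculus for $B$, and to treat analyticity by the standard regularization at simple turning points.

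\emph{Analyticity.} Near a regular well, $s_\pm(\mu_1,\mu_2)$ are simple roots of $U=1$, so they depend analytically on the momenta by the implicit function theorem. I would use the usual substitution $s=\tfrac12(s_++s_-)+\tfrac12(s_+-s_-)\sin\theta$, $\theta\in[-\pi/2,\pi/2]$. Since $1-U$ vanishes to exactly first order at each endpoint, $(1-U)/\cos^2\theta$ extends to a positive function analytic in $(\theta,\mu_1,\mu_2)$. The integrands of \eqref{uphs}--\eqref{uphi} then become analytic on a fixed compact interval. This is the argument of \cite[Sections 2--3]{LZ26}, unchanged: $A=c^{k_1}d^{k_2}L$ is analytic and positive on $(0,\pi/2)$, so the factor $L$ plays no role here.

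\emph{Unique maximum of $B$.} On the axis, $U=\mu^2/B^2$ with $B=c^{k_1+1}d^{k_2}L$. Put $x=\sin^2 s\in(0,1)$, which is monotone in $s$. Then
\[
\log B^2=\alpha\log(1-x)+k_2\log x+\log\bigl(p^2+(q^2-p^2)x\bigr).
\]
Each summand is concave in $x$ (the last one is the logarithm of a positive affine function), and the first two are strictly concave. Moreover $B\to0$ at both ends. Hence $\log B$ has a unique critical point $x_0$, which is a strict maximum, and $\partial_x^2\log B(x_0)<0$. Since $\partial_x\log B=0$ at $x_0$, the chain rule gives
\[
\kappa=-(\partial_x^2\log B)(x_0)\,(2s_0c_0)^2>0 .
\]
For $0<\mu<B_*$, the set $\{B>\mu\}$ is an interval bounded by two simple roots. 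The roots are simple because $B$ is strictly monotone on either side of $s_0$ (strict log-concavity in $x$ forces $(\log B)'\neq0$ away from $s_0$). So $\{B>\mu\}$ is a regular well.

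\emph{Endpoint limits.} On the axis the phase integrand simplifies to
\[
\Delta_1(\mu,0)=2\int_{s_-}^{s_+}\frac{\mu\,ds}{c\sqrt{B^2-\mu^2}} .
\]
As $\mu\uparrow B_*$, I would use $B^2\approx B_*^2\bigl(1-\kappa(s-s_0)^2\bigr)$ together with the analytic regularization above to get the limit $\frac{2\mu}{c_0}\cdot\frac{\pi}{B_*\sqrt\kappa}\to 2\pi/(c_0\sqrt\kappa)$.

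As $\mu\downarrow0$, split the integral at a fixed interior point.
\begin{itemize}
\item Near $s=0$ we have $c\approx1$ and $B\sim Cs^{k_2}$, so this piece is $O(\mu^{1/k_2})$, or $O(\mu|\log\mu|)$ when $k_2=1$; in either case it tends to $0$.
\item The middle piece is $O(\mu)$.
\item Near $s=\pi/2$, put $t=\pi/2-s$, so $c\sim t$ and $B\sim Kt^{\alpha}$. The substitution $w=Kt^\alpha/\mu$ turns this piece into
\[
\frac{2}{\alpha}\int_1^\infty\frac{dw}{w\sqrt{w^2-1}}=\frac{\pi}{\alpha}.
\]
\end{itemize}
Dominated convergence, after the rescaling, justifies each of these limits.

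\emph{Main obstacle: the strict inequality $2\pi/(c_0\sqrt\kappa)>\pi/\alpha$.} This is equivalent to $\kappa\cos^2 s_0<4\alpha^2$. Write $\delta=q^2-p^2$ and $P=p^2+\delta x_0$. In the $x$ variable one gets
\[
\kappa(1-x_0)=2\alpha x_0+\frac{2k_2(1-x_0)^2}{x_0}+\frac{2x_0(1-x_0)^2\delta^2}{P^2},
\]
subject to the critical relation $\alpha/(1-x_0)=k_2/x_0+\delta/P$. I would first eliminate $k_2/x_0$ using this relation, which gives
\[
\kappa(1-x_0)=2\alpha x_0+2(1-x_0)\alpha-\frac{2(1-x_0)^2\delta}{P}+\frac{2x_0(1-x_0)^2\delta^2}{P^2},
\]
and then bound the two $\delta$-terms using $|\delta x_0|<P$ and $|\delta|(1-x_0)<P+|\delta|\ldots$ (i.e.\ $P\ge\min(p^2,q^2)$ and $\delta x_0/P\in(-1,1)$). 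The target is $\kappa(1-x_0)\le 2\alpha+O(1)<4\alpha^2$, using $\alpha\ge2$. If that crude bound is insufficient in the range $|q|\gg|p|$, I would treat the quadratic in $\delta/P$ exactly: writing $y=(1-x_0)\delta/P$, the correction term is $2y(x_0y-(1-x_0))$, and I would maximize it over the admissible range of $y$. This comparison is the genuinely new point relative to \cite{LZ26}, where $L$ is constant.
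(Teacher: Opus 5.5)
Your route is the paper's: log-concavity of $B$ in $x=\sin^2 s$, the wall asymptotics with exponents $k_2$ and $\alpha$, and the coalescing-root limit. Your formula
\[
\kappa(1-x_0)=2\alpha-\frac{2(1-x_0)^2\delta}{P}+\frac{2x_0(1-x_0)^2\delta^2}{P^2}
\]
is correct. It agrees with the paper's $E=2(\alpha+1-\lambda)-2\lambda(1-\lambda)/\sin^2 s_0$, where $\lambda=q^2x_0/P$. One minor slip: the chain-rule factor is $(2\sin s_0\cos s_0)^2=4x_0(1-x_0)$, not $(2s_0c_0)^2$.

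The gap is the final inequality $E<4\alpha^2$, which you leave open. The crude bound you propose does not work as stated. The claim $|\delta x_0|<P$ is false when $q^2<p^2$. In that case $P=p^2-|\delta|x_0$, so the claim fails as soon as $2|\delta|x_0>p^2$. This happens at actual critical points: $p=10$, $q=1$, $k_1=1$, $k_2=10$ gives $x_0\approx0.77$. The other ingredient, $P\ge\min(p^2,q^2)$, only gives a correction bounded by something growing like $p^2/q^2$. That cannot beat $4\alpha^2=16$ when $\alpha=2$. You also have the risky regime backwards. For $|q|\ge|p|$ the correction is nonpositive; the positive contribution comes from $|p|>|q|$.

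The missing observation is the identity $x_0\delta-P=-p^2$. It collapses the two $\delta$-terms to
\[
E=2\alpha-\frac{2p^2(1-x_0)^2\,\delta}{P^2}.
\]
If $\delta\ge0$, this gives $E\le2\alpha$. If $\delta<0$, use $P=p^2(1-x_0)+q^2x_0>p^2(1-x_0)$ and $|\delta|=p^2-q^2<p^2$. These give $E<2\alpha+2\le4\alpha^2$, which is exactly the paper's bound $E<2(\alpha+1)$.

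In the terms of your fallback, the admissible range is $-1<y<(1-x_0)/x_0$. The lower bound $y>-1$ is equivalent to $\delta+p^2=q^2>0$. The convex quadratic $2y\bigl(x_0y-(1-x_0)\bigr)$ is $\le0$ for $y\ge0$. Its vertex lies at $y=(1-x_0)/(2x_0)>0$, so on $(-1,0)$ it stays below its value $2$ at $y=-1$. Without this lower bound on $y$, maximizing over "the admissible range" does not close the argument, since the quadratic is unbounded as $y\to-\infty$.
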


   \begin{proof}
        Set $\beta=k_2$ and $x=\sin^2s$.
          The strict concavity
     \begin{equation}\label{ucon}
       \frac{d^2}{dx^2}\log B
       =-\frac{\alpha}{2(1-x)^2}-\frac{\beta}{2x^2}
       -\frac{(q^2-p^2)^2}{2[p^2+(q^2-p^2)x]^2}<0,
     \end{equation}
        gives the unique nondegenerate maximum and the regular axis wells.
          The fixed-interval regularization and endpoint argument of
         \cite[Lemma A.1, Proposition 3.2, and Appendix A.3]{LZ26} apply to $B$.
        At the two walls,
     $$
       B(s)=|p|s^\beta(1+O(s^2)),
       \qquad
       B(\tfrac\pi2-y)=|q|y^\alpha(1+O(y^2)).
     $$
        The wall and coalescing-root formulas give both limits in \eqref{uend};
          the first wall contributes zero and the factor $|q|$ cancels at the second.

        To compare the anchors,
          set
     $$
       \lambda=
       \frac{q^2\sin^2s_0}{p^2\cos^2s_0+q^2\sin^2s_0}\in(0,1).
     $$
        The critical-point equation for $B$ together with \eqref{ucon} yields
     \begin{equation}\label{ubdg}
       E:=\cos^2s_0\,\kappa
       =2(\alpha+1-\lambda)
       -\frac{2\lambda(1-\lambda)}{\sin^2s_0}
       <2(\alpha+1)\leq4\alpha^2.
     \end{equation}
        This proves the strict inequality in \eqref{uend}.
          This cancellation explains why the first anchor is independent of $p,q$.
         If $|p|=|q|$,
           then $L$ is constant and the second anchor is $\pi\sqrt{2/\alpha}$,
        recovering \cite[Lemma 3.4]{LZ26}.
   \end{proof}

        The preceding lemmas provide the analytic input for local inversion of the period map.
          It remains to choose primitive lattice returns that force both least closing order and embeddedness.

   \begin{theorem}[Quaternionic lattice closing]
   \label{urnk}
        For every $k_1,k_2>0$ and every nonzero coprime pair $p,q$,
          every sufficiently large integer $N$ occurs as the least radial closing order of a nonconstant embedded quotient geodesic.
           Its circle pullback is an embedded $W_0$-minimal coefficient torus.
   \end{theorem}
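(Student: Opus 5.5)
The plan is to work near the axis family $(\mu,0)$ of Lemma \ref{uaxs} and perturb into the two-momentum region, where we invert the period map onto primitive lattice returns. We proceed in four steps.

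\textbf{Step 1: choose a target and make $\Pi$ locally invertible.} By Lemma \ref{uaxs}, $\Delta_1(\mu,0)$ varies continuously from $\pi/\alpha$ to $2\pi/(\cos s_0\sqrt\kappa)$, so some interior $\mu_*$ has $\partial_\mu\Delta_1(\mu_*,0)\neq0$. This follows from analyticity and the strict inequality of the two anchors, since a constant function would contradict \eqref{uend}. At $\mu_2=0$ we have $\Delta_2=0$. For small $\mu_2$ we use the symmetry $\Delta_2(\mu_1,-\mu_2)=-\Delta_2(\mu_1,\mu_2)$ and compute $\partial_{\mu_2}\Delta_2(\mu_*,0)=2\int ds/(Ad^2\sqrt{1-U})>0$. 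The Jacobian of $\Pi$ at $(\mu_*,0)$ is then diagonal with nonzero entries, so $\Pi$ is a real-analytic local diffeomorphism from a neighborhood $\mathcal U$ of $(\mu_*,0)$ onto a neighborhood $\mathcal V$ of $(\Delta_*,0)$, where $\Delta_*=\Delta_1(\mu_*,0)$. Shrinking if needed, we may also assume $\Delta_*/2\pi\notin\mathbb Q$ is not required. We only need $\Delta_*\notin\pi\mathbb Q$ generically, or we simply work with nearby values.

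\textbf{Step 2: primitive lattice returns of exact order $N$.} For large $N$ we seek $\Pi(\mu_1,\mu_2)=\omega/N$ with $\omega\in\Lambda_{p,q}$ and $\omega/N\in\mathcal V$. We require that $n\omega/N\notin\Lambda_{p,q}$ for all $0<n<N$, which means the class of $\omega$ in $\Lambda_{p,q}/N\Lambda_{p,q}\cong(\Z/N)^2$ has order exactly $N$. The lattice $\Lambda_{p,q}$ has covolume $2\pi^2$, so its points in the ball of radius $N\cdot\operatorname{diam}\mathcal V$ about $N(\Delta_*,0)$ number $\sim cN^2$. Taking $\omega=\omega_0+e$, with $e$ a basis vector of $\Lambda_{p,q}$ and $\omega_0\in N\Lambda_{p,q}$ near $N(\Delta_*,0)$, gives order exactly $N$, because $e$ is primitive mod $N$. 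Since $\mathcal V$ has fixed size, this works for every $N\ge N_0$. We also arrange that the second coordinate of $\omega$ is nonzero, so that $\mu_2\neq0$ and the geodesic is not the axis (degenerate) solution. This is possible by choosing $\omega$ off the horizontal line, which keeps it within $O(1/N)$ of the axis. Step 1 then gives a unique $(\mu_1,\mu_2)\in\mathcal U$, and the geodesic launched from a turning point closes in the quotient $(0,\pi/2)\times\R^2/\Lambda_{p,q}$ with least radial closing order exactly $N$.

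\textbf{Step 3 (main obstacle): embeddedness.} Near $\mu_2=0$ the curve is a small perturbation of an axis geodesic confined to a thin phase strip. Over each radial oscillation, $\varphi_1$ advances monotonically by $\approx\Delta_*$, since $\dot\varphi_1$ has the sign of $\mu_1$. Meanwhile $\varphi_2$ advances monotonically by $\omega_2/N$, since $\dot\varphi_2$ has the sign of $\mu_2$. The strategy is to parametrize by the monotone phase $\varphi_2$. The curve is then a graph $(s,\varphi_1)$ over $\varphi_2$ on each sheet, and in the quotient torus the $N$ oscillations occupy disjoint $\varphi_2$-windows, modulo the lattice identification. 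Self-intersections could only come from two visits with equal $(s,\varphi_1,\varphi_2)$ mod $\Lambda_{p,q}$. This would force a phase difference $j\Pi\in\Lambda_{p,q}$ up to a half-oscillation shift, plus a turning-point reflection. The shift is excluded by the exact-order choice together with the transversality of the $\varphi_1$ winding, and this is where the extra generator $\pi(p,q)$ in \eqref{ulat} must be handled. This mirrors the embeddedness argument of \cite[Section 3]{LZ26}, adapted to the lattice \eqref{ulat}. We expect verifying that no half-period coincidence arises for all large $N$ to be the delicate point. If needed, we would restrict the choice of $\omega$ further, for instance by parity conditions on its coordinates.

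\textbf{Step 4: lift to the torus.} The closed embedded quotient curve lies in the single-orbit-type region $ab\neq0$ and in the totally geodesic section of Proposition \ref{ured}. Its circle pullback is therefore a compact embedded surface. It is a torus because it is a circle bundle over a circle, and it is orientable by the continuity of the free action. By Proposition \ref{ured} together with Theorem \ref{redc}, it is $W_0$-minimal.
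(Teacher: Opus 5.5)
Your Step 1 is sound and is what the paper does. At an axis point the Jacobian of $\Pi$ is diagonal, with $\partial_{\mu_1}\Delta_1\neq0$ because the anchors of Lemma \ref{uaxs} differ, and $\partial_{\mu_2}\Delta_2>0$. The paper only uses a one-sided neighborhood in $\{\mu_2>0\}$, but your two-sided version is harmless.

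The proof breaks in Steps 2 and 3. In Step 2, the choice $\omega=\omega_0+e$ with $\omega_0\in N\Lambda_{p,q}$ is incompatible with $\omega/N\in\mathcal V$. Writing $\omega_0=N\lambda$ with $\lambda\in\Lambda_{p,q}$ gives $\omega/N=\lambda+e/N$. This lies within $O(1/N)$ of a lattice point, hence at distance about $\operatorname{dist}((\Delta_*,0),\Lambda_{p,q})$ from $(\Delta_*,0)$. That distance is not small unless $\Delta_*$ is itself a horizontal lattice value (a multiple of $\pi$ or $2\pi$), and nothing in Lemma \ref{uaxs} provides this. The lattice-point count alone does not exhibit a class of exact order $N$ inside the small ball. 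In Step 3 you leave embeddedness open, and for an arbitrary nonzero $\omega_2$ it is not automatic. If the total $\varphi_2$-advance is a multiple of the minimal one, the curve winds several times in the $\varphi_2$-direction of the quotient. Coincidences between a point and the reflection of another point about a turning point then have to be excluded. Since the phases are odd about turning points, these amount to $2\varphi(t)\in n\Pi+\Lambda_{p,q}$, and exact order does not rule this out.

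The missing idea, which settles both points at once, is to prescribe the second component of the return vector. It should be the positive generator $2\pi/r_q$ of the projection of $\Lambda_{p,q}$ to the second coordinate, which is $(2\pi/r_q)\Z$ with $r_q=1$ for even $q$ and $r_q=2$ for odd $q$.

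\begin{itemize}
\item \emph{Choice of target.} Lattice points with this second component exist with every first component in $\varepsilon_q\pi p+2\pi\Z$, where $\varepsilon_q=0$ or $1$ as $q$ is even or odd; for odd $q$ use $\pi(p,q)+2\pi(m,(1-q)/2)$. These are spaced by $2\pi$, so you can take $N\Pi=(2\pi m_N+\varepsilon_q\pi p,\,2\pi/r_q)$ with first component within $\pi$ of $N\Delta_*$. The target $\omega/N$ then lies in $\mathcal V$ for large $N$, and its preimage has $\mu_2>0$.
\item \emph{Least order.} The character $\chi_q=\exp(ir_q\varphi_2)$ is well defined on $(0,\pi/2)\times(\R^2/\Lambda_{p,q})$ and has winding exactly one along $N\Pi$. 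If $j\Pi\in\Lambda_{p,q}$ for some $0<j<N$, the winding along $j\Pi$ would be the non-integer $j/N$. So $N$ is the least closing order.
\item \emph{Embeddedness.} Since $\dot\varphi_2>0$, $\chi_q\circ\gamma_N$ is a degree-one local diffeomorphism of circles, hence injective, so $\gamma_N$ is embedded. This is exactly your idea of parametrizing by $\varphi_2$, which works cleanly at winding one.
\end{itemize}

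Step 4 then goes through as you wrote it.
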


   \begin{proof}
        The parity and full-rank argument of \cite[Theorem 3.5]{LZ26},
          together with the distinct anchors in Lemma \ref{uaxs},
         gives an axis point $(\bar\mu_1,0)$ and a one-sided neighborhood $\mathcal U\subset\{\mu_2>0\}$ on which $\Pi$ is a diffeomorphism onto its image.
        Indeed,
          the transverse derivative is positive and $\Delta_1(\mu,0)$ is nonconstant.
         It remains to impose the quaternionic return lattice.

        Put $\eta_*=\Delta_1(\bar\mu_1,0)$ and define
     $$
     r_q=\begin{cases}1,&q\text{ even},\\2,&q\text{ odd},\end{cases}
     \qquad
     \varepsilon_q=\begin{cases}0,&q\text{ even},\\1,&q\text{ odd}.
     \end{cases}
     $$
        For each large $N$ choose $m_N\in\Z$ so that
          $|(2\pi m_N+\varepsilon_q\pi p)/N-\eta_*|\leq\pi/N$,
         and set
     $$
     \eta^{(N)}=
     \left(\frac{2\pi m_N+\varepsilon_q\pi p}{N},
     \frac{2\pi}{r_qN}\right).
     $$
        Then $\eta^{(N)}\to(\eta_*,0)$ from the positive side,
          so $\mu^{(N)}=\Pi^{-1}(\eta^{(N)})$ is defined for large $N$.
        Moreover,
          $N\eta^{(N)}\in\Lambda_{p,q}$;
         for odd $q$ this is
         $N\eta^{(N)}=\pi(p,q)+2\pi(m_N,(1-q)/2)$.

        The quotient character
     $$
       \chi_q([s,\varphi_1,\varphi_2])
       =\exp(i r_q\varphi_2)
     $$
        is well defined on $(0,\pi/2)\times(\R^2/\Lambda_{p,q})$,
          and its winding on $N\eta^{(N)}$ is one.
        If $j\eta^{(N)}\in\Lambda_{p,q}$ for $0<j<N$,
          the same character would have integer winding $j/N$,
         a contradiction.
        Thus $N$ is the least radial closing order.

        Let $\gamma_N$ be the resulting closed quotient geodesic.
          Since $\dot\varphi_2>0$ and $\chi_q\circ\gamma_N$ has degree one,
         this composition is a diffeomorphism and $\gamma_N$ is embedded.

        The free circle pullback is an embedded principal $S^1$-bundle over $S^1$,
          hence an embedded torus.
         Proposition \ref{ured} gives $W_0$-minimality.
   \end{proof}

        When $|p|\neq|q|$,
          these tori are not obtained by iterating the standard common-phase spiral products.

        For closed $\mathbb H$-horizontal minimal inputs $f_i:M_i^{k_i}\to\Sphere(\mathbb H^{N_i})$,
          each torus in Theorem \ref{urnk} gives,
         by Theorem \ref{comp},
           the closed minimal immersion
     $$
     \mathbb T^2\times M_1\times M_2\longrightarrow
     \Sphere(\mathbb H^{N_1+N_2}),
     \qquad (y,x_1,x_2)\longmapsto(f_1(x_1)a(y),f_2(x_2)b(y)).
     $$
        The non-tensor inputs $f_{\ell_i}$ of Example \ref{qclif} give source $\mathbb T^{\ell_1+\ell_2}$.

\section{Fixed-weight doublings and multi-block profiles}
\label{part}

        In this section we aim to apply the symmetric doubling construction of Kapouleas--McGrath \cite{KM23} to a critical orbit torus in a weighted three-sphere.
          The construction is modular.
         The external theorem produces embedded doublings once its geometric and Jacobi hypotheses are verified.
           We verify those hypotheses below,
        retain prescribed finite angular symmetries,
          and then lift the resulting surfaces to multi-block coefficient profiles.

\subsection{The external doubling module}

   \begin{theorem}[Kapouleas--McGrath module used here]\label{kmmod}
        In the symmetric torus setting of \cite[Convention 2.1, Assumption 7.2, Theorems 9.39--9.40]{KM23},
          the input needed below can be organized as follows:
     \begin{enumerate}
     \item[(KM1)] a closed orientable embedded two-sided minimal torus in a Riemannian three-manifold;
     \item[(KM2)] an effective isometric symmetry group containing an orientation-reversing symmetry of the torus;
     \item[(KM3)] positive Jacobi potential $|A|^2+\operatorname{Ric}(\nu,\nu)$;
     \item[(KM4)] trivial Jacobi kernel on the invariant function space;
     \item[(KM5)] admissible linearized-doubling configurations whose bridge number tends to infinity.
     \end{enumerate}
        Under these conditions,
          the cited theorems give connected closed embedded minimal surfaces which double the torus,
         converge as varifolds to two copies of it,
           and have genus tending to infinity.
   \end{theorem}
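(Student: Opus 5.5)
This statement is a packaging of external results, so the proof is a bookkeeping argument: locate each of (KM1)--(KM5) among the hypotheses of \cite[Convention 2.1, Assumption 7.2, Theorems 9.39--9.40]{KM23}, and read off the conclusions from those theorems. I would not reprove any gluing estimate. The plan is to list the hypotheses of the cited theorems in order and match each one to an item of the module. The one thing that needs an argument is that nothing is lost when several of their assumptions are grouped into a single item.

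First, the geometric setting. Convention 2.1 of \cite{KM23} fixes a closed embedded two-sided minimal surface $\Sigma$ in a Riemannian $3$-manifold $(N,g)$, together with a finite group $\mathscr G$ of isometries preserving $\Sigma$. In the torus case this is (KM1). Orientability and two-sidedness together give a global unit normal $\nu$, which is needed to define the doubling graphs $\pm\varphi\,\nu$. The existence of an orientation-reversing element of $\mathscr G$, which the symmetric construction uses to swap the two sheets, is (KM2). Effectiveness of the action is needed so that invariant function spaces are the ones used in their Assumption 7.2.

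Second, the analytic hypotheses. Assumption 7.2 asks that the Jacobi operator $\mathcal L=\Delta+|A|^2+\operatorname{Ric}(\nu,\nu)$ have positive potential, which is (KM3); this is what makes the linearized doubling (LD) solutions have the logarithmic behaviour used in the matching. It also asks that $\mathcal L$ have trivial kernel on $\mathscr G$-invariant functions, which is (KM4). The remaining hypothesis of Theorems 9.39--9.40 is a family of admissible LD configurations, that is, symmetric point sets with the required balancing and unbalancing estimates, whose cardinality tends to infinity. That is (KM5).

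Finally, the conclusion. The theorems produce, for each large configuration, a closed embedded minimal surface that is $\mathscr G$-invariant and built from two perturbed copies of $\Sigma$ joined by catenoidal bridges, one for each point of the configuration. These surfaces converge as varifolds to $2\Sigma$. Connectedness holds because the two sheets are joined by at least one bridge. For the genus, an Euler characteristic count gives $\chi=2\chi(\Sigma)-2n=-2n$ for a torus with $n$ bridges, so the genus is $n+1\to\infty$.

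The main obstacle is the matching step. The convention in \cite{KM23} may phrase some hypotheses differently from how they are grouped here; for example, the positivity in (KM3) might be required only on part of $\Sigma$, or the kernel condition in (KM4) might be stated modulo a finite-dimensional obstruction space. I would check that each grouped condition implies the cited one, and is not merely implied by it. The actual verification of (KM1)--(KM5) for the weighted Clifford-type torus is left to the subsequent section, as the statement indicates.
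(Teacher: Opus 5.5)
Your proposal takes essentially the same approach as the paper. The paper gives no independent proof of Theorem~\ref{kmmod}: it uses it only as an external implication from \cite{KM23}, checks (KM1)--(KM4) in Lemma~\ref{pkm}, takes (KM5) from the constant configuration vectors of \cite[Theorem 9.39 and Remark 7.12(iv)]{KM23}, and uses your count $\operatorname{genus}=1+|L_\nu|$ in Theorem~\ref{pdbl}.

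One gloss is off. In this setting the orientation-reversing symmetry of (KM2) is the reflection $\Phi\mapsto-\Phi$. It fixes $s$, so it reverses the orientation of the torus while preserving its two sides; it does not swap the two sheets.
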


        Theorem \ref{kmmod} is used only as an external implication.
          The next subsection verifies (KM1)--(KM4) directly in our metric.
         Condition (KM5) will be supplied by the constant configuration vectors allowed in \cite[Theorem 9.39 and Remark 7.12(iv)]{KM23}.

\subsection{Verification in the weighted three-sphere}

        Let $a,b>0$,
          $D=a+b$,
         and use coordinates $(z,w)=(\cos s\,e^{i\theta},\sin s\,e^{i\varphi})$.
           Set
     \begin{equation}\label{pmet}
       g_{a,b}=\rho(s)g_{\Sphere^3},
       \qquad \rho(s)=\cos^{a-1}s\sin^{b-1}s.
     \end{equation}
        The angular tori have area proportional to $\cos^a s\sin^b s$ in this metric,
          with unique critical torus
     \begin{equation}\label{prad}
       T_*=\{|z|^2=a/D,\ |w|^2=b/D\}.
     \end{equation}
        Define $\mathcal R_n=\{\zeta\in\C:\zeta^n=1\}$,
          choose $s_*$ by $\cos^2s_*=a/D$,
         and set $\rho_* =\rho(s_*)$.
        The main local issue is the absence of invariant Jacobi fields after a finite angular quotient.
            With the convention $\Delta e^{in\theta}=-n^2 e^{in\theta}$,
          the Jacobi operator of $T_*$ is
     \begin{equation}\label{pjac}
       J_*=
       \frac D{\rho_*}
       \left(
       \frac1a\partial_{\theta\theta}
       +\frac1b\partial_{\varphi\varphi}+2
       \right).
     \end{equation}
        Indeed,
          the induced metric is $\rho_*\bigl((a/D)d\theta^2+(b/D)d\varphi^2\bigr)$.
         The Jacobi potential is constant by angular symmetry.
           Differentiating the mean curvature of the parallel tori gives
     \begin{equation}\label{ppot}
       |A|^2+\operatorname{Ric}(\nu,\nu)
       =-\rho_*^{-1}
       \left.\frac{d^2}{ds^2}\log(\cos^a s\sin^b s)\right|_{s=s_*}
       =\frac{2D}{\rho_*}>0.
     \end{equation}

   \begin{lemma}[Verification of the geometric input]\label{pkm}
        There is an invariant tube $U_0\Subset\{zw\neq0\}$ around $T_*$ with the following property.
          Given an invariant tube $T_*\subset U\Subset U_0$ and positive integers $r,t$,
         one can choose $Q_1,Q_2$ with $r\mid Q_1$,
           $t\mid Q_2$,
        so that the quotient pair
     $$
     (\bar T_*,\bar U)=
     (T_*,U)/(\mathcal R_{Q_1}\times\mathcal R_{Q_2})
     $$
        satisfies (KM1)--(KM4) of Theorem \ref{kmmod}.
   \end{lemma}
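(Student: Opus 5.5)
I will verify the four conditions one at a time, using the explicit Jacobi operator \eqref{pjac} and the potential computation \eqref{ppot}.

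\textbf{(KM1): the torus and its quotient.} The torus $T_*$ is minimal for $g_{a,b}$. Indeed, the parallel angular tori $\{s=\text{const}\}$ have $g_{a,b}$-area proportional to $\cos^a s\sin^b s$, and by \eqref{prad} the radius $s_*$ is its unique critical point. By angular symmetry the mean curvature of each parallel torus is constant, so at $s_*$ it must vanish. The group $\mathcal R_{Q_1}\times\mathcal R_{Q_2}$ acts freely by the rotations $(z,w)\mapsto(\zeta_1 z,\zeta_2 w)$ on $\{zw\neq0\}$, and it preserves $g_{a,b}$ since $\rho$ depends only on $s$. Hence the quotient of any invariant tube $U\Subset U_0$ is a smooth Riemannian three-manifold. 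The quotient $\bar T_*$ is an embedded torus, and it is two-sided because the unit normal $\partial_s/|\partial_s|$ is invariant.

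I take $U_0$ to be a tube $\{|s-s_*|<\epsilon\}$ bounded away from $zw=0$. The only role of shrinking is to keep the tube inside the region where the geometry is controlled.

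\textbf{(KM2): symmetries.} I will exhibit an effective symmetry group containing orientation-reversing symmetries of $\bar T_*$. The reflections $\theta\mapsto-\theta$ (that is, $z\mapsto\bar z$) and $\varphi\mapsto-\varphi$ preserve $g_{a,b}$ and normalize the rotation group. They therefore descend to the quotient, and each reverses the orientation of the torus.

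Together with the residual rotations $(\mathcal R_{N}\times\mathcal R_{N'})/(\mathcal R_{Q_1}\times\mathcal R_{Q_2})$, the swap of sides $s-s_*\mapsto$ is not needed. I will match these to the symmetric torus setting of \cite[Convention 2.1]{KM23}, taking the full group to be generated by the two reflections together with the rotation subgroup that the later configurations require.

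\textbf{(KM3): positive potential.} This is \eqref{ppot} directly: the potential equals $2D/\rho_*>0$. I will check the identity with the Riccati/variation formula for the mean curvature of parallel tori, $\partial_s H=-(|A|^2+\operatorname{Ric}(\nu,\nu))|\partial_s|$ up to the conformal factor, combined with $H(s)\propto-\frac{d}{ds}\log(\cos^a s\sin^b s)$.

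\textbf{(KM4): trivial invariant Jacobi kernel.} This is the main step. Functions on $\bar T_*$ are exactly the functions on $T_*$ invariant under $\mathcal R_{Q_1}\times\mathcal R_{Q_2}$. In Fourier modes these are spanned by $e^{i(jQ_1\theta+lQ_2\varphi)}$. Applying \eqref{pjac}, each such mode is an eigenfunction with eigenvalue
$$\frac{D}{\rho_*}\Bigl(2-\frac{j^2Q_1^2}{a}-\frac{l^2Q_2^2}{b}\Bigr).$$

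The mode $j=l=0$ gives $2D/\rho_*\neq0$. For every other mode, the eigenvalue is nonzero as soon as $Q_1^2>2a$ and $Q_2^2>2b$, since then the bracket is negative. So it suffices to choose $Q_1$ to be a multiple of $r$ with $Q_1^2>2a$, and $Q_2$ to be a multiple of $t$ with $Q_2^2>2b$.

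The only obstacle is careful bookkeeping. First, the convention $\Delta e^{in\theta}=-n^2e^{in\theta}$ must be consistent with the induced metric $\rho_*\bigl((a/D)d\theta^2+(b/D)d\varphi^2\bigr)$. Second, if the symmetry group of (KM2) imposes additional parity conditions, these only shrink the invariant space further, so the kernel remains trivial. Finally, taking $Q_i$ larger keeps every quotient inside $\bar U$, which completes the verification.
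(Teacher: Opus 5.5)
Your argument is correct. Your route to (KM4) differs from the paper's.

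\textbf{The paper's route.} The paper puts the full coordinate $O(2)$-action in $\Theta=Q_1\theta$, together with the involution $\Phi\mapsto-\Phi$, into the symmetry group. Invariant functions then depend only on $\Phi$ and are even. On $\cos(n\Phi)$ the Jacobi operator has eigenvalue $D\rho_*^{-1}(2-Q_2^2n^2/b)$. So only $Q_2^2>2b$ is needed, and $Q_1$ may be any multiple of $r$, even $Q_1=r$.

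\textbf{Your route.} You impose both $Q_1^2>2a$ and $Q_2^2>2b$. You then show that $J_*$ is invertible on all of $C^\infty(\bar T_*)$, because every mode $e^{i(jQ_1\theta+lQ_2\varphi)}$ with $(j,l)\neq(0,0)$ has a negative bracket.

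\textbf{What each buys.} Your version gives (KM4) for whatever symmetry group the Kapouleas--McGrath construction ends up using. The cost is a harmless largeness condition on $Q_1$. The paper's version is leaner, but it genuinely relies on the continuous rotation in $\Theta$ to discard the $\Theta$-modes. Those modes can lie in the kernel on $\bar T_*$ when $Q_1^2\leq 2a$. For example, with $a=Q_1=2$, as can occur in Corollary~\ref{dext} with $\delta_1=2$, the mode $\cos\Theta$ has eigenvalue $0$.

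\textbf{(KM2) needs cleaning up.} That paragraph contains an incomplete sentence about residual rotations with undefined $\mathcal R_N\times\mathcal R_{N'}$, and it leaves the group unspecified. Your two reflections do literally satisfy (KM2) as formulated in Theorem~\ref{kmmod}. However, the symmetric-torus setting invoked in Theorem~\ref{pdbl} uses the rotational $O(2)$ in $\Theta$, since it works with parallel concentration circles and constant configuration vectors. You should therefore name that group explicitly. Its rotations commute with $\mathcal R_{Q_1}\times\mathcal R_{Q_2}$, so they descend to $\bar U$ isometrically and act effectively there. By your full-space invertibility, adding them costs nothing in (KM4).

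Your closing remark about larger $Q_i$ keeping quotients inside $\bar U$ plays no role and can be dropped.
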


   \begin{proof}
        Choose positive multiples $Q_1$ of $r$ and $Q_2$ of $t$ with $Q_2^2>2b$,
          and define $G_0=\mathcal R_{Q_1}\times\mathcal R_{Q_2}$.
          The action is free on $U$.
         With quotient coordinates $\Theta=Q_1\theta$ and $\Phi=Q_2\varphi$,
           the coordinate $O(2)$-action in $\Theta$ and the involution $\Phi\mapsto-\Phi$ act effectively and isometrically on $(\bar T_*,\bar U)$.
        The involution reverses the orientation of $\bar T_*$.
          An invariant function depends only on $\Phi$ and is even.
         On this space the Jacobi operator is
     $$
     \frac D{\rho_*}
     \left(\frac{Q_2^2}{b}\partial_{\Phi\Phi}+2\right),
     $$
        with eigenvalues $D\rho_*^{-1}(2-Q_2^2n^2/b)$ on $\cos(n\Phi)$.
          None vanishes because $Q_2^2>2b$.

        The radial unit normal descends,
          so $\bar T_*$ is two-sided.
         It is closed,
           orientable,
        embedded,
          and minimal.
         Equation \eqref{ppot} gives positive Jacobi potential.
           Hence (KM1)--(KM4) hold.
        The ambient manifold may be the open three-manifold $\bar U$ by \cite[Convention 2.1]{KM23}.
          Shrinking a fixed invariant tubular neighborhood if necessary makes all normal data uniform.
   \end{proof}

   \begin{theorem}[Outer doublings]\label{pdbl}
        For every $a,b>0$,
          there is an invariant tubular neighborhood $U_0\Subset\{zw\neq0\}$ of $T_*$ such that,
         for every invariant open tube $T_*\subset U\Subset U_0$ under the angular rotations and reflections,
           and every pair of positive integers $r,t$,
        there is a sequence of connected closed embedded minimal surfaces $\Sigma_j\subset(U,g_{a,b})$ invariant under
     \begin{equation}\label{psym}
       (z,w)\longmapsto(\xi z,\eta w),
       \qquad
       \xi^r=\eta^t=1.
     \end{equation}
        They double $T_*$,
          their genera tend to infinity,
         and their varifolds converge to $2|T_*|$.
   \end{theorem}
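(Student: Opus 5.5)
Fix $a,b>0$ and take $U_0$ to be the invariant tube supplied by Lemma \ref{pkm}. Given an invariant tube $T_*\subset U\Subset U_0$ and integers $r,t$, Lemma \ref{pkm} produces multiples $Q_1$ of $r$ and $Q_2$ of $t$ with $Q_2^2>2b$. For these, the quotient pair $(\bar T_*,\bar U)=(T_*,U)/G_0$, with $G_0=\mathcal R_{Q_1}\times\mathcal R_{Q_2}$, satisfies (KM1)--(KM4). It remains to supply (KM5) and then lift the quotient doublings back to $U$. Since $r\mid Q_1$ and $t\mid Q_2$, the group \eqref{psym} is a subgroup of $G_0$. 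Hence any $G_0$-invariant surface in $U$ is automatically invariant under \eqref{psym}.

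For (KM5), I would use the constant configuration vectors of \cite[Theorem 9.39 and Remark 7.12(iv)]{KM23}. The plan is to place $m$ equally spaced catenoidal bridges along the $\Theta$-circle of $\bar T_*$ and let $m\to\infty$; these are the $O(2)$-invariant in $\Theta$ configurations. The symmetry group of the quotient contains $\mathcal R_m$ in $\Theta$ together with the reflection $\Phi\mapsto-\Phi$. This reflection reverses the orientation of $\bar T_*$, and it is what selects the outer, even doubling. Because the Jacobi potential \eqref{ppot} is the positive constant $2D/\rho_*$ and the invariant kernel is trivial by Lemma \ref{pkm}, the linearized-doubling equation on the even $\Phi$-invariant space should be uniquely solvable. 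The unbalancing and matching conditions then reduce to a single scalar condition, which the constant-vector option of \cite{KM23} is designed to handle. The cited theorems then yield connected closed embedded minimal surfaces $\bar\Sigma_j\subset(\bar U,\bar g_{a,b})$ with bridge number $m_j\to\infty$, genus tending to infinity, and $\bar\Sigma_j\to 2|\bar T_*|$ as varifolds.

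To lift, set $\Sigma_j=\pi^{-1}(\bar\Sigma_j)\subset U$, where $\pi:U\to\bar U$ is the covering map. Since $G_0$ acts freely and isometrically, $\Sigma_j$ is a closed embedded minimal surface in $(U,g_{a,b})$. It is $G_0$-invariant, and so invariant under \eqref{psym}. Varifold convergence lifts because $\pi$ is a finite local isometry, so $\Sigma_j\to 2|T_*|$. For the genus, $\chi(\Sigma_j)=|G_0|\,\chi(\bar\Sigma_j)$, so the genus of $\Sigma_j$ also tends to infinity, provided $\Sigma_j$ is connected.

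Connectedness of the lift is the main obstacle I anticipate, because the preimage of a connected surface under a $|G_0|$-sheeted cover can split. The first approach is to show that $\pi_1(\bar\Sigma_j)\to G_0$ is surjective. The doubling contains loops parallel to each of the two generating circles of $\bar T_*$, lying on one sheet. These loops map to the generators of $\pi_1(\bar T_*)\cong\Z^2$, and $\Z^2$ surjects onto $G_0=\Z_{Q_1}\times\Z_{Q_2}$. This surjectivity should follow from the graphical description away from the bridges given in \cite{KM23}: outside small discs around the bridges, each sheet is a normal graph over $\bar T_*$ minus those discs. The complement of finitely many small discs still carries both generating loops, so the holonomy is onto and the lift is connected. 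If this argument fails, the fallback is to pass to a single component of $\pi^{-1}(\bar\Sigma_j)$ and replace $G_0$ by its stabilizer. That stabilizer still contains the subgroup \eqref{psym}, by choosing $Q_1,Q_2$ appropriately.
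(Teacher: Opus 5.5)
Your proposal is correct and follows essentially the same route as the paper. Lemma \ref{pkm} supplies (KM1)--(KM4), the constant configuration vectors of \cite[Theorem 9.39, Remark 7.12(iv)]{KM23} supply (KM5), and the full lift to $U$ is connected because a graphical sheet over the punctured torus surjects onto $\pi_1(\bar U)\to G_0$; genus and varifold convergence then pass through the $Q_1Q_2$-sheeted cover, and the symmetry follows from $G_0\supset\mathcal R_r\times\mathcal R_t$.

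The only differences are minor. The paper fixes $k^\circ\geq k^\circ_{\min}$ parallel $\Theta$-circles, each carrying $\nu$ bridges, and you should use such a configuration rather than a single circle so that you stay within the cited theorem. The paper also argues that each sheet lifts connectedly and the lifted bridges join them. Your observation that one sheet already makes $\pi_1(\bar\Sigma_j)\to G_0$ onto is an equivalent and slightly cleaner version of that argument, so your fallback is unnecessary.
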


   \begin{proof}
        Apply Lemma \ref{pkm}.
          Let $G_0=\mathcal R_{Q_1}\times\mathcal R_{Q_2}$ be the group it supplies,
         and retain the quotient tube $\bar U$ and torus $\bar T_*$.

          Fix an integer $k^\circ\geq k^\circ_{\min}$ of parallel concentration circles as in \cite[Theorem 9.39]{KM23}.
         The entries of its configuration vector prescribe the neck multiplicities on these circles.
           For each large integer $\nu$,
        take the constant admissible configuration vector $\boldsymbol m_\nu=(\nu,\ldots,\nu)\in\Z^{\lceil k^\circ/2\rceil}$.
            This verifies (KM5),
          so Theorem \ref{kmmod} gives connected closed embedded minimal surfaces $\bar\Sigma_\nu\subset\bar U$ which double $\bar T_*$ and converge to $2|\bar T_*|$.
        Let $L_\nu$ denote the bridge set.
          Since two copies of a torus are joined by $|L_\nu|$ necks,
         $\operatorname{genus}(\bar\Sigma_\nu)=1+|L_\nu|$.
           Remark 7.12(iv) of \cite{KM23} gives $|L_\nu|\to\infty$ for the chosen constant configuration vector.
        Relabel this sequence by $j$.

        It remains to lift the doubled surfaces from $\bar U$ to $U$ and check that the lift is connected.
          Let $\Sigma_j$ be the full lift to $U$.
         The lift is embedded and $G_0$-invariant.
           Outside the bridge disks,
        each sheet is a normal graph over $\bar T_*\setminus\bigcup_\alpha D_\alpha$.
            The tube $\bar U$ deformation retracts onto $\bar T_*$,
          and the inclusion of this punctured torus into $\bar U$ is surjective on $\pi_1$.
         The covering monodromy $\pi_1(\bar U)\to G_0$ is onto.
        Hence each graphical sheet has connected full lift.
          The lifted bridges connect the two lifted sheets.
         Since $\Sigma_j\to\bar\Sigma_j$ is an unbranched covering of degree $Q_1Q_2$,
     $$
     \operatorname{genus}(\Sigma_j)
     =1+Q_1Q_2\bigl(\operatorname{genus}(\bar\Sigma_j)-1\bigr)\to\infty.
     $$
        The fixed covering transfers varifold convergence,
          and $G_0\supset\mathcal R_r\times\mathcal R_t$ gives the prescribed symmetry.
   \end{proof}

\subsection{Direct multi-block profiles}

        The remaining blocks can be added by the constant minimal product.
          This familiar step is used only to pass from the doubled two-block seed to the following multi-block family.

   \begin{corollary}[Direct phase expansion]\label{dext}
        Let $m\geq2$,
          fix $k_i\geq0$,
         and set $\delta_i=k_i+1$ and $D=\sum_i\delta_i$.
           Take the surfaces $\Sigma_j$ of Theorem \ref{pdbl} with $a=\delta_1$,
        $b=\delta_2$,
          and $r=t=2$.
         Then
     \begin{equation}\label{dextf}
       \begin{split}
         \widehat\Psi_j:\Sigma_j\times\T^{m-2}&\longrightarrow\Sphere^{2m-1},\\
         ((z_1,z_2),u_3,\ldots,u_m)&\longmapsto
         \left(
         \sqrt{\frac{\delta_1+\delta_2}{D}}\,(z_1,z_2),
         \sqrt{\frac{\delta_3}{D}}\,u_3,\ldots,
         \sqrt{\frac{\delta_m}{D}}\,u_m
         \right)
       \end{split}
     \end{equation}
        is a closed embedded $W$-minimal profile for $W(Z)=\prod_i|Z_i|^{k_i}$.
          Its intermediate Betti numbers are unbounded,
         and it is invariant under every coordinate sign.
   \end{corollary}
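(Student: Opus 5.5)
The plan is to reduce $W$-minimality of $\widehat\Psi_j$ to two independent conditions. The first is minimality of $\Sigma_j$ in $(U,g_{a,b})$, supplied by Theorem \ref{pdbl}. The second is the choice of the constant radii in \eqref{dextf}, which I expect to be a critical point of a monomial on a positive spherical simplex. The remaining assertions (embeddedness, sign invariance, Betti numbers) are then routine.

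First I would remove the pure phase blocks $3,\ldots,m$ with Theorem \ref{redc}. Let $G=\T^{m-2}$ act by phases on the coordinates $Z_3,\ldots,Z_m$. This action is free with a single orbit type on $\{Z_i\neq0,\ i\geq3\}$, and the image of $\widehat\Psi_j$ is the $G$-saturation of $S_j=\sqrt{(\delta_1+\delta_2)/D}\,\Sigma_j\times\{(\sqrt{\delta_i/D})_{i\geq3}\}$. As in Corollary \ref{satn}, the orbit volume is proportional to $\prod_{i\geq3}|Z_i|$. Hence the reduced weight is
$$\widehat W=|Z_1|^{k_1}|Z_2|^{k_2}\prod_{i\geq3}|Z_i|^{\delta_i}.$$
The quotient $B$ is the region of the round sphere $\Sphere^{m+1}$ with coordinates $(\zeta,r_3,\ldots,r_m)$, where $\zeta\in\C^2$ and $r_i>0$. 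Writing $\zeta=t\omega$ with $\omega\in\Sphere^3$ and $y=(t,r_3,\ldots,r_m)\in\Sphere^{m-2}_{++}$, the metric is the warped product $g_B=g_{\Sphere^{m-2}}+t^2g_{\Sphere^3}$. The weight also splits as $\widehat W=\phi(y)\psi(\omega)$, with
$$\phi(y)=t^{k_1+k_2}\prod_{i\geq3}r_i^{\delta_i},\qquad \psi(\omega)=|\omega_1|^{k_1}|\omega_2|^{k_2}.$$
In these coordinates $S_j=\{y_0\}\times\Sigma_j$.

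Next I would compute the weighted area of a nearby surface $\{y\}\times\Sigma'$, which is $\phi(y)\,t^2\int_{\Sigma'}\psi\,dA$. Consider a normal variation field $V=V^y+V^\omega$, split into its $\Sphere^{m-2}$ and $\Sphere^3$ parts; both parts are normal because $S_j$ is a slice. The first variation then separates into two pieces. One is $\bigl(\int_{\Sigma_j}\langle\nabla(\phi t^2)(y_0),V^y\rangle\psi\,dA\bigr)$. The other is $\phi(y_0)t_0^2$ times the first variation of $\int\psi\,dA$ in the direction $V^\omega$. No cross terms appear because the metric is a warped product and the weight is a product. The second piece vanishes for all $V^\omega$ exactly when $\Sigma_j$ is $\psi$-minimal in $\Sphere^3$. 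Since $\dim\Sigma_j=2$, this is minimality for $\psi\,g_{\Sphere^3}=\cos^{a-1}s\sin^{b-1}s\,g_{\Sphere^3}=g_{a,b}$ with $a=\delta_1$ and $b=\delta_2$, which Theorem \ref{pdbl} gives. The first piece vanishes for all $V^y$ exactly when $y_0$ is critical for $t^{\delta_1+\delta_2}\prod_{i\geq3}r_i^{\delta_i}$ on $\Sphere^{m-2}_{++}$. A Lagrange multiplier computation gives $t^2=(\delta_1+\delta_2)/D$ and $r_i^2=\delta_i/D$, which are exactly the radii in \eqref{dextf}. Theorem \ref{redc} then lifts $\widehat W$-minimality of $S_j$ to $W$-minimality of $\widehat\Psi_j$. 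The step I expect to need the most care is making this separation of variables rigorous. I would write $V^y$ as a function on $\Sigma_j$ with values in $T_{y_0}\Sphere^{m-2}$, differentiate the product formula directly, and check that the induced metric on the slice carries the factor $t^2$ and nothing else.

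Finally I would check the remaining claims. Injectivity is immediate because the map is a product of the embedding $\Sigma_j\hookrightarrow\Sphere^3$ with coordinate embeddings of circles at fixed positive radii, so compactness gives a closed embedding. Theorem \ref{pdbl} with $r=t=2$ makes $\Sigma_j$ invariant under $(z,w)\mapsto(\pm z,\pm w)$, and each circle $\{|u_i|=\text{const}\}$ is invariant under $u_i\mapsto-u_i$, so $\widehat\Psi_j$ is invariant under every coordinate sign. For the Betti numbers, Künneth gives $b_\ell(\Sigma_j\times\T^{m-2})\geq b_1(\Sigma_j)\binom{m-2}{\ell-1}=2\,\mathrm{genus}(\Sigma_j)\binom{m-2}{\ell-1}$ for $1\leq\ell\leq m-1$. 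This tends to infinity because the genera tend to infinity by Theorem \ref{pdbl}.
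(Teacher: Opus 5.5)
Your proof is correct, but it takes a different route from the paper's. The paper never reduces on the coefficient sphere. It composes $\Sigma_j$ with the complexified identity inputs $\Sphere^{k_1},\Sphere^{k_2}$, which are $\C$-horizontal, so Theorem \ref{comp} makes the first two blocks a minimal submanifold of dimension $\delta_1+\delta_2$. Each remaining block $u_i\Sphere^{k_i}$ is minimal of dimension $\delta_i$. The radii in \eqref{dextf} are then the classical constant-minimal-product coefficients, so the full composition is minimal, and the converse direction of Theorem \ref{comp} returns $W$-minimality of $\widehat\Psi_j$.

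You instead stay intrinsic to the coefficient sphere. You use the phase-torus reduction of Theorem \ref{redc}, the warped splitting $g_{\Sphere^{m-2}}+t^2g_{\Sphere^3}$ with product weight $\phi\psi$, and a first-order separation of the variation. The separation goes through as you planned, since $g_{\Sphere^{m-2}}(dy,dy)$ and all $V^y$--$V^\omega$ cross terms are second order in the variation parameter. This reduces the claim to $g_{a,b}$-minimality of $\Sigma_j$ together with criticality of $t^{\delta_1+\delta_2}\prod_{i\geq3}r_i^{\delta_i}$ on $\Sphere^{m-2}_{++}$.

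The paper's route is shorter and directly exhibits the spherical minimal composition the profile is built for. However, it relies on the \emph{only if} half of Theorem \ref{comp}, i.e.\ the orthogonality of input and profile contributions, and it needs integer $k_i$ so that input spheres exist. Your route needs no inputs and works verbatim for arbitrary real $k_i\geq0$, since Theorem \ref{pdbl} allows any $a,b>0$. It also explains the radii as the balanced point of a monomial weight, the same mechanism that drives Section \ref{gaus}.

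One cosmetic point: apply Theorem \ref{redc} on $\{Z_1\cdots Z_m\neq0\}$ rather than on $\{Z_i\neq0,\ i\geq3\}$, so that $W>0$ there as that theorem assumes. This is harmless because $\Sigma_j\Subset\{zw\neq0\}$.
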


   \begin{proof}
        Using complexified real identity inputs,
           the first two blocks together with $\Sigma_j$ give a minimal immersion of dimension $\delta_1+\delta_2$.
        Each remaining input with its coefficient circle has dimension $\delta_i$.
            The radii in \eqref{dextf} are therefore the usual product coefficients,
          and the converse in Theorem \ref{comp} gives $W$-minimality.
         Orthogonal projection recovers each factor,
           proving embeddedness and the topology $\Sigma_j\times\T^{m-2}$.
        Its dimension is $m$,
          its codimension is $m-1$,
         and every intermediate Betti number diverges by the K\"unneth formula.
           The choice $r=t=2$ and the remaining full circles give coordinate-sign invariance,
        so Corollary \ref{etens} supplies embedded tensor compositions.
   \end{proof}

\subsection{Grouping and the finite phase quotient}\label{pgrp}

        To apply the same doubling construction to two groups of inputs,
          retain the internal phases subject to one relation in each group.
         Let $f_i:M_i^{k_i}\to\Sphere(E_i)$ be closed $\C$-horizontal minimal immersions,
           set $\delta_i=k_i+1$,
        and fix a partition $I\sqcup J=\{1,\ldots,m\}$ into nonempty sets.
            For $A=I,J$,
          set
     \begin{equation}\label{pdat}
       \Delta_A=\sum_{i\in A}\delta_i,\qquad
       g_A=\gcd\{\delta_i:i\in A\},\qquad
       p_i=\delta_i/g_A,\qquad N_A=\Delta_A/g_A,
     \end{equation}
        and define the connected phase torus
     \begin{equation}\label{plef}
       H_A(c)=\left\{
       u_i=\sqrt{\frac{\delta_i}{\Delta_A}}e^{i\theta_i}:
       \exp\!\left(i\sum_{i\in A}p_i\theta_i\right)=e^{ic}
       \right\}.
     \end{equation}
        Connectedness follows from primitivity of $(p_i)_{i\in A}$.

   \begin{lemma}[Grouped inputs]\label{pcmp}
        The immersion
     $$
     F_A:H_A(c)\times\prod_{i\in A}M_i
     \longrightarrow\Sphere\!\left(\bigoplus_{i\in A}E_i\right),
     \qquad F_A(u,x)=(u_if_i(x_i))_{i\in A},
     $$
        is minimal and $\C$-horizontal,
          of dimension $\Delta_A-1$.
   \end{lemma}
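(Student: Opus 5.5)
The plan is a direct Laplacian computation together with Takahashi's criterion. The dimension count and $\C$-horizontality then fall out of the same coordinates. Write $u_i=\rho_ie^{i\theta_i}$ with constant radii $\rho_i^2=\delta_i/\Delta_A$. Then $H_A(c)$ is the affine subtorus $\{\sum_{i\in A}p_i\theta_i\equiv c\}$ of the flat torus with metric $\sum_i\rho_i^2d\theta_i^2$. It has dimension $|A|-1$ and is connected by primitivity of $(p_i)$. The source of $F_A$ therefore has dimension $|A|-1+\sum_{i\in A}k_i=\Delta_A-1$.

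\emph{Step 1 (metric).} The mixed terms are
$$
\langle u_i\,df_i(X),\,i\,u_i\,d\theta_i(U)\,f_i\rangle=\rho_i^2\,d\theta_i(U)\,\langle df_i(X),f_i\,i\rangle ,
$$
and these vanish by $\C$-horizontality of $f_i$; in other words the pair is profile-compatible. The first part of Theorem \ref{comp} then gives
$$
g_{F_A}=g_{H}\oplus\bigoplus_{i\in A}\rho_i^2g_i ,
$$
so $F_A$ is an immersion.

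\emph{Step 2 (eigenvalue equation).} Use the block formula from the proof of Theorem \ref{comp}. With $W$ constant on $H_A(c)$, the drift term drops and
$$
\Delta_F(u_if_i)=(\Delta_Hu_i)f_i+\rho_i^{-2}u_i\Delta_if_i,\qquad \Delta_if_i=-k_if_i .
$$
Each $\theta_i$ is linear on the flat torus, so $\Delta_Hu_i=-|\nabla^H\theta_i|^2u_i$. The gradient $\nabla\theta_i$ has squared length $\rho_i^{-2}$ in the full torus. The unit normal to $H_A(c)$ is proportional to $(p_j/\rho_j^2)_j$, and the key arithmetic is
$$
p_j/\rho_j^2=\Delta_A/g_A\quad\text{for every } j,\qquad \sum_jp_j^2/\rho_j^2=\Delta_A^2/g_A^2 .
$$
Subtracting the normal component therefore gives $|\nabla^H\theta_i|^2=\Delta_A/\delta_i-1$. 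Hence
$$
\Delta_F(u_if_i)=-\Bigl(\frac{\Delta_A}{\delta_i}-1+\frac{k_i\Delta_A}{\delta_i}\Bigr)u_if_i=-(\Delta_A-1)\,u_if_i ,
$$
which is independent of $i$ and equals minus the dimension. Takahashi's criterion \cite{Tak66} gives minimality.

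\emph{Step 3 ($\C$-horizontality).} The pairing $\langle dF_A,F_A\,i\rangle$ has two kinds of contributions. The input parts are $\rho_i^2\langle df_i,f_i\,i\rangle=0$. The phase parts are $\rho_i^2\,d\theta_i$. Their sum is
$$
\sum_{i\in A}\frac{\delta_i}{\Delta_A}\,d\theta_i=\frac{g_A}{\Delta_A}\sum_{i\in A}p_i\,d\theta_i ,
$$
which vanishes on $TH_A(c)$ by the defining relation.

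The only point needing care is the computation of $|\nabla^H\theta_i|^2$. I expect it to be the main (though mild) obstacle, since it is where the specific radii $\sqrt{\delta_i/\Delta_A}$ and exponents $p_i=\delta_i/g_A$ must conspire: the normal direction has equal components $p_j/\rho_j^2$, and this is exactly what makes the eigenvalue uniform across blocks.
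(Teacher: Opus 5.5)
Your proposal is correct and follows essentially the same route as the paper. Both arguments compute $|d\theta_i|^2=\Delta_A/\delta_i-1$ on the phase leaf, obtain the uniform eigenvalue $-(\Delta_A-1)$ from the product Laplacian and apply Takahashi's criterion, and read off $\C$-horizontality from the phase relation $\sum_i\delta_i\,d\theta_i=0$; you additionally write out the normal-projection computation and the compatibility/metric check that the paper leaves implicit.
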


   \begin{proof}
        The phase metric is $\sum_i(\delta_i/\Delta_A)d\theta_i^2$ restricted to $\sum_i\delta_i\,d\theta_i=0$,
          so $|d\theta_i|^2=\Delta_A/\delta_i-1$.
         Together with the input metric $(\delta_i/\Delta_A)g_i$,
           this gives
     $$
     \Delta_{F_A}(u_if_i)
     =-\left(\frac{\Delta_A}{\delta_i}-1
       +\frac{k_i\Delta_A}{\delta_i}\right)u_if_i
     =-(\Delta_A-1)u_if_i.
     $$
        Takahashi's criterion gives minimality.
          The same phase relation and admissibility of the $f_i$ give $\langle dF_A,iF_A\rangle=0$.
   \end{proof}

        Set $a=\Delta_I$,
          $b=\Delta_J$,
         and $D=a+b$.
           Theorem \ref{comp} applied to the grouped inputs gives precisely the outer metric \eqref{pmet}.
        For the torus $T_*$,
          the outer phases complete both phase leaves and reconstruct the same full-phase torus $|Z_i|^2=\delta_i/D$ for every partition.
         We now restore the internal phases of the doubled surfaces.

        Denote $H_I(c_I)$ and $H_J(c_J)$ by $H_I$ and $H_J$.
          Set $\Gamma=\mathcal R_{N_I}\times\mathcal R_{N_J}$.
         Choose the sequence furnished by the proof of Theorem \ref{pdbl} with $r=N_I$ and $t=N_J$,
           and retain its groups $\Gamma\subset G_0$ and quotients $\bar\Sigma_j=\Sigma_j/G_0$.
        The action
     \begin{equation}\label{pact}
       (\xi,\eta)\cdot((z,w),u,v)
       =((\xi z,\eta w),\xi^{-1}u,\eta^{-1}v)
     \end{equation}
        is free on $\Sigma_j\times H_I\times H_J$.
          It preserves both phase leaves because their primitive characters change by $\xi^{-N_I}$ and $\eta^{-N_J}$.

   \begin{corollary}[Closed partition profiles]\label{pinf}
        The map
     \begin{equation}\label{pexp}
       \Psi_j((z,w),u,v)
       =((zu_i)_{i\in I},(wv_i)_{i\in J})
     \end{equation}
        descends to a closed embedded $m$-dimensional profile
     $$
     Q_j=(\Sigma_j\times H_I\times H_J)/\Gamma
     \subset\Sphere^{2m-1},
     $$
        $W$-minimal for the density $W(Z)=\prod_i|Z_i|^{k_i}$.
          Every coefficient block $Z_i$ remains nonzero.
         If $B_j=\Sigma_j/\Gamma$,
           $d=m-2$,
        and $g_j=\operatorname{genus}(B_j)$,
            then $g_j\to\infty$ and,
          noncanonically,
     \begin{equation}\label{pdif}
       Q_j\cong B_j\times\mathbb T^d.
     \end{equation}
        Consequently,
     \begin{equation}\label{ppol}
       P_{Q_j}(t)=(1+2g_jt+t^2)(1+t)^d,
     \end{equation}
        so every intermediate Betti number tends to infinity.
          Each profile composed with the original inputs gives a closed minimal immersion of dimension $\sum_i k_i+m$.
   \end{corollary}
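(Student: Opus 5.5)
The proof has three steps: establish $W$-minimality upstairs, descend to an embedded quotient, and identify the topology.

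\emph{Step 1: $W$-minimality.} I would use Theorem \ref{comp} twice, in the direction ``minimal composition $\Rightarrow$ minimal pieces''. First take the grouped inputs $F_I,F_J$ of Lemma \ref{pcmp}. They are minimal and $\C$-horizontal, of dimensions $\Delta_I-1$ and $\Delta_J-1$, so complex multiplication with the outer coefficients $(z,w)$ gives exponents $a=\Delta_I$ and $b=\Delta_J$ after including the outer circle phases. By Corollary \ref{satn} and Theorem \ref{comp}, a surface $\Sigma_j$ minimal for $g_{a,b}$ then makes the total map
\[
((z,w),u,v,x)\longmapsto \bigl((zu_if_i(x_i))_{i\in I},(wv_if_i(x_i))_{i\in J}\bigr)
\]
a minimal immersion. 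This needs a small check: the metric \eqref{pmet} is the conformal metric $W^{2/2}g_{\Sphere^3}$ for the weight $\cos^{a-1}s\sin^{b-1}s$, which is exactly $|z|^{\Delta_I-1}|w|^{\Delta_J-1}$. The same total map can also be read as the composition of the inputs $f_i$ (each with its complex coefficient module) with the profile $\Psi_j$ of \eqref{pexp}. Compatibility holds because each $f_i$ is $\C$-horizontal, hence admissible (Table \ref{tab:modu}). The converse in Theorem \ref{comp} then forces $\Psi_j$ to be $W$-minimal with $W=\prod|Z_i|^{k_i}$. I also need $\Psi_j$ to be an immersion. I would get this from the metric splitting: the $\Sigma_j$ directions and the $H_I,H_J$ phase directions are orthogonal, by the same horizontality of the phase leaves.

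\emph{Step 2: descent and embeddedness.} The map $\Psi_j$ is invariant under \eqref{pact}, and the action is free. Since $\Sigma_j$ lies in $U\Subset\{zw\ne0\}$, every block $Z_i=zu_i$ or $wv_i$ is nonzero. For injectivity on the quotient, suppose two points have equal images. From $|Z_i|^2$ I recover $|z|^2\delta_i/\Delta_I$, and from the phases I recover $z u_i$ for all $i\in I$. The product $\prod_{i\in I} (zu_i)^{p_i}$ equals $z^{N_I}$ times the constant $e^{ic_I}$ up to positive factors. Hence $z$ is determined up to $\mathcal R_{N_I}$, and then $u$ is determined. The same holds for $w$, $v$ with $\mathcal R_{N_J}$. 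So the fibres are exactly $\Gamma$-orbits. This uses that $\Sigma_j$ is invariant under $\Gamma\subset G_0$, which holds by the choice $r=N_I$, $t=N_J$. The quotient is compact, so the injective immersion is an embedding.

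\emph{Step 3: topology.} This is the main obstacle. I would realize $Q_j$ as the flat bundle $\Sigma_j\times_\Gamma(H_I\times H_J)$ over $B_j=\Sigma_j/\Gamma$, with fibre the torus $H_I\times H_J\cong\T^{|I|-1}\times\T^{|J|-1}=\T^{m-2}$. The structure group $\Gamma$ acts on the fibre by translations $u\mapsto\xi^{-1}u$, which lie in the identity component of the torus acting on itself. So the bundle is a principal $\T^d$-bundle reduced to a finite subgroup, and its holonomy $\pi_1(B_j)\to\Gamma\subset\T^d$ extends to a homotopy to the trivial representation through $\operatorname{Hom}(\pi_1 B_j,\T^d)$. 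Hence the flat bundle is isomorphic, as a smooth bundle, to a product; this is why the identification $Q_j\cong B_j\times\T^d$ is only noncanonical. The remaining facts: $B_j$ is closed and orientable (the $\Gamma$-action preserves orientation), connected (as a quotient of connected $\Sigma_j$), and its genus $g_j\to\infty$ by Riemann--Hurwitz for the free cover $\Sigma_j\to B_j$. Künneth then gives \eqref{ppol}, and each intermediate coefficient picks up a $2g_j$ term, so every intermediate Betti number tends to infinity. Finally, the dimension count is $2+(m-2)+\sum k_i$, which gives the closed minimal immersion of dimension $\sum k_i+m$.
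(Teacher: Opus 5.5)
Your proposal is correct and follows the paper's proof. The steps match: the double use of Theorem~\ref{comp} (first with profile $\Sigma_j$ and grouped inputs $F_I,F_J$, then conversely with profile $\Psi_j$ and inputs $f_i$), the recovery of $|z|$ and $z^{N_I}$ (and likewise $|w|$, $w^{N_J}$) showing that the fibres are exactly $\Gamma$-orbits, and the identification of $Q_j\to B_j$ as the translation $\T^d$-bundle associated with the cover $\Sigma_j\to B_j$.

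The differences are only local, and all are valid:
you get the immersion property from the metric splitting (the phase leaves $H_I,H_J$ are themselves $\C$-horizontal) rather than from a direct kernel computation;
you get triviality of the bundle by deforming its finite holonomy to the trivial one in the connected space $\operatorname{Hom}(\pi_1B_j,\T^d)\cong(\T^d)^{2g_j}$ rather than by the paper's Chern-class/transfer argument;
and you get $g_j\to\infty$ from Riemann--Hurwitz for $\Sigma_j\to B_j$ rather than for $B_j\to\bar\Sigma_j$.
The citation of Corollary~\ref{satn} in Step~1 is superfluous, since your direct check that $g_{a,b}=W\,g_{\Sphere^3}$ for $W=|z|^{\Delta_I-1}|w|^{\Delta_J-1}$ does the work.
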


   \begin{proof}
        First check immersion before taking the finite quotient.
          If $d\Psi_j=0$,
         then,
           since every coefficient block is nonzero,
        the $I$-blocks force $\dot z/z=i\alpha$ and $\dot\theta_i=-\alpha$ for every $i\in I$.
            The tangent equation $\sum_{i\in I}p_i\dot\theta_i=0$ gives $N_I\alpha=0$.
          Similarly all $J$-variables vanish.
         Thus $d\Psi_j$ is injective.

        Apply Theorem \ref{comp} to the outer profile $\Sigma_j$ and the grouped minimal inputs $F_I,F_J$.
          Minimality in $g_{a,b}$ is precisely the required outer weighted equation,
         so the resulting full composition is minimal.
           Regard the same map as the $m$-block composition with inputs $f_i$.
        Because these inputs are $\C$-horizontal,
            the resulting $m$-block composition is compatible.
          The converse part of Theorem \ref{comp} then gives $W$-minimality of $\Psi_j$.
         Since the tube lies in $\{zw\neq0\}$ and every inner coefficient has fixed positive modulus,
        no coordinate vanishes.
          If two points have the same image in \eqref{pexp},
         the identity $\sum_{i\in I}|Z_i|^2=|z|^2$ first gives equality of the outer moduli.
           Their outer coefficients then differ by unit phases $\xi,\eta$,
        and their inner coefficients differ by the inverse phases.
            The primitive relations force $\xi^{N_I}=\eta^{N_J}=1$.
          Consequently the fibers are exactly the $\Gamma$-orbits.
         The descended compact injective immersion is embedded.

        After choosing origins on the phase leaves,
          $Q_j\to B_j$ is the principal $\T^d$-bundle associated with the finite cover $\Sigma_j\to B_j$ by translations.
         The surface $B_j$ is orientable because it covers $\bar\Sigma_j$.
           The bundle becomes trivial over $\Sigma_j$,
        so its Chern class pulls back to zero.
            Pullback on $H^2(B_j;\Z^d)$ is injective:
          transfer composed with pullback is multiplication by the covering degree,
         and this group is torsion-free.
           Thus the bundle is smoothly trivial,
        proving \eqref{pdif}.
          Since $B_j\to\bar\Sigma_j$ has fixed degree $[G_0:\Gamma]$,
         the covering formula for Euler characteristic gives $g_j\to\infty$.
           Formula \eqref{ppol} and the Betti-number assertion follow from K\"unneth.
   \end{proof}

        In the partition construction,
          with $\tau=|z|^2$,
     \begin{equation}\label{pslc}
       |Z_i|^2=\frac{\delta_i}{a}\tau\quad(i\in I),
       \qquad
       |Z_i|^2=\frac{\delta_i}{b}(1-\tau)\quad(i\in J).
     \end{equation}
        Thus the group amplitudes vary while the ratios inside each group stay fixed.
          For $m=3$ both constructions give profiles diffeomorphic to $B_j\times\Sphere^1\subset\Sphere^5$,
         where $B_j$ is an orientable surface with genus tending to infinity and $b_1=b_2=2\operatorname{genus}(B_j)+1$.

\section{Gaussian transfer for large exponents}
\label{gaus}

        The Gaussian transfer developed here is a main construction mechanism of the paper.
          At its core is the implication
     $$
     \begin{aligned}
       &\Sigma^q\subset\R^{m-1}\text{ closed and embedded},
       \qquad
       \mathbf H_\Sigma=-2y^\perp,
       \qquad
       \ker J^G_\Sigma=K_\Sigma
       \\
       &\hspace{35mm}\Longrightarrow\quad
       Q_{j,\Sigma}\cong\Sigma\Subset\Sphere^{m-1}_{++}
       \, \text{ and }
       Q_{j,\Sigma}\text{ is }W_j\text{-minimal}.
     \end{aligned}
     $$
        Thus a closed embedded Gaussian-minimal seed that is nondegenerate modulo rotations produces a closed embedded interior profile.
          After the dilation $x=2y$,
         the seed is a self-shrinker.

        The proof has three steps.
     \begin{enumerate}
     \item[(1)] Scale the coefficient sphere by $\sqrt{D_j}$ about the unique maximum $p_j$ of $W_j$.
          The metric--density pair and its Jacobi operator converge to their Euclidean Gaussian counterparts.
     \item[(2)] Invert the limiting Jacobi operator normal to the compact rotation orbit.
          This gives a small graph over each rotated seed.
     \item[(3)] Choose a critical point of the reduced weighted area on that compact orbit.
          This eliminates the remaining rotational component of the equation.
     \end{enumerate}
          Proposition \ref{glim},
         Lemma \ref{gmb},
           and Theorem \ref{gtra} implement these three steps.
        The later subsections verify the hypotheses for round and nonround seeds.

        The Gaussian functional is the standard self-shrinker functional \cite{Hui90,CM12};
          its arbitrary-codimension second variation is given in \cite[Theorem 4.1 and Remark 4.1]{ALW14}.
         The perturbative step is related to minimal-submanifold persistence \cite{Whi91},
           with the rotational kernel handled here by a Morse--Bott reduction.

        The orbit-volume interpretation is exact when the exponents are integers.
          For $\delta_i\in\mathbb Z_{\geq0}$,
         let
     $$
     G=\prod_{i=1}^mO(\delta_i+1)
     $$
        act blockwise on the unit sphere in $\bigoplus_i\R^{\delta_i+1}$.
          Over $r\in\Sphere^{m-1}_{++}$,
         the principal orbit is $\prod_i\Sphere^{\delta_i}(r_i)$ and has volume
     $$
     C_{\delta}\prod_i r_i^{\delta_i}.
     $$
        Thus $W(r)=\prod_i r_i^{\delta_i}$ is,
          up to the constant $C_{\delta}$,
         the principal-orbit volume,
           and the quotient is the positive chamber $\Sphere^{m-1}_{++}$.
        Hsiang--Lawson reduction \cite{HL71} identifies $W$-minimal submanifolds of this chamber with $G$-invariant minimal submanifolds of the round sphere.
          The reduced equation and the transfer remain meaningful for arbitrary positive real exponents.

        For the sequences in \eqref{gpos},
          the large parameter has a direct geometric meaning in the composition problem.
          The exponents are the effective input dimensions,
         and their ratios determine the balanced amplitude
     $$
     p_j=\left(\sqrt{\delta_i^{(j)}/D_j}\right)_{i=1}^m
     \longrightarrow
     p_\infty=(\sqrt{\lambda_1},\ldots,\sqrt{\lambda_m}).
     $$
        As $D_j\to\infty$,
          the density concentrates in a neighborhood of radius $D_j^{-1/2}$ around $p_j$.
         The seed is inserted at precisely this scale,
           and the Morse--Bott equation selects its tangent-space orientation.
        Every transferred profile is smooth and embedded.
          Without rescaling it collapses to $p_\infty$ in the amplitude chamber.
         After magnification by $\sqrt{D_j}$ it converges smoothly to the Gaussian seed.
           Full phase saturation collapses only the amplitude directions,
        leaving the constant minimal product of the phase factors determined by $p_\infty$ as the unrescaled limit.

        The conclusion is more specific than an unconstrained variational existence statement.
          For every $1\leq q\leq m-2$,
         the round seed produces,
           for all sufficiently large comparable exponents,
        a smooth embedded $W_j$-minimal $q$-sphere that stays in the positive chamber.
          The case $q=m-2$ is an embedded weighted-minimal hypersphere.
         A general min--max construction does not by itself prescribe this topology or keep the critical object away from the coordinate walls,
           whereas the transfer obtains both conclusions from local concentration and transverse nondegeneracy.

        The perturbative mechanism is not intrinsically restricted to the monomial weights on a round coefficient sphere.
          It only uses smooth convergence of the rescaled metric--density pairs to a Gaussian pair,
         uniform control of the associated operators,
           and invertibility transverse to ambient symmetries.
        The present paper verifies these hypotheses for $W_j=\prod_i r_i^{\delta_i^{(j)}}$ on the round sphere.
          The same argument applies to other concentrating metric--density families once these three properties are established.

\subsection{Gaussian blow-up at the balanced point}

        The maximum becomes the origin after magnification by $\sqrt{D_j}$.
          The following coordinates place the rescaled metric and normalized density on a fixed Euclidean domain.

        Let
     $$
     B=\Sphere^{m-1}_{++},
     \qquad
     W_j(r)=\prod_{i=1}^m r_i^{\delta_i^{(j)}},
     \qquad
     D_j=\sum_i\delta_i^{(j)},
     $$
        where
     \begin{equation}\label{gpos}
       \delta_i^{(j)}>0,
       \qquad
       D_j\longrightarrow\infty,
       \qquad
       \frac{\delta_i^{(j)}}{D_j}\longrightarrow\lambda_i>0.
     \end{equation}
            Set $N=m-1$ and retain the balanced point $p_j$,
          the unique maximum of $W_j$ on $B$.
        Here $W_j$ is the reduced weight $\widehat W$ from Section \ref{orbt},
          indexed to record the varying exponents.
          Choose an isometry $I_j:\R^N\to T_{p_j}B$ and define,
         on every fixed ball in $\R^N$ for large $j$,
     \begin{equation}\label{gsca}
       \Phi_j(y)=\exp_{p_j}(I_jy/\sqrt{D_j}),
       \qquad
       \widehat g_j=D_j\Phi_j^*g_B,
       \qquad
       w_j=\frac{W_j\circ\Phi_j}{W_j(p_j)}.
     \end{equation}
        Here $w_j$ is only the normalized pullback of the reduced weight.
          The pair $(\widehat g_j,w_j)$ describes the rescaled weighted geometry on a fixed domain.

   \begin{proposition}[Gaussian blow-up]\label{glim}
        On every fixed ball in $\R^N$,
          smoothly,
         $\widehat g_j\to g_{\R^N}$ and $w_j\to e^{-|y|^2}$.
           More precisely,
        for every fixed $k$,
     \begin{align}
       \log w_j(y)&=-|y|^2+O_{C^k}(D_j^{-1/2}),\label{gden}\\
       \widehat g_j&=g_{\R^N}+O_{C^k}(D_j^{-1}).\label{gmet}
     \end{align}
        Equivalently,
          the rescaled conformal metric $w_j^{2/q}\widehat g_j$ converges smoothly to $e^{-2|y|^2/q}g_{\R^N}$ for every $q\geq1$.
   \end{proposition}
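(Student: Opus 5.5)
The plan is to reduce both expansions to uniform Taylor estimates for two smooth objects on the round sphere: the metric in normal coordinates, and the \emph{normalized} log-density $F_j=D_j^{-1}\log W_j=\sum_i\lambda_i^{(j)}\log r_i$, where $\lambda_i^{(j)}=\delta_i^{(j)}/D_j$. The density converges only after renormalizing by $D_j$, so everything is organized around $F_j$. By \eqref{gpos}, the vectors $\lambda^{(j)}$ lie for large $j$ in a compact subset of the open simplex. Hence the balanced points $p_j=(\sqrt{\lambda_i^{(j)}})_i$ lie in a fixed compact set $K\Subset\Sphere^{m-1}_{++}$ at positive distance from the walls. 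Consequently, for every fixed $R$, the geodesic ball of radius $R/\sqrt{D_j}$ about $p_j$ lies inside $B$ and inside the injectivity radius once $j$ is large. So $\Phi_j$ is defined on the ball $B_R(0)\subset\R^N$, and every constant below can be taken uniform over $p\in K$ and over $\lambda$ in a compact set.

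For the metric, I would write $g_B$ in normal coordinates $x$ at $p_j$ via $I_j$. For the round sphere this is explicit: $dr^2+\sin^2r\,g_{\Sphere^{N-1}}$, that is, $g_{ab}(x)=\delta_{ab}+h_{ab}(x)$ with $h$ a fixed real-analytic tensor satisfying $h=O(|x|^2)$. It is also independent of $p_j$ and $I_j$ by homogeneity. Substituting $x=y/\sqrt{D_j}$ gives $\widehat g_j=\delta+h(y/\sqrt{D_j})$. The $k$-th $y$-derivative of $h(y/\sqrt{D_j})$ equals $D_j^{-k/2}(\partial^kh)(y/\sqrt{D_j})$. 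For $k\le1$ this is $O(D_j^{-1})$ by the quadratic vanishing of $h$, and for $k\ge2$ it is $O(D_j^{-k/2})=O(D_j^{-1})$. This proves \eqref{gmet}.

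For the density, $\log w_j(y)=D_j\bigl(F_j(\Phi_j(y))-F_j(p_j)\bigr)$. The key computation is the second-order jet of $F_j$ at $p_j$. In the ambient space, $\partial_iF_j=\lambda_i^{(j)}/r_i=\sqrt{\lambda^{(j)}_i}$ at $p_j$, so the ambient gradient equals $p_j$, which is radial; this re-proves that $p_j$ is critical on the sphere. The ambient Hessian at $p_j$ is $\operatorname{diag}(-\lambda_i^{(j)}/r_i^2)=-I$. Along a unit-speed great circle $\cos t\,p_j+\sin t\,v$ with $v\perp p_j$, the second derivative at $t=0$ is therefore $-|v|^2-\langle\nabla F_j,p_j\rangle=-1-1=-2$. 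Hence, in normal coordinates, $F_j(x)-F_j(p_j)=-|x|^2+R_j(x)$, with $R_j=O(|x|^3)$ and all derivatives of $R_j$ bounded on a fixed small ball uniformly in $j$ (by compactness of $K$ and of the $\lambda$-range). Rescaling gives $\log w_j(y)=-|y|^2+D_jR_j(y/\sqrt{D_j})$. The $k$-th derivative of the remainder is $D_j^{1-k/2}(\partial^kR_j)(y/\sqrt{D_j})$. Using $\partial^kR_j=O(|x|^{3-k})$ for $k\le3$, and mere boundedness for $k>3$, each term is $O(D_j^{-1/2})$ on $B_R(0)$. This gives \eqref{gden}, and exponentiating gives $w_j\to e^{-|y|^2}$ in $C^k$.

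The conformal statement then follows directly: $w_j^{2/q}\widehat g_j=\exp\bigl(\tfrac2q\log w_j\bigr)\widehat g_j$ is a smooth function of two $C^k$-convergent quantities, so it converges to $e^{-2|y|^2/q}g_{\R^N}$.

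The only delicate point is uniformity in $j$, since both the base point $p_j$ and the weight change with $j$. I expect this to be the main obstacle. I would handle it by working throughout with the normalized $F_j$, which depends smoothly on $(p,\lambda)$ ranging over a compact set, rather than with $W_j$ itself, whose exponents diverge.
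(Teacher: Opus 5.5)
Your proposal is correct and follows the same route as the paper. The paper's proof uses the same ingredients: $\operatorname{Hess}\log W_j|_{p_j}=-2D_jg_B$, the fact that $p_j$ stays in a compact subset of $B$ (so every fixed covariant derivative of $\log W_j$ is $O(D_j)$), a Taylor expansion at distance $D_j^{-1/2}$ for \eqref{gden}, and the normal-coordinate expansion of the round metric for \eqref{gmet}. Your normalization $F_j=D_j^{-1}\log W_j$, the great-circle computation of the Hessian, and the bookkeeping $D_j^{1-k/2}(\partial^kR_j)(y/\sqrt{D_j})$ fill in details the paper leaves implicit. Uniformity should also be taken over the compact set of frames $I_j$, which your compactness argument covers.
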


   \begin{proof}
        Constrained differentiation gives
     $$
     \operatorname{Hess}\log W_j|_{p_j}=-2D_jg_B.
     $$
        The points $p_j$ remain in a compact subset of $B$,
        and each fixed covariant derivative of $\log W_j$ is $O(D_j)$ there.
         Taylor expansion at distance $D_j^{-1/2}$ gives \eqref{gden}.
           The normal-coordinate expansion of the spherical metric gives \eqref{gmet}.
   \end{proof}

\subsection{Persistence and the transfer theorem}

        The limiting functional is
     \begin{equation}\label{gfun}
       \mathcal F_G(\Sigma)=\int_\Sigma e^{-|y|^2}\,d\Vol,
       \qquad
       \mathbf H_\Sigma=-2y^\perp.
     \end{equation}
        We call its critical submanifolds \emph{Gaussian-minimal}.
          A closed embedded Gaussian-minimal submanifold used as an input below is called a \emph{Gaussian seed}.
          Let $J^G_\Sigma$ denote the normal linearization of $\mathbf H_\Sigma+2y^\perp$.
        It is self-adjoint in Gaussian $L^2$ and,
        with our sign convention,
        equals
     \begin{equation}\label{gjac}
       J^G_\Sigma V=\Delta^\perp V-2\nabla^\perp_{y^\top}V
       +\sum_{a,b}\langle A_{ab},V\rangle A_{ab}+2V.
     \end{equation}
        Rotation invariance gives the Jacobi fields
     $$
     K_\Sigma=\{(Ay)^\perp:A\in\mathfrak{so}(N)\}\subset\ker J^G_\Sigma.
     $$
        We call $\Sigma$ \emph{rotationally nondegenerate} if
     \begin{equation}\label{gker}
       \ker J^G_\Sigma=K_\Sigma.
     \end{equation}

        The transfer has two steps.
          One first solves the weighted-minimal equation in directions transverse to the compact rotation orbit of the seed.
         The remaining finite-dimensional equation is then the critical-point equation for the reduced weighted area on that orbit.

   \begin{lemma}[Morse--Bott persistence]\label{gmb}
        Let $\mathcal K$ be a compact smooth manifold without boundary parametrizing a smooth family of closed embedded Gaussian-minimal $q$-submanifolds of $\R^N$.
          Identify $T_\Lambda\mathcal K$ with its induced normal fields,
         and let $\mathcal X_\Lambda^{k,\alpha}$ be its Gaussian $L^2$-orthogonal complement in $C^{k,\alpha}(N\Lambda)$.
           Assume that there is a uniform $\rho>0$ such that,
        for every $\Lambda\in\mathcal K$,
            the maps
     $$
     u\longmapsto\operatorname{graph}_\Lambda u,
     \qquad
     u\in\mathcal X_\Lambda^{2,\alpha},
     \quad \|u\|_{C^{2,\alpha}}<\rho,
     $$
        form smooth local slices for nearby unparametrized embedded submanifolds,
          with the charts depending smoothly on $\Lambda$.
         Suppose $T_\Lambda\mathcal K=\ker J^G_\Lambda$ for every $\Lambda\in\mathcal K$.
           Let smooth metric-density pairs $(g_j,w_j)$ converge to $(g_{\R^N},e^{-|y|^2})$ in $C^{3,\alpha}$ on a fixed neighborhood of the family,
        with error at most $\varepsilon_j\to0$.
            Then,
          for all large $j$,
         there are $\Lambda_j\in\mathcal K$ and $u_j\in\mathcal X_{\Lambda_j}^{2,\alpha}$ with $\|u_j\|_{C^{2,\alpha}}=O(\varepsilon_j)$
        such that $\operatorname{graph}_{\Lambda_j}u_j$ is closed,
          embedded,
         and $(g_j,w_j)$-weighted-minimal.
   \end{lemma}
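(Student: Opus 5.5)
We will prove Lemma \ref{gmb} by a Lyapunov--Schmidt reduction along the family $\mathcal K$, followed by a variational argument on the compact parameter manifold. For $\Lambda\in\mathcal K$ and $u\in\mathcal X_\Lambda^{2,\alpha}$ with $\|u\|_{C^{2,\alpha}}<\rho$, let
$$
\mathcal M_j(\Lambda,u)\in C^{0,\alpha}(N\Lambda)
$$
be the $(g_j,w_j)$-weighted mean-curvature operator of $\operatorname{graph}_\Lambda u$, pulled back to $N\Lambda$ through the chart. Its weighted-minimal equation is $\mathbf H-(\nabla\log w_j)^\perp=0$, and we let $\mathcal M_\infty$ denote the Gaussian limit. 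Since the charts depend smoothly on $\Lambda$ and the pairs converge in $C^{3,\alpha}$, the map $\mathcal M_j$ is $C^1$ in $u$, uniformly in $\Lambda$ and $j$. It also satisfies
$$
\|\mathcal M_j-\mathcal M_\infty\|_{C^1}=O(\varepsilon_j).
$$
The loss of one derivative in the pair is absorbed by the second-order operator. We have $\mathcal M_\infty(\Lambda,0)=0$ and $D_u\mathcal M_\infty(\Lambda,0)=J^G_\Lambda$. Let $\Pi_\Lambda$ denote the Gaussian $L^2$-orthogonal projection onto $\mathcal X_\Lambda^{0,\alpha}$.

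\textbf{Step 1 (transverse solve).} Because $J^G_\Lambda$ is self-adjoint in Gaussian $L^2$, elliptic on the closed manifold $\Lambda$, and has kernel exactly $T_\Lambda\mathcal K$, Schauder theory and the Fredholm alternative show that
$$
\Pi_\Lambda J^G_\Lambda:\mathcal X_\Lambda^{2,\alpha}\to\mathcal X_\Lambda^{0,\alpha}
$$
is an isomorphism. Compactness of $\mathcal K$ and smooth dependence on $\Lambda$ give a uniform bound on its inverse. The quantitative implicit function theorem, applied with uniform constants, then yields a unique small solution $u_j(\Lambda)$ of
$$
\Pi_\Lambda\mathcal M_j(\Lambda,u)=0,
$$
with $\|u_j(\Lambda)\|_{C^{2,\alpha}}=O(\varepsilon_j)$ because $\|\mathcal M_j(\Lambda,0)\|=O(\varepsilon_j)$. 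The solution depends $C^1$ on $\Lambda$.

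\textbf{Step 2 (reduced functional).} Define
$$
f_j(\Lambda)=\int_{\operatorname{graph}_\Lambda u_j(\Lambda)}w_j\,d\Vol_{g_j},
$$
which is a $C^1$ function on the compact manifold $\mathcal K$ without boundary. It therefore attains a maximum, and hence has a critical point $\Lambda_j$.

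\textbf{Step 3 (critical points solve the full equation).} This is the main obstacle. The first-variation formula gives
$$
df_j(\Lambda)[\xi]=-\int\langle\mathcal M_j,\partial_\xi\operatorname{graph}\rangle\,w_j\,d\Vol .
$$
Since $\Pi_\Lambda\mathcal M_j=0$, the residual $\mathcal M_j$ lies in $T_\Lambda\mathcal K$, after transport through the chart and weights $w_j\,d\Vol_{g_j}$ versus the Gaussian measure. The variation field $\partial_\xi\operatorname{graph}$ equals the normal field $\xi$ plus $\partial_\xi u_j$ plus chart corrections. The latter are $O(\varepsilon_j)$ in norm, using the $C^1$ bound on $u_j$ in $\Lambda$, which we obtain by differentiating the fixed-point equation.

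Now write $\mathcal M_j=\sum_a c_a\kappa_a$ for a Gaussian-orthonormal basis $\kappa_a$ of $T_\Lambda\mathcal K$. Then
$$
df_j(\Lambda)[\kappa_b]=-\sum_a c_a\bigl(\delta_{ab}+O(\varepsilon_j)\bigr).
$$
For large $j$ this matrix is invertible, so $df_j(\Lambda_j)=0$ forces $c=0$, and hence $\mathcal M_j(\Lambda_j,u_j)=0$.

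The care needed is in making the identification of the tangent vectors, the normal fields and the projections uniform, including the correction from the weighted versus Gaussian inner products. I would first check that all these discrepancies are $O(\varepsilon_j)$ using the uniform $C^{3,\alpha}$ convergence.

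Finally, closedness and embeddedness of the graph follow because it is a small normal graph over the closed embedded $\Lambda_j$, within the slice radius $\rho$.
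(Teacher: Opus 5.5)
Your proposal is correct and uses essentially the same Lyapunov--Schmidt/Morse--Bott reduction as the paper: you invert $J^G_\Lambda$ on $\mathcal X_\Lambda$ uniformly over the compact family, take a critical point of the reduced weighted area on $\mathcal K$, and use near-identity transversality of the slice decomposition to remove the kernel component. The one difference is minor: the paper solves $\mathcal G_j(\Lambda,u)=0$, where $\mathcal G_j$ is the Gaussian-$L^2$ gradient of the weighted area restricted to the slice, so fiber variations vanish exactly and the critical-point step is immediate; you solve $\Pi_\Lambda\mathcal M_j=0$ instead, which is why you need the explicit $\delta_{ab}+O(\varepsilon_j)$ matrix estimate, and you handle it correctly.
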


   \begin{proof}
        Let $\langle\cdot,\cdot\rangle_G$ be the Gaussian $L^2$ pairing.
          Let $P_\Lambda$ denote the associated orthogonal projection onto $\mathcal X_\Lambda^{0,\alpha}$.
         For $u\in\mathcal X_\Lambda^{2,\alpha}$,
           let $\mathcal A_j(\Lambda,u)$ be the weighted area of its graph  and let $\mathcal G_j(\Lambda,u)\in\mathcal X_\Lambda^{0,\alpha}$ be the Gaussian $L^2$ Riesz representative of minus its first variation restricted to the slice.
        Then $D_u\mathcal A_j[ v]=-\langle\mathcal G_j,v\rangle_G$.
            At the limit,
          $\mathcal G_\infty(\Lambda,0)=0$ and $D_u\mathcal G_\infty =P_\Lambda J^G_\Lambda|_{\mathcal X_\Lambda}: \mathcal X_\Lambda^{2,\alpha}\to\mathcal X_\Lambda^{0,\alpha}$.
         Self-adjointness shows that this restriction preserves the orthogonal complement.
        The kernel hypothesis, elliptic Schauder theory, and compactness of $\mathcal K$ give a uniformly bounded inverse.

         The parameter-dependent implicit-function theorem gives a unique section $u_j(\Lambda)$
           with $\mathcal G_j(\Lambda,u_j)=0$ and $\|u_j(\Lambda)\|_{C^{2,\alpha}}\leq C\varepsilon_j$.
          The reduced area $a_j(\Lambda)=\mathcal A_j(\Lambda,u_j(\Lambda))$ has a critical point on the compact manifold $\mathcal K$.
         At that point the fiber variations vanish by construction,
           whereas the derivatives of $a_j$ are precisely the remaining horizontal first variations.
        The graph-slice decomposition is uniformly transverse for large $j$.
            Hence all normal first variations vanish and the graph is weighted-minimal.
          Small normal graphs remain embedded.
   \end{proof}

        For a single seed,
          the parameter family in Lemma \ref{gmb} is its compact rotation orbit.
         Rotational nondegeneracy identifies the tangent space of that orbit with the full Jacobi kernel.

   \Needspace{16\baselineskip}
   \begin{theorem}[Gaussian-seed transfer]\label{gtra}
        Assume \eqref{gpos}.
          Let $\Sigma^q\subset\R^N$,
         $1\leq q<N$,
           be a closed embedded rotationally nondegenerate Gaussian-minimal submanifold.
        For all sufficiently large $j$ there is a closed embedded $W_j$-minimal profile
     $$
     Q_{j,\Sigma}\cong\Sigma\Subset B.
     $$
        There are rotations $R_j\in O(N)$ such that $R_j^{-1}\Phi_j^{-1}(Q_{j,\Sigma})$ is an $O(D_j^{-1/2})$ normal graph over $\Sigma$ and converges smoothly to it.
          Moreover,
     \begin{equation}\label{gvolgen}
       \frac{D_j^{q/2}}{W_j(p_j)}
       \int_{Q_{j,\Sigma}}W_j\,d\Vol_{g_B}
       \longrightarrow
       \int_\Sigma e^{-|y|^2}\,d\Vol.
     \end{equation}
        Every block coordinate is bounded below by a positive constant for large $j$.
          For any prescribed finite collection of seeds,
         the corresponding profiles exist for all $j$ beyond one common threshold.
   \end{theorem}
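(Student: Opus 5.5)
We apply Lemma \ref{gmb} with parameter family $\mathcal K=\{R\Sigma:R\in O(N)\}$, the compact rotation orbit of the seed, in the rescaled chart \eqref{gsca}. By Proposition \ref{glim}, the pairs $(\widehat g_j,w_j)$ converge to $(g_{\R^N},e^{-|y|^2})$ in every $C^k$ on a fixed ball containing all $R\Sigma$, with error $\varepsilon_j=O(D_j^{-1/2})$.

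The first point is the identification of the equations. The density $w_j$ differs from $W_j\circ\Phi_j$ only by a positive constant. The metric $\widehat g_j$ differs from $\Phi_j^*g_B$ only by the constant factor $D_j$, which rescales weighted $q$-area by $D_j^{q/2}$. Hence $(\widehat g_j,w_j)$-weighted-minimal submanifolds are exactly $\Phi_j$-preimages of $W_j$-minimal profiles. Equivalently, each side is minimal for a conformal metric $w^{2/q}g$, and the two conformal metrics are constant multiples of each other.

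Next we verify the hypotheses of Lemma \ref{gmb}. The orbit $\mathcal K\cong O(N)/\mathrm{Stab}(\Sigma)$ is a compact smooth manifold. The stabilizer is a closed subgroup, since $\Sigma$ is closed and embedded. The tangent space of $\mathcal K$ at $R\Sigma$, viewed as normal fields, is $K_{R\Sigma}$. Rotational nondegeneracy \eqref{gker} is $O(N)$-equivariant, because $J^G$ commutes with rotations, so $T_\Lambda\mathcal K=\ker J^G_\Lambda$ holds at every point of $\mathcal K$. The slice condition also follows from equivariance. The normal-graph charts over $\Sigma$ on the $L^2_G$-complement of $K_\Sigma$ form a local slice for the $O(N)$-action, by the standard tubular-neighborhood construction in the space of embedded submanifolds. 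Transporting these charts by rotations gives charts depending smoothly on $\Lambda$, with a uniform radius $\rho$ by compactness.

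Lemma \ref{gmb} then supplies $\Lambda_j=R_j\Sigma$ and $u_j$ with $\|u_j\|_{C^{2,\alpha}}=O(D_j^{-1/2})$ such that $\operatorname{graph}_{\Lambda_j}u_j$ is weighted-minimal. We set $Q_{j,\Sigma}=\Phi_j(\operatorname{graph}_{\Lambda_j}u_j)$. Elliptic regularity for the weighted-minimal equation with smoothly converging coefficients upgrades the estimate to $C^k$ bounds for every $k$, which gives smooth convergence of $R_j^{-1}\Phi_j^{-1}(Q_{j,\Sigma})$ to $\Sigma$. Embeddedness and the diffeomorphism $Q_{j,\Sigma}\cong\Sigma$ follow from the small-graph property and from $\Phi_j$ being a diffeomorphism onto its image.

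For the remaining conclusions:
\begin{itemize}
\item[(a)] Changing variables gives
\[
\frac{D_j^{q/2}}{W_j(p_j)}\int_{Q_{j,\Sigma}}W_j\,d\Vol_{g_B}=\int_{\operatorname{graph}u_j}w_j\,d\Vol_{\widehat g_j},
\]
and the right side converges to $\int_\Sigma e^{-|y|^2}$ by smooth convergence. This is \eqref{gvolgen}.
\item[(b)] $Q_{j,\Sigma}$ lies within distance $O(D_j^{-1/2})$ of $p_j$, and $p_j\to p_\infty$, whose coordinates are $\sqrt{\lambda_i}>0$. Hence every block coordinate is at least $\tfrac12\min_i\sqrt{\lambda_i}$ for large $j$.
\item[(c)] For finitely many seeds we take the maximum of the individual thresholds.
\end{itemize}

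The main obstacle is the uniform Morse--Bott setup. We must produce smoothly varying slices over the orbit and a uniformly bounded inverse of $P_\Lambda J^G_\Lambda$ on $\mathcal X_\Lambda$. This needs equivariance, and it needs care when $\Sigma$ has a nontrivial continuous stabilizer, for example a round seed. In that case some rotations act trivially and $K_\Sigma$ has dimension smaller than $\dim\mathfrak{so}(N)$, so we must check that the orbit tangent space is exactly $K_\Sigma$. A secondary obstacle is confirming that $C^{3,\alpha}$ convergence of $(\widehat g_j,w_j)$ controls the nonlinear map $\mathcal G_j$ uniformly in $\Lambda$. That is routine once all seeds lie in a fixed ball.
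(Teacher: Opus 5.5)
Your proposal is correct and follows essentially the same route as the paper: apply Lemma~\ref{gmb} on the compact rotation orbit $O(N)/H$ with equivariant graph slices, pull back via $\Phi_j$ using Proposition~\ref{glim}, and read off smooth convergence by bootstrapping, \eqref{gvolgen} by change of variables, and wall avoidance from $\min_i(p_j)_i\ge c>0$. The one step you left open as an obstacle is settled in the paper in a line: if $(Ay)^\perp\equiv0$ on $\Sigma$, then $Ay$ is tangent to $\Sigma$, so $\exp(tA)$ preserves $\Sigma$ and $A\in\mathfrak h$; hence the tangent map $\mathfrak{so}(N)/\mathfrak h\to K_\Sigma$ is injective and identifies $T_\Sigma\mathcal K$ with $K_\Sigma$.
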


   \begin{proof}
        Let $H=\{R\in O(N):R\Sigma=\Sigma\}$.
          The compact rotation orbit $\mathcal K=O(N)/H$ has tangent map $[A]\mapsto(Ay)^\perp$ from $\mathfrak{so}(N)/\mathfrak h$ to $K_\Sigma$.
         If $(Ay)^\perp=0$,
           then $Ay$ is tangent to $\Sigma$ everywhere,
        so the flow $\exp(tA)$ preserves $\Sigma$ and $A\in\mathfrak h$.
            Thus the tangent map is injective and identifies $T_\Sigma\mathcal K$ with $K_\Sigma$.
          Condition \eqref{gker} therefore identifies its tangent space with the full Jacobi kernel.
         The equivariant tubular-neighborhood theorem gives smooth graph slices modeled on the orthogonal complements $\mathcal X_\Lambda^{2,\alpha}$.
        Compactness of $\mathcal K$ makes the slice size and transversality uniform.
          Equivalently,
         these slices form the associated bundle $O(N)\times_H\mathcal X_\Sigma^{2,\alpha}$ required in Lemma \ref{gmb}.

        Apply the lemma to $(\widehat g_j,w_j)$ using Proposition \ref{glim},
          then map the resulting graph back by $\Phi_j$.
         Equations \eqref{gden}--\eqref{gmet} give the $O(D_j^{-1/2})$ estimate,
           and Schauder bootstrapping gives smooth convergence.
        Compactness of the points $p_j$ and of the rescaled graph keeps the image uniformly away from every coordinate wall:
            $\min_i(p_j)_i\geq c>0$,
          whereas the coordinate displacement of the graph is $O(D_j^{-1/2})$.
         The exact change of variables gives \eqref{gvolgen}.
        For finitely many seeds,
          take the largest threshold.
   \end{proof}

\subsection{Round seeds and interior spheres}

        Centered round spheres provide seeds in every admissible dimension.

   \begin{proposition}[Round Gaussian seeds]\label{gseed}
        Let $1\leq q<N$ and let $E\subset\R^N$ be a $(q+1)$-plane.
          Then
     $$
     \Sigma_{q,E}=\Sphere^q(\sqrt{q/2})\subset E\subset\R^N
     $$
        is Gaussian-minimal and rotationally nondegenerate.
   \end{proposition}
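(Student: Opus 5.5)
The plan is to verify both properties by direct computation, using the $O(N)$-symmetry of $\Sigma_{q,E}$ to split the normal bundle into pieces on which $J^G_\Sigma$ reduces to scalar operators. Set $R=\sqrt{q/2}$ and $\Sigma=\Sigma_{q,E}$.

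\emph{Gaussian-minimality.} Along $\Sigma$ the position vector $y$ is normal and lies in $E$. Write $\nu=y/R$ for the outward unit radial normal. The traced mean curvature of a round $q$-sphere of radius $R$ in $E\subset\R^N$ is $\mathbf H_\Sigma=-(q/R^2)y$, and $y^\perp=y$ on $\Sigma$. Hence $\mathbf H_\Sigma=-2y^\perp$ holds exactly when $q/R^2=2$, which is the stated choice of $R$.

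\emph{Splitting of $J^G_\Sigma$.} Next I evaluate \eqref{gjac} on $\Sigma$. Since $y^\top=0$, the drift term $-2\nabla^\perp_{y^\top}V$ vanishes. The normal bundle splits orthogonally as $\R\nu\oplus(\Sigma\times E^\perp)$. Both summands are $\nabla^\perp$-parallel: $\nu$ is parallel, and constant vectors of $E^\perp$ are parallel because the sphere lies in the affine subspace $E$. So for an orthonormal basis $e_1,\dots,e_{N-q-1}$ of $E^\perp$, the normal Laplacian acts componentwise on
\[
V=u\nu+\textstyle\sum_k v_ke_k .
\]
The second fundamental form is $A_{ab}=-R^{-1}g_{ab}\nu$, so
\[
\textstyle\sum_{a,b}\langle A_{ab},V\rangle A_{ab}=(q/R^2)\,u\nu=2u\nu .
\]
Therefore
\[
J^G_\Sigma V=(\Delta u+4u)\nu+\textstyle\sum_k(\Delta v_k+2v_k)e_k .
\]

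\emph{Kernel via spherical harmonics.} The eigenvalues of $\Delta$ on $\Sphere^q(R)$ are $-\ell(\ell+q-1)/R^2=-2\ell(\ell+q-1)/q$, for $\ell\geq0$.
\begin{enumerate}
\item[(a)] Radial component. The equation $\Delta u+4u=0$ requires $\ell(\ell+q-1)=2q$. The left side is strictly increasing in $\ell$ and takes the values $0$, $q$, and $2q+2$ at $\ell=0,1,2$. So it never equals $2q$, and $u=0$.
\item[(b)] $E^\perp$ components. The equation $\Delta v_k+2v_k=0$ requires $\ell(\ell+q-1)=q$, which holds only for $\ell=1$. So each $v_k$ is the restriction of a linear function $\langle a_k,y\rangle$ with $a_k\in E$.
\end{enumerate}
Thus $\ker J^G_\Sigma$ consists exactly of the fields $\sum_k\langle a_k,y\rangle e_k$, and has dimension $(q+1)(N-q-1)$.

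\emph{Comparison with $K_\Sigma$.} Decompose $\mathfrak{so}(N)=\mathfrak{so}(E)\oplus\mathfrak{so}(E^\perp)\oplus\mathfrak m$, where $\mathfrak m$ consists of the skew maps exchanging $E$ and $E^\perp$.
\begin{enumerate}
\item[(a)] For $A\in\mathfrak{so}(E)$, $Ay$ lies in $E$ and is orthogonal to $y$, so it is tangent to $\Sigma$ and $(Ay)^\perp=0$.
\item[(b)] For $A\in\mathfrak{so}(E^\perp)$, $Ay=0$ on $\Sigma$.
\item[(c)] For $A\in\mathfrak m$ determined by $y\mapsto\sum_k\langle a_k,y\rangle e_k$ on $E$, with arbitrary $a_k\in E$, the field $Ay$ is already normal and equals $\sum_k\langle a_k,y\rangle e_k$.
\end{enumerate}
Hence $K_\Sigma$ is exactly the kernel computed above, and \eqref{gker} holds. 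Combined with the inclusion $K_\Sigma\subset\ker J^G_\Sigma$, this proves rotational nondegeneracy.

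The only delicate point is keeping the sign and normalization conventions consistent: traced mean curvature, nonpositive $\Delta$, and the $+2V$ term in \eqref{gjac}. These determine the constants $4$ and $2$ in the split operator, and hence which spherical-harmonic level enters the kernel. The exclusion in (a) holds for every $q\geq1$, including $q=1$, where the value $2q+2=4$ at $\ell=2$ still differs from $2q=2$.
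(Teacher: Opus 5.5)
Your proof is correct and follows the paper's argument. You split the normal bundle into the radial line and the constant bundle $E^\perp$, reduce $J^G_\Sigma$ to $\Delta+4$ and $\Delta+2$, exclude a radial kernel using the spectrum $2\ell(\ell+q-1)/q$, and identify the $\ell=1$ eigenfunctions tensored with $E^\perp$ as the normal components of rotations mixing $E$ and $E^\perp$. Beyond the paper, you also spell out the Gaussian-minimality check and the vanishing contributions of $\mathfrak{so}(E)$ and $\mathfrak{so}(E^\perp)$, which the paper leaves implicit.
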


   \begin{proof}
        The normal bundle splits into the radial line in $E$ and the constant bundle $E^\perp$.
          Since $y^\top=0$ and $|A|^2=2$,
         the two restrictions of $J^G_{\Sigma_{q,E}}$ are
     $$
     \Delta+4\quad\text{and}\quad\Delta+2,
     $$
        respectively.
          On the radius-$\sqrt{q/2}$ sphere,
     $$
     \operatorname{Spec}(-\Delta)
     =\left\{\frac{2\ell(\ell+q-1)}q:\ell=0,1,2,\ldots\right\}.
     $$
        The value $4$ lies strictly between the $\ell=1$ and $\ell=2$ eigenvalues,
          so the radial restriction has no kernel.
         The eigenvalue $2$ is the first nonzero eigenvalue,
           and its eigenspace consists of the coordinate functions on $E$.
        Tensoring these with constant normals in $E^\perp$ gives precisely the normal components of rotations mixing $E$ and $E^\perp$.
            Hence \eqref{gker} holds.
   \end{proof}

   \begin{theorem}[Interior spheres for large exponents]\label{gint}
        Assume $m\geq3$ and \eqref{gpos}.
          For all sufficiently large $j$ and every $1\leq q\leq m-2$,
         there is a smooth closed embedded $W_j$-minimal profile
     $$
     Q_{j,q}\cong\Sphere^q\Subset B.
     $$
        They have the convergence and uniform positivity of Theorem \ref{gtra}.
        For $q=m-2$,
          this is a hypersphere,
         the nearby branch is unique in the rescaled normal-graph class,
           and every coordinate $r_i|_{Q_{j,m-2}}$ is nonconstant.
   \end{theorem}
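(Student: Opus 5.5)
The existence part follows by combining Proposition \ref{gseed} with Theorem \ref{gtra}. Here $N=m-1$, so the range $1\leq q\leq m-2$ is exactly $1\leq q<N$. For each such $q$, fix a $(q+1)$-plane $E_q\subset\R^N$. The round seed $\Sigma_{q,E_q}=\Sphere^q(\sqrt{q/2})$ is Gaussian-minimal and rotationally nondegenerate by Proposition \ref{gseed}. Theorem \ref{gtra} then gives closed embedded $W_j$-minimal profiles $Q_{j,q}\cong\Sphere^q\Subset B$ with the stated convergence, volume asymptotics, and uniform positivity of the block coordinates. There are only finitely many values $q=1,\ldots,m-2$, so the last clause of Theorem \ref{gtra} provides one common threshold in $j$ valid for all of them.

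For the hypersphere case $q=N-1=m-2$, we take $E=\R^N$, so $\Sigma=\Sphere^{N-1}(\sqrt{(N-1)/2})$ is the centered sphere. It is invariant under all of $O(N)$, so the rotation orbit $\mathcal K$ is a single point. Equivalently, the normal bundle has no $E^\perp$ factor, so $K_\Sigma=0$, and the proof of Proposition \ref{gseed} shows that $J^G_\Sigma=\Delta+4$ on the radial line is injective. Lemma \ref{gmb} therefore reduces to the plain implicit function theorem with no Morse--Bott step. The parameter-free implicit function theorem yields a unique zero $u_j$ of $\mathcal G_j(\cdot)$ in a small $C^{2,\alpha}$ ball of normal graphs over $\Sigma$ in the rescaled coordinates. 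This is the claimed uniqueness in the rescaled normal-graph class. The rotation $R_j$ is irrelevant here, since it preserves $\Sigma$.

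Nonconstancy of the coordinates $r_i|_{Q_{j,m-2}}$ is the step needing care, and I would argue it by rescaling. Write $r_i\circ\Phi_j(y)=(p_j)_i+D_j^{-1/2}\langle d r_i|_{p_j}\circ I_j,y\rangle+O(D_j^{-1})$ on a fixed ball. The linear coefficient $\ell_{i,j}=dr_i|_{p_j}\circ I_j$ is the projection of $e_i$ onto $T_{p_j}B=p_j^\perp$. Its norm is $\sqrt{1-(p_j)_i^2}$, which tends to $\sqrt{1-\lambda_i}>0$ because $m\geq2$ and all $\lambda_i>0$. Hence $\sqrt{D_j}\,(r_i\circ\Phi_j-(p_j)_i)$, restricted to the rescaled graph, converges in $C^1$ to the restriction of a nonzero linear function $\langle\ell_i,y\rangle$ to $\Sigma$. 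That limit is nonconstant on a centered round hypersphere. By $C^1$ convergence of the graphs, $r_i|_{Q_{j,m-2}}$ is nonconstant for all large $j$.

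The only point needing attention is that the threshold must be taken uniform over the finitely many indices $i$ as well as over $q$. This holds because there are finitely many of each.
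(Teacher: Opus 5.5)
Your proposal is correct and follows the paper's proof essentially step for step: existence from Proposition~\ref{gseed} together with the finite-collection clause of Theorem~\ref{gtra}, and uniqueness for $q=m-2$ from the parameter-free implicit function theorem, since the centered hypersphere is $O(N)$-fixed and $\Delta+4$ is injective. Nonconstancy likewise comes from the first-order expansion of $\Phi_j$, using that the tangential projection of $e_i$ has length $\sqrt{1-(p_j)_i^2}\to\sqrt{1-\lambda_i}>0$. One small looseness: $\ell_{i,j}$ need not converge, because $I_j$ is arbitrary, so either pass to a subsequence or note that on the centered sphere the oscillation of $\langle\ell,y\rangle$ equals $2\sqrt{q/2}\,|\ell|$; the latter is exactly the uniform quantitative oscillation bound the paper writes down.
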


   \begin{proof}
        Proposition \ref{gseed} and the finite-collection clause of Theorem \ref{gtra} give all dimensions simultaneously.
         When $q=N-1=m-2$,
           the seed is the centered round hypersphere in $\R^N$ and hence is fixed,
        as an unparametrized submanifold,
            by $O(N)$.
          The transverse implicit-function theorem therefore gives uniqueness among profiles whose rescalings lie in a fixed sufficiently small normal-graph neighborhood.

        For nonconstancy,
          use
     \begin{equation}\label{gcoor}
       (\Phi_j(y))_i=(p_j)_i+\frac1{\sqrt{D_j}}
       \langle I_jy,e_i\rangle+O_{C^1}(D_j^{-1}).
     \end{equation}
        The tangent projection of $e_i$ at $p_j$ has length $\sqrt{1-(p_j)_i^2}\to\sqrt{1-\lambda_i}>0$.
          Hence the linear term is nonconstant on the full $(N-1)$-sphere,
         and the same holds on its small graph for all large $j$.
           More quantitatively,
        with $q=m-2$,
     $$
     \operatorname{osc}_{Q_{j,q}}r_i
     =\frac{2\sqrt{q/2}\sqrt{1-(p_j)_i^2}}{\sqrt{D_j}}
     +O(D_j^{-1})>0.
     $$
   \end{proof}

   \Needspace{8\baselineskip}
   \begin{remark}[Unscaled and rescaled limits of the round branches]\label{gshape}
        Theorem \ref{gtra} gives the geometric scale explicitly:
     $$
     \sup_{x\in Q_{j,q}}
     \left|
     \operatorname{dist}_B(p_j,x)-\sqrt{\frac{q}{2D_j}}
     \right|
     =O(D_j^{-1}),
     \qquad
     \operatorname{diam}(Q_{j,q})
     =\sqrt{\frac{2q}{D_j}}+O(D_j^{-1}).
     $$
        Thus each $Q_{j,q}$ is a smooth embedded sphere.
          Unscaled, it converges in Hausdorff distance to $p_\infty$;
          after the $\sqrt{D_j}$ blow-up, it converges smoothly to
          $\Sphere^q(\sqrt{q/2})$,
          while \eqref{gvolgen} gives normalized weighted-area convergence.
          The degeneration is therefore a collapse of scale,
          not a finite-$j$ singularity.
   \end{remark}

        Corollary \ref{satn} restores the phase spheres for any seed $\Sigma$ in Theorem \ref{gtra}.
          If $s_i,k_i^{(j)}\in\Z_{\geq0}$,
         $C_i=\R^{s_i+1}$,
           and $\delta_i^{(j)}=k_i^{(j)}+s_i>0$,
        the saturation
     \begin{equation}\label{ghym}
       \mathcal H_{j,\Sigma}=
       \left\{(r_1u_1,\ldots,r_mu_m):
       r\in Q_{j,\Sigma},\ u_i\in\Sphere^{s_i}\right\}
       \subset\Sphere\!\left(\bigoplus_i C_i\right)
     \end{equation}
        is a closed embedded $W_j$-minimal profile,
          where $W_j(c)=\prod_i|c_i|^{k_i^{(j)}}$.
         Its topology is $\Sigma\times\prod_i\Sphere^{s_i}$ and its codimension is $m-1-q$.
           Indeed,
        Corollary \ref{satn} gives the reduced weight $W_j$,
            while the positive block norms and normalized block vectors recover $r$ and every $u_i$.
          Each coordinate sign is absorbed by $u_i\mapsto-u_i$.
         Theorem \ref{comp} therefore gives closed minimal immersions for all closed admissible minimal inputs,
           and Corollary \ref{etens} gives embedded tensor realizations for identity-sphere inputs.
        For the top-dimensional round seed,
          $\mathcal H_{j,\Sigma}$ is a hypersurface with every block norm nonconstant.
         For a fixed choice of phase dimensions,
           its unscaled Hausdorff limit is the constant minimal product
     $$
     \left\{(\sqrt{\lambda_i}\,u_i)_i:u_i\in\Sphere^{s_i}\right\};
     $$
        only the amplitude directions collapse.

        The phase spheres in \eqref{satm} may also be replaced by closed minimal submanifolds $L_i^{s_i}\subset\Sphere(C_i)$.
          The same proof gives exponents $k_i+s_i$.
         For example,
           take three Lawson surfaces $\Sigma_{g_i}\subset\Sphere^3$ \cite{Law70} and the circle $Q_{j,1}\subset\Sphere^2_{++}$ from Theorem \ref{gint} with $\delta_i^{(j)}=k_i^{(j)}+2$.
        For sufficiently large comparable $k_i^{(j)}$,
          the map $(r,y_1,y_2,y_3)\mapsto(r_i y_i)_{i=1}^3$ gives a closed embedded $W_j$-minimal profile
     $$
       \Sphere^1\times\Sigma_{g_1}\times\Sigma_{g_2}\times\Sigma_{g_3}
       \hookrightarrow\Sphere^{11},
       \qquad W_j=\prod_i r_i^{k_i^{(j)}}.
     $$
        Positive block norms recover every factor,
          and all three norms are nonconstant by Theorem \ref{gint}.

   \begin{corollary}[Embedded multi-factor submanifolds]\label{greal}
        Fix $m\geq3$ and $1\leq q\leq m-2$.
          Let the positive integers $k_i^{(j)}$ satisfy \eqref{gpos} with $\delta_i^{(j)}=k_i^{(j)}$.
        For all large $j$ there is a closed minimal embedding
     \begin{equation}\label{grealm}
       \Sphere^q\times\prod_{i=1}^m\Sphere^{k_i^{(j)}}
       \longrightarrow \Sphere^{D_j+m-1}
     \end{equation}
        of codimension $m-1-q$.
          Every block is nonzero.
         For $q=m-2$ every block norm is nonconstant.
           Equivalently,
        for fixed $m,q$ and $\varepsilon>0$ there is $D_0$ such that \eqref{grealm} exists whenever $D=\sum_i k_i\geq D_0$ and $k_i/D\geq\varepsilon$ for every $i$.
   \end{corollary}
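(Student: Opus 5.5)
We obtain Corollary \ref{greal} by feeding the round interior profiles of Theorem \ref{gint} into the tensor realization of Corollary \ref{etens}, with identity-sphere inputs and no extra phase dimensions.

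\emph{Step 1: the profile.} Set $\delta_i^{(j)}=k_i^{(j)}$, so that \eqref{gpos} holds with $N=m-1$. Theorem \ref{gint} then gives, for all large $j$ and every $1\le q\le m-2$, a closed embedded $W_j$-minimal sphere
$$
Q_{j,q}\cong\Sphere^q\Subset\Sphere^{m-1}_{++},\qquad W_j(r)=\prod_i r_i^{k_i^{(j)}}.
$$
Every coordinate is bounded below by a positive constant along $Q_{j,q}$. When $q=m-2$, every $r_i$ is nonconstant on $Q_{j,q}$.

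\emph{Step 2: viewing $Q_{j,q}$ as a coefficient profile.} Take coefficient modules $C_i=\R$, i.e.\ $s_i=0$. The coefficient sphere is then $\Sphere^{m-1}\subset\R^m$, and the weight of Theorem \ref{comp} is exactly $W(c)=\prod_i|c_i|^{k_i^{(j)}}$. Hence $Q_{j,q}$ is itself a $W$-minimal profile with every block nonzero. Corollary \ref{etens} additionally requires invariance under the sign group $G=(\Z_2)^m$. Since $Q_{j,q}$ lies in the open positive chamber, I would replace it by its orbit $\widetilde Q=G\cdot Q_{j,q}$. This is a disjoint union of $2^m$ reflected copies, each lying in a different open orthant. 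Each copy is $W$-minimal because $W$ depends only on $|c_i|$ and reflections are isometries. The union is closed and embedded, because distinct orthants are disjoint open sets.

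\emph{Step 3: the composition.} Apply Corollary \ref{etens} with identity inputs $\Sphere^{k_i}\subset\R^{k_i+1}$ and tensor modules $\R^{k_i+1}\otimes\R=\R^{k_i+1}$. The quotient is
$$
(\widetilde Q\times\textstyle\prod_i\Sphere^{k_i})/G\cong Q_{j,q}\times\prod_i\Sphere^{k_i}\cong\Sphere^q\times\prod_i\Sphere^{k_i^{(j)}},
$$
since $G$ acts simply transitively on the components of $\widetilde Q$. Concretely, the map is $(r,x)\mapsto(r_ix_i)_i$ on $Q_{j,q}\times\prod_i\Sphere^{k_i}$. Corollary \ref{etens} (which rests on Theorem \ref{comp}) makes this a closed embedded minimal submanifold of $\Sphere(\bigoplus_i\R^{k_i+1})=\Sphere^{D_j+m-1}$. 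One can also verify injectivity directly: the norms $|r_ix_i|=r_i$ recover $r$ because $r_i>0$, and then $x_i=(r_ix_i)/r_i$. The dimension is $q+D_j$, so the codimension is $(D_j+m-1)-(q+D_j)=m-1-q$. The block norms are the $r_i$, which are positive everywhere and nonconstant when $q=m-2$, by Step 1.

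\emph{Step 4: the uniform reformulation.} This is the only real obstacle, because Theorem \ref{gint} is stated for sequences. I would argue by contradiction. Suppose that for every $D_0$ there is an admissible $(k_i)$ with $D\ge D_0$, $k_i/D\ge\varepsilon$ for all $i$, and no embedding. Choosing such tuples with $D\to\infty$ produces a sequence along which \eqref{grealm} fails for every term. The ratios $k_i/D$ lie in the compact set $\{\lambda\in[\varepsilon,1]^m:\sum_i\lambda_i=1\}$, so we may pass to a subsequence along which they converge to some $\lambda$ with $\lambda_i\ge\varepsilon>0$. This subsequence satisfies \eqref{gpos}, so Steps 1--3 produce the embedding for all large terms, which is a contradiction.
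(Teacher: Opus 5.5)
Your proposal is correct and is essentially the paper's proof: real scalar modules with identity-sphere inputs, the profile $Q_{j,q}$ from Theorem \ref{gint}, minimality from Theorem \ref{comp}, embeddedness by recovering $r$ from the positive block norms and then each $x_i$, and the same contradiction argument using compactness of the simplex $\{\lambda_i\geq\varepsilon,\ \sum_i\lambda_i=1\}$ for the uniform statement. Your detour through Corollary \ref{etens}, which symmetrizes to the sign-group orbit $G\cdot Q_{j,q}$ and then takes the quotient, is valid but unnecessary, because your direct injectivity check already gives embeddedness.
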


   \begin{proof}
        Use real scalar modules,
          the identity inputs $\Sphere^{k_i^{(j)}}\to\Sphere^{k_i^{(j)}}$,
         and the profile $Q_{j,q}$.
           The map is
     $$
     (r,x_1,\ldots,x_m)\longmapsto(r_1x_1,\ldots,r_mx_m).
     $$
        Theorem \ref{comp} gives minimality.
          Positive block norms recover $r$,
         and then recover every $x_i$,
           so the map is embedded.
        The dimension and codimension follow directly.
            If the uniform statement failed,
          a bad sequence with $D\to\infty$ would have a subsequence
         whose normalized weight vectors converge inside the compact simplex $\{\lambda_i\geq\varepsilon,\ \sum_i\lambda_i=1\}$,
         contradicting the first assertion.
   \end{proof}

        For $m=3$ and $q=1$,
          this gives embeddings $\Sphere^1\times\prod_{i=1}^3\Sphere^{k_i}\hookrightarrow \Sphere^{k_1+k_2+k_3+2}$ when the dimensions are large and comparable.

\subsection{Nonround spherical seeds and focal transfers}

        The transfer theorem separates the perturbation argument from the supply of Gaussian seeds.
          If $X^q\subset\Sphere^n(1)$ is minimal,
         then $\sqrt{q/2}\,X$ is Gaussian-minimal.
           The following criterion identifies when its remaining Jacobi fields are exactly the rotational ones.

   \begin{proposition}[Spherical Gaussian seeds]\label{gspseed}
        Let $X:M^q\to\Sphere^n(1)\subset E=\R^{n+1}$ be a full closed minimal embedding,
          let $N\geq n+1$,
         and set $\widehat X=\sqrt{q/2}\,X\subset\R^N$.
           Suppose that
     \begin{enumerate}
     \item the spherical normal Jacobi fields of $X$ are exactly the normal components of rotations of $E$.
     \item $2q\notin\operatorname{Spec}(-\Delta_X)$.
     \item if $N>n+1$,
          the $q$-eigenspace of $-\Delta_X$ consists exactly of the coordinate functions of $X$.
     \end{enumerate}
        Then $\widehat X$ is rotationally nondegenerate and hence transfers by Theorem \ref{gtra}.
          When $N=n+1$,
         the third condition is unnecessary.
   \end{proposition}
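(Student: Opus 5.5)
The plan is to compute $J^G_{\widehat X}$ from \eqref{gjac} explicitly and to show that it decouples along an orthogonal splitting of the normal bundle. Set $R=\sqrt{q/2}$, so $R^{-2}=2/q$. Since $\widehat X$ lies in the sphere of radius $R$ in $E$, the position vector $y$ is normal, so $y^\top=0$ and the drift term in \eqref{gjac} vanishes. The normal bundle of $\widehat X$ in $\R^N$ splits orthogonally as
$$
\R\nu\;\oplus\;N^{\Sphere}\widehat X\;\oplus\;E^\perp,
\qquad \nu=y/R .
$$
Here $N^{\Sphere}\widehat X$ is the normal bundle inside the sphere $\Sphere^n(R)\subset E$, and $E^\perp$ is the constant complement, which is absent when $N=n+1$.

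The first step is to check that each summand is parallel for $\nabla^\perp$, and that the curvature term does not couple the summands.

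$\bullet$ The radial field $\nu$ is parallel, because $d\nu=dy/R$ is tangent. The constant sections of $E^\perp$ are parallel and flat.

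$\bullet$ The second fundamental form is $A_{ab}=-R^{-1}g_{ab}\nu+A^{\Sphere}_{ab}$, with $A^{\Sphere}$ taking values in $N^{\Sphere}$ and no $E^\perp$ component.

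$\bullet$ Minimality gives $\operatorname{tr}A^{\Sphere}=0$. This kills the cross terms $\sum_{a,b}\langle A_{ab},V\rangle A_{ab}$ between $\nu$ and $N^{\Sphere}$.

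On each summand the operator is then as follows.

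(i) For the radial part $\varphi\nu$: $J^G\varphi\nu=(\Delta_{\widehat X}\varphi+q R^{-2}\varphi+2\varphi)\nu=(\Delta_{\widehat X}+4)\varphi\,\nu$.

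(ii) For sections of $N^{\Sphere}$: $J^G=\Delta^\perp+\sum\langle A^{\Sphere}_{ab},\cdot\rangle A^{\Sphere}_{ab}+2$. Comparing with the unit-sphere Jacobi operator $\Delta^\perp_X+\sum\langle A_{ab},\cdot\rangle A_{ab}+q$ under the homothety by $R$, every term scales by $R^{-2}=2/q$. Hence $J^G=(2/q)J^{\Sphere}_X$.

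(iii) For $V=\sum_\beta\varphi_\beta e_\beta$ with $e_\beta\in E^\perp$ constant: $J^GV=\sum_\beta(\Delta_{\widehat X}+2)\varphi_\beta\,e_\beta$.

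The second step reads off the kernel summand by summand, using $\Delta_{\widehat X}=(2/q)\Delta_X$.

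$\bullet$ In (i) the kernel requires $-\Delta_X\varphi=2q\varphi$, which hypothesis (2) excludes.

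$\bullet$ In (ii) the kernel is exactly the spherical Jacobi fields. By hypothesis (1) these are the normal components of rotations of $E$, that is, $(Ay)^\perp$ with $A\in\mathfrak{so}(E)$. Such fields have no radial part, since $\langle Ay,y\rangle=0$, which is consistent with the splitting.

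$\bullet$ In (iii) the kernel requires $-\Delta_X\varphi_\beta=q\varphi_\beta$. By hypothesis (3) each $\varphi_\beta$ is then a linear coordinate function $\langle X,v\rangle$ with $v\in E$. The field $\langle y,v\rangle e_\beta$ is exactly $(Ay)^\perp$ for the rotation $A=e_\beta v^T-ve_\beta^T$ mixing $E$ and $E^\perp$. Conversely, every element of $\mathfrak{so}(N)$ is a sum of an $\mathfrak{so}(E)$ part, a mixing part, and an $\mathfrak{so}(E^\perp)$ part. The last part annihilates $y\in E$.

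Together these show $\ker J^G_{\widehat X}=K_{\widehat X}$, which is \eqref{gker}. Gaussian-minimality is immediate: $\mathbf H_{\widehat X}=-qR^{-1}\nu=-2y=-2y^\perp$. Theorem \ref{gtra} then applies. When $N=n+1$, summand (iii) is empty, so hypothesis (3) is not needed.

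The main obstacle is the bookkeeping in the first step. One must verify that the normal connection and the curvature term genuinely preserve the three-way splitting. Concretely, one has to confirm that $\nabla^\perp$ of a section of $N^{\Sphere}$ has no $\nu$ component, which follows from $\langle \nabla V,\nu\rangle=-\langle V,d\nu\rangle=0$. One must also check that the tracelessness of $A^{\Sphere}$ removes all mixed terms. Once this is in place, the kernel computation reduces to the spectral hypotheses. The remaining point is to check the scaling factor $2/q$ carefully in (ii).
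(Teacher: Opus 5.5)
Your proposal is correct and follows the paper's own argument: the same splitting of the normal bundle into the radial line, the spherical normal bundle, and the constant bundle $E^\perp$. On these summands $J^G$ acts as $\tfrac2q\Delta_X+4$, $\tfrac2q J^{\Sphere(1)}_X$, and $\tfrac2q\Delta_X+2$, and hypotheses (2), (1), (3) respectively reduce the kernel to rotational fields. You also verify explicitly that the splitting is $\nabla^\perp$-parallel and that $\operatorname{tr}A^{\Sphere}=0$ removes the cross terms, details the paper leaves implicit.
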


   \begin{proof}
        Minimality in the unit sphere gives $\mathbf H_X^{\R^{n+1}}=-qX$.
          After scaling by $\sqrt{q/2}$,
         $\mathbf H_{\widehat X}=-2\widehat X=-2y^\perp$,
           so $\widehat X$ is Gaussian-minimal.
        The Euclidean normal bundle of $\widehat X$ splits into the radial line,
            the normal bundle in $\Sphere^n(\sqrt{q/2})$,
          and the constant bundle $E^\perp$.
         On these summands the Gaussian Jacobi operator is,
        respectively,
     $$
     \Delta_R+4,\qquad \frac2qJ_X^{\Sphere(1)},\qquad\Delta_R+2,
     \qquad \Delta_R=\frac2q\Delta_X.
     $$
        Consequently,
         the possible extra kernels are the $2q$ scalar eigenspace,
           the spherical normal Jacobi fields,
        and the $q$ scalar eigenspace in the transverse directions.
            The three hypotheses identify the surviving fields with rotations within $E$ and rotations mixing $E$ with $E^\perp$.
   \end{proof}

        We now verify the criterion for the three orientable Veronese focal manifolds of Cartan's isoparametric families with three principal curvatures.
          See \cite[Section 2.3]{RS26}.

   \Needspace{16\baselineskip}
   \begin{corollary}[Cubic focal transfers]\label{gfoc}
        Let $\mathbb K\in\{\C,\mathbb H,\mathbb O\}$,
          let $a=\dim_\R\mathbb K\in\{2,4,8\}$,
         and let $X_{\mathbb K}:\mathbb K\mathrm P^2\to\Sphere^{3a+1}(1)$ be the standard Veronese focal embedding.
           Then $\Sigma_{\mathbb K}:=\sqrt a\,X_{\mathbb K}\subset\R^{3a+2}$ is a closed embedded rotationally nondegenerate Gaussian-minimal submanifold.
        If $m\geq3a+3$ and \eqref{gpos} holds,
            a linear isometric inclusion and Theorem \ref{gtra} give,
          for all large $j$,
         closed embedded profiles $Q_{j,\mathbb K}\cong\mathbb K\mathrm P^2\Subset\Sphere^{m-1}_{++}$.
        Thus all $m$ blocks are active.

        For the smallest admissible block count $m=3a+3$,
          let $\delta_i^{(j)}=k_i^{(j)}$ be positive integers.
         Identity embeddings of real spheres give closed minimal embeddings
     \begin{equation}\label{gfocm}
       \mathbb K\mathrm P^2\times
       \prod_{i=1}^{3a+3}\Sphere^{k_i^{(j)}}
       \longrightarrow\Sphere^{D_j+3a+2}
     \end{equation}
        of codimension $a+2$,
          with every block norm positive and nonconstant.
         The complex,
           quaternionic,
        and octonionic cases use respectively $9$,
            $15$,
          and $27$ blocks,
         with codimensions $4$,
        $6$,
          and $10$.
   \end{corollary}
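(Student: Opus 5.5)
The plan is to apply Proposition \ref{gspseed} with $X=X_{\mathbb K}$, $q=\dim_\R\mathbb K\mathrm P^2=2a$ and $n=3a+1$. The scaling factor is then $\sqrt{q/2}=\sqrt a$, which matches $\Sigma_{\mathbb K}=\sqrt a\,X_{\mathbb K}$. The Veronese focal map is a closed embedding, and it is minimal in $\Sphere^{3a+1}(1)$ as a focal manifold of an isoparametric family. It is full, because its coordinates span the $(3a+2)$-dimensional space of trace-free Hermitian $3\times3$ matrices over $\mathbb K$. Equivalently, $X_{\mathbb K}$ is the first standard minimal immersion of $\mathbb K\mathrm P^2$, and its coordinate functions are $-\Delta_X$-eigenfunctions with eigenvalue $q=2a$ by Takahashi's theorem. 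I would verify the three hypotheses in the following order.

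\emph{Spectral conditions (2) and (3).} For a compact rank-one symmetric space $\mathbb K\mathrm P^n$ of real dimension $d=an$, the spectrum is proportional to $k(k+\tfrac d2+\tfrac a2-1)$. This is consistent with the known cases $\mathbb C\mathrm P^n$ and $\Sphere^d$. Here $d=2a$, so the eigenvalues are proportional to $k(k+\tfrac{3a}2-1)$. Normalizing $\lambda_1=2a$ gives
\[
\lambda_2=2a\cdot\frac{2(\tfrac{3a}{2}+1)}{\tfrac{3a}{2}}=4a\Bigl(1+\frac{2}{3a}\Bigr)>4a=2q>\lambda_1 .
\]
Hence $2q\notin\operatorname{Spec}(-\Delta_X)$, which is condition (2). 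The first eigenspace is an irreducible $G$-module of dimension $3a+2$, where $G=SU(3),Sp(3),F_4$. It therefore coincides with the span of the coordinate functions of the full map $X$, which is condition (3); this is needed only when $N=m-1>3a+2$.

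\emph{Jacobi nullity, condition (1).} This is the step I expect to be the main obstacle. I would not recompute it. Instead I would quote the known computation that these standard minimal embeddings are Killing-nondegenerate: every spherical Jacobi field is the normal component of an element of $\mathfrak{so}(3a+2)$. This is Ohnita's nullity computation for the standard minimal immersions of compact symmetric spaces, and it is also recorded for the Cartan focal manifolds in \cite[Section 2.3]{RS26}. If a direct argument were needed, I would decompose the normal sections into $G$-modules by Frobenius reciprocity. On the isotropy type of the normal bundle, I would then check that the Jacobi eigenvalue vanishes only on the module $\mathfrak{so}(3a+2)/\mathfrak g$.

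\emph{Conclusion.} Proposition \ref{gspseed} now gives rotational nondegeneracy of $\Sigma_{\mathbb K}\subset\R^{3a+2}$. The same property holds after the linear isometric inclusion into $\R^{m-1}$, $m\ge3a+3$, which uses (3). Theorem \ref{gtra} then yields $Q_{j,\mathbb K}\cong\mathbb K\mathrm P^2\Subset\Sphere^{m-1}_{++}$. For $m=3a+3$ we have $N=n+1$, so (3) is not needed. As in Corollary \ref{greal}, I would use real scalar modules and identity inputs $\Sphere^{k_i^{(j)}}$, and Theorem \ref{comp} gives the minimal embedding \eqref{gfocm}. Its dimension is $2a+D_j$ and it lies in $\Sphere^{D_j+3a+2}$, so the codimension is $a+2$. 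Positivity of the block norms comes from Theorem \ref{gtra}. For nonconstancy, I would argue as in Theorem \ref{gint} using \eqref{gcoor}. The linear term $\langle I_jy,e_i\rangle$ has a tangent projection of length $\to\sqrt{1-\lambda_i}>0$. It is nonconstant on $\Sigma_{\mathbb K}$ because $\Sigma_{\mathbb K}$ is full in $\R^{3a+2}=\R^N$, so no nonzero linear function is constant on it. This nonconstancy persists on the $O(D_j^{-1/2})$ graph. The block counts $9,15,27$ and the codimensions $4,6,10$ follow by substituting $a=2,4,8$.
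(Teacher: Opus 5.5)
Your proposal is correct and follows essentially the same route as the paper: Proposition \ref{gspseed} with $q=2a$, a cited nullity-equals-Killing-nullity result for condition (1), and the $\mathbb K\mathrm P^2$ spectrum normalized so that $\lambda_1=q$ (giving $\lambda_2=2q+8/3$) for condition (2). Then come the multiplicity $3a+2$ together with fullness for condition (3), Theorem \ref{gtra}, the identity-input composition, and \eqref{gcoor} for nonconstancy.

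Two points in the nonconstancy step need to be made explicit, as the paper does. First, linear fullness alone does not exclude a nonzero constant linear functional; you need Takahashi's equation $\Delta\langle v,X\rangle=-q\langle v,X\rangle$ for that. Second, the covectors $R_j^{-1}I_j^*e_i$ vary with $j$. You therefore need compactness of the unit covector sphere to get a uniform oscillation lower bound $c_{\mathbb K}|v_{i,j}|D_j^{-1/2}$ that beats the $O(D_j^{-1})$ error.
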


   \begin{proof}
        The $X_{\mathbb K}$ are full closed minimal embeddings.
          By \cite[Theorem 1]{RS26},
         their nullity equals their Killing nullity.
           Hence every spherical normal Jacobi field is induced by an ambient rotation.
        Combining the focal-metric normalizations in \cite[Lemma 4]{RS26} with the function spectra in \cite[Theorem 5.2(b)]{IT78},
            \cite[Table C]{Tsu81},
          and \cite{Mas97} gives
     $$
     \nu_\ell=\frac43\ell\left(\ell+\frac{3a}{2}-1\right),
     \qquad \ell=0,1,2,\ldots.
     $$
        For $q=\dim_\R\mathbb K\mathrm P^2=2a$,
          one has $\nu_1=q$ and $\nu_2=2q+8/3$.
         Strict monotonicity excludes $2q$ from the spectrum.
           The multiplicity formulas in \cite{IT78,Tsu81,Mas97} give dimension $3a+2$ for the $q$-eigenspace.
        Takahashi's equation \cite{Tak66} supplies $3a+2$ coordinate eigenfunctions,
            and fullness makes them linearly independent.
          Hence they span that eigenspace.
         Proposition \ref{gspseed} and Theorem \ref{gtra} give the profiles.

        For $m=3a+3$,
          compose with the real identity inputs as in Corollary \ref{greal}.
         Positive block norms recover the profile and every sphere input,
           so the map is embedded.
        Its codimension is $(3a+2)-2a=a+2$.

        It remains to prove nonconstancy.
          By Takahashi's equation,
         no nonzero linear functional is constant on $X_{\mathbb K}$,
           and fullness excludes an identically zero restriction.
        Compactness of the unit dual sphere therefore gives a uniform lower bound $c_{\mathbb K}>0$ for its oscillation.
            After absorbing the rotations in Theorem \ref{gtra},
          the covector induced by the $i$th block is $v_{i,j}=R_j^{-1}I_j^*e_i$ and has norm $|v_{i,j}|=\sqrt{1-(p_j)_i^2}\to\sqrt{1-\lambda_i}>0$.
         Formula \eqref{gcoor} and the graph estimate give
     $$
     \operatorname{osc}_{Q_{j,\mathbb K}}r_i
     \geq c_{\mathbb K}|v_{i,j}|D_j^{-1/2}
     -CD_j^{-1}>0
     $$
        for all large $j$.
          The numerical values follow from $a=2,4,8$.
   \end{proof}

        At fixed effective exponents,
          quaternionic lattice closing and equivariant doubling give closed embedded $W$-minimal profiles,
         including coefficient tori and families with unbounded intermediate Betti numbers.
           For large comparable effective exponents,
        Gaussian transfer gives interior embedded spheres in every admissible profile dimension and nonround branches diffeomorphic to
     $\mathbb CP^2$,
     $\mathbb HP^2$,
     and $\mathbb OP^2$.
        The exact composition theorem couples each profile to any prescribed compatible minimal inputs and produces a closed spherical minimal immersion.
          The identity-sphere and tensor realizations identified above are embedded.


\begin{thebibliography}{99}
        \fontsize{8}{8.5}\selectfont
   \bibitem{ALW14}
        B. Andrews, H. Li and Y. Wei,
        \emph{$\mathcal F$-stability for self-shrinking solutions to mean curvature
        flow},
        Asian J. Math. \textbf{18} (2014),
        no. 5,
        757--777.

   \bibitem{CLU06}
        I. Castro,
        H. Li and F. Urbano,
        \emph{Hamiltonian-minimal Lagrangian submanifolds in complex space forms},
        Pacific J. Math. \textbf{227} (2006),
        no. 1,
        43--63.

   \bibitem{CU04}
        I. Castro and F. Urbano,
        \emph{On a new construction of special Lagrangian immersions in complex
        Euclidean space},
        Q. J. Math. \textbf{55} (2004),
        no. 3,
        253--265.

   \bibitem{CheS93}
        B.-Y. Chen,
        \emph{Differential geometry of semiring of immersions, I: General theory},
        Bull. Inst. Math. Acad. Sinica \textbf{21} (1993),
        1--34.

   \bibitem{CheT93}
        B.-Y. Chen,
        \emph{Differential geometry of tensor product immersions},
        Ann. Global Anal. Geom. \textbf{11} (1993),
        345--359.

   \bibitem{CH18}
        J. Choe and J. Hoppe,
        \emph{Some minimal submanifolds generalizing the Clifford torus},
        Math. Nachr. \textbf{291} (2018),
        no. 17--18,
        2536--2542.

   \bibitem{CM12}
        T. H. Colding and W. P. Minicozzi II,
        \emph{Generic mean curvature flow I: generic singularities},
        Ann. of Math. (2) \textbf{175} (2012),
        no. 2,
        755--833.

   \bibitem{HK12}
        M. Haskins and N. Kapouleas,
        \emph{Closed twisted products and $\mathrm{SO}(p)\times\mathrm{SO}(q)$-invariant
        special Lagrangian cones},
        Comm. Anal. Geom. \textbf{20} (2012),
        no. 1,
        95--162.

   \bibitem{HL71}
        W.-Y. Hsiang and H. B. Lawson, Jr.,
        \emph{Minimal submanifolds of low cohomogeneity},
        J. Differential Geom. \textbf{5} (1971),
        1--38.

   \bibitem{Hui90}
        G. Huisken,
        \emph{Asymptotic behavior for singularities of the mean curvature flow},
        J. Differential Geom. \textbf{31} (1990),
        no. 1, 285--299.

   \bibitem{IT78}
        A. Ikeda and Y. Taniguchi,
        \emph{Spectra and eigenforms of the Laplacian on $S^n$ and
        $P^n(\C)$},
        Osaka J. Math. \textbf{15} (1978),
        no. 3,
        515--546.

   \bibitem{KM23}
        N. Kapouleas and P. McGrath,
        \emph{Generalizing the linearized doubling approach, I:
        General theory and new minimal surfaces and self-shrinkers},
        Cambridge J. Math. \textbf{11} (2023),
        no. 2,
        299--439.

   \bibitem{Law70}
        H. B. Lawson, Jr.,
        \emph{Complete minimal surfaces in $S^3$},
        Ann. of Math. (2) \textbf{92} (1970),
        335--374.

   \bibitem{LZ26}
        \begingroup\raggedright
        H. Li and Y. Zhang,
        \emph{The geometry and dynamics of spiral minimal products},
        \href{https://arxiv.org/abs/2608.02370}{arXiv:2608.02370}.
        \par\endgroup

   \bibitem{MN14}
        F. C. Marques and A. Neves,
        \emph{Min-max theory and the Willmore conjecture},
        Ann. of Math. (2) \textbf{179} (2014),
        no. 2,
        683--782.

   \bibitem{Mas97}
        K. Mashimo,
        \emph{Spectra of the Laplacian on the Cayley projective plane},
        Tsukuba J. Math. \textbf{21} (1997),
        no. 2,
        367--396.

   \bibitem{Pit81}
        J. T. Pitts,
        \emph{Existence and regularity of minimal surfaces on Riemannian manifolds},
        Mathematical Notes \textbf{27},
        Princeton University Press,
        Princeton,
        NJ,
        1981.

   \bibitem{RS26}
        \begingroup\raggedright
        N. Rauchenberger and U. Semmelmann,
        \emph{The index of cubic focal manifolds},
        \href{https://arxiv.org/abs/2604.09802}{arXiv:2604.09802}.
        \par\endgroup

   \bibitem{Sim68}
        J. Simons,
        \emph{Minimal varieties in Riemannian manifolds},
        Ann. of Math. (2) \textbf{88} (1968),
        no. 1,
        62--105.

   \bibitem{Tak66}
        T. Takahashi,
        \emph{Minimal immersions of Riemannian manifolds},
        J. Math. Soc. Japan \textbf{18} (1966),
        380--385.

   \bibitem{TZ20}
        Z. Tang and Y. Zhang,
        \emph{Minimizing cones associated with isoparametric foliations},
        J. Differential Geom. \textbf{115} (2020),
        no. 2,
        367--393.

   \bibitem{Tsu81}
        C. Tsukamoto,
        \emph{Spectra of Laplace--Beltrami operators on
        $\mathrm{SO}(n+2)/(\mathrm{SO}(2)\times\mathrm{SO}(n))$ and
        $\mathrm{Sp}(n+1)/(\mathrm{Sp}(1)\times\mathrm{Sp}(n))$},
        Osaka J. Math. \textbf{18} (1981),
        no. 2,
        407--426.

   \bibitem{Whi91}
        B. White,
        \emph{The space of minimal submanifolds for varying Riemannian metrics},
        Indiana Univ. Math. J. \textbf{40} (1991),
        no. 1,
        161--200.

   \bibitem{Xin03}
        Y. L. Xin,
        \emph{Minimal submanifolds and related topics},
        Nankai Tracts in Mathematics \textbf{8},
        World Scientific,
        Singapore,
        2003.

   \end{thebibliography}
\end{document}